\def\le{\leqslant}
\def\ge{\geqslant}
\def\leq{\leqslant}
\def\geq{\geqslant}
\newcommand\Var{\operatorname{\mathsf{V}}}
\newcommand\Li{\operatorname{Li}}
\newtheorem{theorem}{Theorem}
\newtheorem{lemma}[theorem]{Lemma}
\newtheorem{cor}[theorem]{Corollary}
\newtheorem{defin}[theorem]{Definition}
\newtheorem{conjecture}[theorem]{Conjecture}
\numberwithin{equation}{section}
\numberwithin{theorem}{section}
\newcommand{\abs}[1]{\left| #1\right|}
\newcommand{\conjugate}[1]{\overline{#1}}
\newcommand{\mirror}[1]{\overleftarrow{#1}}
\newcommand{\dv}{\mid}
\newcommand{\notdv}{\nmid}
\newcommand{\bfell}{{\boldsymbol{\ell}}}
\newcommand{\R}{\mathbb{R}}
\def\\{\cr}
\def\({\left(}
\def\){\right)}
\def\[{\left[}
\def\]{\right]}
\def\<{\langle}
\def\>{\rangle}
\def\le{\leqslant}
\def\ge{\geqslant}
\def\eps{\varepsilon}
\def\mand{\qquad\mbox{and}\qquad}
\newcommand{\pr}[1]{\left( #1\right)}
\newcommand{\norm}[1]{\left\| #1 \right\|}
\newcommand{\e}[1]{\operatorname{e}\pr{ #1}}
\newcommand{\SQF}{Q}
\newcommand\fD{\mathfrak{D}}
\newcommand\fH{\mathfrak{H}}
\newcommand{\p}{\mathfrak{p}}
\def\cB{{\mathcal B}}
\def\cD{{\mathcal D}}
\def\cS{{\mathcal S}}
\def\Z{\mathbb{Z}}
\def\N{\mathbb{N}}
\def\Set{\cB_n}
\newcommand{\tautilde}{\widetilde{\tau}}
\def\vec#1{\mathbf{#1}}
\def\vd{\vec{d}}
\begin{document}

\title{Reversible primes} 
\author[\tiny{C. Dartyge}]{C\'ecile Dartyge}
\author[\tiny{B. Martin}]{Bruno Martin}
\author[\tiny{J. Rivat}]{Jo\"el Rivat}
\author[\tiny{I. Shparlinski}]{Igor E. Shparlinski}
\author[\tiny{C. Swaenepoel}]{Cathy Swaenepoel}

\address{
  C\'ecile Dartyge,
  Institut Élie Cartan, Universit\'e de Lorraine,
  BP 70239, 54506 Vandœuvre-lès-Nancy Cedex, France
}
\email{cecile.dartyge@univ-lorraine.fr}

\address{
Bruno Martin,
Univ. Littoral C\^ote d'Opale, 
EA 2797 -- LMPA -- Laboratoire de Math\'ematiques pures et appliqu\'ees Joseph Liouville,
F-62228 Calais, 
France. 
}
\email{Bruno.Martin@univ-littoral.fr}

\address{
  Jo\"el Rivat,
  Universit\'e d'Aix-Marseille, Institut Universitaire de France,
  Institut de Math\'ematiques de Marseille CNRS UMR 7373,
  163 avenue de Luminy, Case 907, 13288 Marseille Cedex 9, France.
}
\email{joel.rivat@univ-amu.fr}

\address{
  Igor E. Shparlinski,
  School of Mathematics and Statistics,
  University of New South Wales, Sydney, NSW, 2052, Australia
}
\email{igor.shparlinski@unsw.edu.au}

\address{
  Cathy Swaenepoel,
  Universit\'e Paris Cit\'e and Sorbonne Universit\'e,
  CNRS, IMJ-PRG, F-75013 Paris, France.
}
\email{cathy.swaenepoel@u-paris.fr}

\begin{abstract}
  For an $n$-bit positive integer $a$ written in binary as
$$
a = \sum_{j=0}^{n-1} \varepsilon_{j}(a) \,2^j
$$
where, $\varepsilon_j(a) \in \{0,1\}$, $j\in\{0, \ldots, n-1\}$,
$\varepsilon_{n-1}(a)=1$, let us define
$$
\mirror{a} = \sum_{j=0}^{n-1} \eps_j(a)\,2^{n-1-j},
$$  
the digital reversal of $a$. Also let
$\Set = \{2^{n-1}\leq a<2^n:~a \text{ odd}\}.$ With a sieve
argument, we obtain an upper bound of the expected order of magnitude
for the number of $p \in \Set$ such that $p$ and $\mirror{p}$ are
prime.  We also prove that for sufficiently large $n$,
$$
\abs{\{a \in \Set:~ \max \{\Omega (a), \Omega (\mirror{a})\}\le 8 \}}
\ge c\, \frac{2^n}{n^2},
$$
where $\Omega(n)$ denotes the number of prime factors counted with
multiplicity of $n$ and $c > 0$ is an absolute constant. Finally, we
provide an asymptotic formula for the number of $n$-bit integers $a$
such that $a$ and $\mirror{a}$ are both squarefree.  Our method leads
us to provide various estimates for the exponential sum
$$
\sum_{a \in \Set} \exp\(2\pi i (\alpha a + \vartheta \mirror{a})\)
\quad(\alpha,\vartheta \in\mathbb{R}).   
$$   
\end{abstract}

\keywords{prime numbers, squarefree numbers, reversed binary
  expansions, digits} \subjclass[2020]{11A63, 11N05, 11N36}

\maketitle

\tableofcontents 

\section*{Notations}

Throughout the paper, the notations $U = O(V)$, $U\ll V$ and $V\gg U$
all mean that there is an absolute constant $C>0$ such that
$\abs{U} \leq C V$. If the implicit constant $C$ is allowed to depend
on a parameter $\alpha$ then this dependence is indicated by writing
$U = O_{\alpha}(V)$, $U\ll_{\alpha} V$ or $V \gg_{\alpha} U$.  We also
write $U\asymp V$ if $U \ll V \ll U$ and similarly for
$U\asymp_{\alpha} V$.

For a real number $A> 0$, we write $a \sim A$ to denote $a \in [A,2A)$. 

For a finite set $\cS$ we use $\abs{\cS}$ to denote its cardinality.   

We use $\mu(d)$, $\tau(d)$, $\omega(d)$ and $\Omega (d)$ to denote the
M{\"o}bius function, number of positive divisors, number of distinct
prime factors and the number of prime factors counted with
multiplicity of an integer $d\ge 1$.  We denote by $P^{-}(d)$ and
$P^{+}(d)$ the smallest and the largest prime factors of an integer
$d\ge 2$, respectively.

For a real number $x$ we also set
$$
\e{x} = \exp(2\pi i x) \mand \norm{x} = \min\{|x-k|:~k \in \Z\}. 
$$

For a certain property $\mathsf{P}$, we define
$\mathbf{1}_{\mathsf{P}}$ by $\mathbf{1}_{\mathsf{P}}=1$ if
$\mathsf{P}$ is satisfied and $0$ otherwise.

The letter $p$, with or without indices, always denotes a prime
number.

\section{Introduction}

\subsection{Motivation and set-up} 

Since recently, a large body of research has appeared on arithmetic
properties of integers with various digits restrictions in a given
integer base. For example, this includes the work of Mauduit and
Rivat~\cite{MaRi2} on the sum of digits of primes, the work of
Bourgain~\cite{Bou1, Bou2} and Swaenepoel~\cite{Swa20} on primes with
prescribed digits on a positive proportion of positions in their
digital expansion, and the results of Maynard~\cite{May19,May22} on
primes with missing digits, see
also~\cite{Bug1,BK,col-2009-ellipsephic, DartygeMauduit2000,
  DartygeMauduit2001, DES,DMR,Kar22,Naslund, Pratt} and references
therein for a series of other results about primes and other
interesting integers with various digit restrictions.  In this
direction, polynomial values with digital restrictions have been
studied by Mauduit and Rivat~\cite{MaRi1}, Maynard~\cite{May22} and
very recently by Spiegelhofer~\cite{Spiegelhofer}, see
also~\cite{DMR2011, Stoll, DartygeTenenbaum2006}.

In the present paper, we are interested in a question, which
apparently has never been studied theoretically.  Let $b\ge 2$ an
integer.  For a positive integer~$k$ written in a base $b$ as
$$
  k = \sum_{j=0}^{n-1} \varepsilon_{j}(k) \,b^j
$$
where, $\varepsilon_j(k) \in \{0,\ldots,b-1\}$ for
$j\in\{0, \ldots, n-1\}$ and $\varepsilon_{n-1}(k)\neq 0$, we define
$$
\mirror{k} = \sum_{j=0}^{n-1} \eps_j(k)\,b^{n-1-j}
$$   
as the ``reverse'' of $k$ in base $b$ (throughout the paper, we make
sure that there is no ambiguity on the base $b$).
It is certainly interesting to
understand whether there is any correlation between arithmetic
properties of $k$ and $\mirror{k}$. For instance, a natural question
would be to evaluate the number of $n$-digits integers~$a$ such that
$a$ and $\mirror{a}$ belong to a given set $\mathcal{S}$ defined by a
multiplicative property.

We are especially interested in primality of both $k$ and
$\mirror{k}$.  The prime numbers $p$ such that $\mirror{p}$ is also a
prime number are called {\it reversible primes\/}. Sometimes they are
also referred as ``emirps'', ``reversal primes'' or ``mirror
primes''. The first reversible primes in bases 2 and 10 can be found
in~\cite[\href{https://oeis.org/A074832}{A074832}]{OEIS}
and~\cite[\href{https://oeis.org/A007500}{A007500}]{OEIS},
respectively.

Remarkable examples of reversible primes are of course palindromic
primes, that is, primes $p$ such that $p=\mirror{p}$. Unlike
reversible primes, the distribution of palindromic primes has already
been deeply investigated. Let us denote by $\mathcal{P}_b(x)$ the set
of palindromes less than $x$ in base $b$.  Improving on results by
Banks, Hart and Sakata~\cite{Banks-Hart-Sakata},
Col~\cite{col-2009-palindromes} has obtained an upper bound of the
right order of magnitude for the number of palindromic primes in every
base $b\ge 2$: for $x\ge 2$, we have
$$
\abs{ \{p \in  \mathcal{P}_b(x)  \} }
\ll_b \frac{\abs{ \mathcal{P}_b(x)}}{\log x}. 
$$
Col~\cite{col-2009-palindromes} has also proved that for all $b\ge 2$,
there exists $\kappa_b\ge 1$ such that for a sufficiently large $x$
(depending only on $b$)
$$
\abs{ \{k \in  \mathcal{P}_b(x):~ \Omega(k)\le \kappa_b  \} }
\gg_b \frac{\abs{ \mathcal{P}_b(x)}}{\log x}
$$
and he computed some admissible values of $\kappa_b$.  In particular,
he showed that there are infinitely many binary palindromes $k$ such
that $\Omega(k)\le 60$.  We also mention that Irving~\cite{Irving} has
proved that, for sufficiently large $b$, there exists a 3-digits
palindrome in base $b$ with exactly $2$ prime factors and Banks and
Shparlinski~\cite{Banks-Shparlinski} showed that in any base $b\ge 2$,
for sufficiently large $n$, there exists a $n$-digit palindrome $k$
such that $\omega(k) \ge (\log \log k)^{1+o(1)}$ and a $n$-digit
palindrome $m$ such that $P^{+}(m)\ge (\log m)^{2+o(1)}.$

In this paper, in order to emphasize our main ideas to handle
reversible primes, we choose to concentrate on the emblematic case of
binary expansions. So from now on, we consider $n$-bit integers $k$
such that
$$
k = \sum_{j=0}^{n-1} \varepsilon_{j}(k) \,2^j,
\quad
\mirror{k} = \sum_{j=0}^{n-1} \eps_j(k)\,2^{n-1-j}
$$
with $\varepsilon_j(k) \in \{0,1\}$, $j\in\{0, \ldots, n-1\}$,
$ \varepsilon_{n-1}(k) = 1$.  The sieve method developed by Col in
\cite{col-2009-palindromes} is not suitable to study reversible
primes.  Instead, we develop a two-dimensional sieve approach that
enables us to obtain an upper bound of the expected order of magnitude
for the number of reversible primes and to prove that there are
infinitely many reversible almost primes.  Furthermore, we are able to
get an asymptotic formula for the number of reversible squarefree
integers.

\subsection{Main results}

\subsubsection{Reversible primes}

The $n$-bit prime numbers $p$ such that $\mirror{p}$ is also prime
must satisfy $\eps_0(p)=\eps_{n-1}(p)=1$, which implies that
$p\in \Set$ where
$$
\Set  = \{2^{n-1}\leq a<2^n:~a \text{ odd}\}
$$
is the set of $n$-bit odd integers. Clearly
$$
\abs{\Set} = 2^{n-2}.
$$
We also note that if $a \in \Set$, then
\begin{equation}
  \label{congr_mod_3}
  \mirror{a} \equiv (-1)^{n-1} a \bmod 3,
\end{equation}
so that $3\mid a$ if and
only if $3 \mid \mirror{a}$.

We denote by $\varTheta(n)$ the number of $n$-bit reversible primes:
$$
\varTheta(n) = \abs{\{ 2^{n-1} \leq p < 2^n : ~p \text{ and } \mirror{p} \text{
    are prime} \}}.
$$
It is certainly natural to expect that   
\begin{equation}
  \label{eq:asymp-Theta-c}
  \varTheta(n) = \(c+o(1)\) \frac{2^n}{n^2} \qquad (n \to \infty),
\end{equation}
for some absolute constant $c > 0$. In Section~\ref{sec_count_emirps},
we present numerical investigations and a heuristic argument that
permit us to formulate a conjecture regarding the value of $c$
in~\eqref{eq:asymp-Theta-c}.  We are not able to obtain such an
asymptotic formula but we obtain an upper bound on $\varTheta(n)$ of
the expected order of magnitude. To achieve this, we use a sieve
method based on the following trivial inequality. For any real number
$z \leq 2^{n-1}$ we have
\begin{equation}\label{eq:Theta Thetaz}
  \varTheta(n) \le   \varTheta(n,z),
\end{equation}  
where
$$
  \varTheta(n,z) = |\{a \in \Set:~p \dv a\mirror{a} \Rightarrow p\geq
z\}|.
$$

We use the two-dimensional combinatorial sieve described
in~\cite[p.~308-310]{diamond_halberstam_richert_1988}, with the
associated constant
$$
\beta_2 = 4.2664...
$$ 
from \cite[Appendix~III]{diamond_halberstam_richert_1988}, to
establish the following matching upper and lower bounds on
$\varTheta(n,z)$.

\begin{theorem}\label{thm:Omega-z}
  Let $0<\gamma < 1/(2\beta_2)$. There exists $n_0 \geq 1$, which
  depends only on $\gamma$, such that for $n\geq n_0$, we have
  $$
  \varTheta\left(n,2^{\gamma n} \right)
  \asymp_{\gamma} \frac{2^{n}}{n^2}.
  $$ 
\end{theorem}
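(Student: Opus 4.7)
The plan is to apply the two-dimensional combinatorial sieve of \cite{diamond_halberstam_richert_1988} to the sequence $\cA=(a\mirror{a})_{a\in\Set}$, sifted by primes $p<z$ with $z=2^{\gamma n}$.

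First, I would identify the sieve dimension. Writing $\omega(d)/d$ for the density of $a\in\Set$ with $d\mid a\mirror{a}$, for each prime $p>3$ inclusion--exclusion on $\{p\mid a\}\cup\{p\mid\mirror{a}\}$ yields $\omega(p)=2+O(1/p)$, so the sifting dimension is $\kappa=2$. The prime $p=2$ is harmless since every $a\in\Set$ and its reverse are odd, while $p=3$ contributes $\omega(3)=1$ instead of approximately~$2$ owing to the congruence \eqref{congr_mod_3}; this affects only the absolute constant in the sieve output.

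The core analytic step is to establish a joint level of distribution for pairs $(d_1,d_2)$: for some $\eta>0$ to be taken as close to~$1$ as possible and every $A>0$,
$$
\sum_{d_1 d_2\le 2^{\eta n}}\mu^{2}(d_1 d_2)\,\abs{R(d_1,d_2)}\ll_A \abs{\Set}\,n^{-A},
$$
where $R(d_1,d_2):=\abs{\{a\in\Set:d_1\mid a,\;d_2\mid\mirror{a}\}}-\abs{\Set}/(d_1 d_2)$ and $d_1,d_2$ are coprime squarefree. Orthogonality converts each $R(d_1,d_2)$ into a sum of values of the exponential sum
$$
S(\alpha,\vartheta):=\sum_{a\in\Set}\e{\alpha a+\vartheta\mirror{a}}
$$
at non-zero rational frequencies $(\alpha,\vartheta)=(h_1/d_1,h_2/d_2)$, so the required cancellation will follow from the bounds on $S(\alpha,\vartheta)$ announced in the abstract, combined with standard large-sieve manipulations. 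Expanding $d\mid a\mirror{a}$ by inclusion--exclusion, the single-divisor remainder $r_d:=\abs{\cA_d}-\tfrac{\omega(d)}{d}\abs{\Set}$ is thereby controlled on average for $d\le D=2^{\eta n/2}$, the factor $1/2$ reflecting the bound $d_1 d_2\le d^2$ in the expansion.

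With $\kappa=2$ and sieve level $D=2^{\eta n/2}$, the Diamond--Halberstam--Richert combinatorial sieve produces matching upper and lower bounds
$$
\varTheta(n,z)\asymp_{\gamma}\abs{\Set}\prod_{p<z}\pr{1-\tfrac{\omega(p)}{p}}\asymp_{\gamma}\frac{2^{n}}{n^{2}}
$$
as long as $\log D/\log z>\beta_2$, i.e.\ $\gamma<\eta/(2\beta_2)$; sending $\eta\to 1^{-}$ covers the full range $\gamma<1/(2\beta_2)$ required by the theorem. The main obstacle is the joint level-of-distribution step: while $d_1\mid a$ is a standard arithmetic-progression constraint, $d_2\mid\mirror{a}$ is a highly non-algebraic digital constraint on the binary expansion of $a$, so the two are nearly independent in an analytically awkward way. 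Obtaining simultaneous cancellation in both frequency slots of $S(\alpha,\vartheta)$, uniformly over rational pairs $(h_1/d_1,h_2/d_2)$ with denominators up to roughly $2^{n/2}$, demands a careful digital Fourier analysis of the map $a\mapsto\mirror{a}$---of the type developed in the study of primes with digital restrictions---now deployed in both frequencies at once.
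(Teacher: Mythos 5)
Your sieve framework coincides with the paper's: the two--dimensional Diamond--Halberstam--Richert sieve applied to $(a\mirror{a})_{a\in\Set}$ with dimension $\kappa=2$, the special treatment of $p=2,3$, and the condition $\log D/\log z>\beta_2$, so that a level of distribution just below $2^{n/2}$ for the sieve modulus gives exactly the range $\gamma<1/(2\beta_2)$; this is precisely how the paper deduces the theorem in Section~\ref{completion_proofs}. However, the entire content of the theorem lies in the level-of-distribution estimate, and there your proposal has a genuine gap: you assert the required cancellation ``will follow from the bounds on $S(\alpha,\vartheta)$ announced in the abstract, combined with standard large-sieve manipulations''. In the paper this is Lemma~\ref{lemma:sieve-error-term}, whose proof occupies Sections~\ref{sec_F}--\ref{sec:proof_lemma_1} and requires much more than the large sieve: the product and splitting formulas for $F_n$, the $L^1$ estimate for $G_N^{2/3}$ with $C(2/3)<1$, a bounded-variation version of the Sobolev--Gallagher inequality, and the gcd decomposition together with the complete sums $H(d,h_1,h_2)$.

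Moreover, the intermediate statement you propose to prove is not the right target. First, restricting to coprime pairs $(d_1,d_2)$ does not cover the expansion of $d\mid a\mirror{a}$: for squarefree $d$ the inclusion--exclusion produces conditions $d_1d_3\mid a$ and $d_2d_3\mid\mirror{a}$ whose moduli share the factor $d_3$ (the terms ``$p\mid a$ and $p\mid\mirror{a}$'' are unavoidable); the paper handles these via $d=d_0d_1d_2d_3$ and $H(d,h_1,h_2)$. Second, and more seriously, a joint level $d_1d_2\le 2^{\eta n}$ with $\eta\to 1$ for coprime moduli overshoots what the method can deliver: because the two frequencies are entangled through $a\mapsto\mirror{a}$, one must split $n=n_1+n_2$ and spend $F_{n_1}$ on one frequency family and $F_{n_2}$ on the other before applying the large sieve and Sobolev--Gallagher, which caps the product of the coprime parts of the two moduli at roughly $2^{n/2}$ --- this is exactly the constraint~\eqref{eq:condD1D2D3} in Lemma~\ref{lem:maj_R_D_l}. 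The point your accounting misses is that the pairs with product beyond $2^{n/2}$ arising from $d\mid a\mirror{a}$ are precisely those with a large common factor, and the quantity that must stay below the level is $d=e_1e_2/\gcd(e_1,e_2)$ itself; hence a level $2^{\xi n}$ in the single modulus $d$ for every $\xi<1/2$ (which is what Lemma~\ref{lemma:sieve-error-term} provides) already yields the full range $\gamma<1/(2\beta_2)$, and no ``$\eta\to1$'' joint product level is needed, nor is one available by these techniques. A further, minor, point: the DHR remainder carries the weights $4^{\omega(d)}$; your unweighted $n^{-A}$ saving could be upgraded by Cauchy--Schwarz and a trivial averaged bound, but note that the paper proves the weighted bound directly, with the stronger saving $\exp(-c\sqrt{n})$.
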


Hence, we immediately derive from~\eqref{eq:Theta Thetaz} and
Theorem~\ref{thm:Omega-z} an upper bound on $\varTheta(n)$ of the
expected order of magnitude.

\begin{cor}
  For any integer $n\geq 1$, we have
  $$
    \varTheta(n)  \ll  \frac{2^{n}}{n^2}.
  $$
\end{cor}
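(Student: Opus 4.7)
The corollary is an immediate consequence of Theorem~\ref{thm:Omega-z} together with the sieve inequality~\eqref{eq:Theta Thetaz}, so the plan is essentially a matter of choosing parameters. I would fix once and for all some value $\gamma \in (0, 1/(2\beta_2))$, say $\gamma = 1/(4\beta_2)$, so that Theorem~\ref{thm:Omega-z} applies and yields
$$
\varTheta\bigl(n, 2^{\gamma n}\bigr) \ll \frac{2^n}{n^2}
$$
for all $n \geq n_0$, where $n_0$ depends only on our fixed $\gamma$ and hence becomes an absolute constant.

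Next I would verify that the sieve bound~\eqref{eq:Theta Thetaz} can be invoked with the choice $z = 2^{\gamma n}$. This requires $2^{\gamma n} \leq 2^{n-1}$, i.e.\ $\gamma n \leq n-1$, which holds for every $n$ larger than some absolute threshold since $\gamma < 1$. For such $n$ the inequality $\varTheta(n) \leq \varTheta(n, 2^{\gamma n})$ combined with the bound above gives the desired estimate $\varTheta(n) \ll 2^n/n^2$.

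Finally, for the finitely many small values of $n$ not covered by the two thresholds, the bound $\varTheta(n) \ll 2^n/n^2$ is trivial: $\varTheta(n) \leq |\Set| = 2^{n-2}$, and the factor $n^{-2}$ can be absorbed into the absolute implied constant. There is no real obstacle here — the only thing to check is that $\gamma$ can be chosen strictly less than both $1/(2\beta_2)$ and $1$ simultaneously, which is clear since $\beta_2 > 1/2$.
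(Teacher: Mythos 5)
Your argument is correct and is exactly the paper's: fix an admissible $\gamma$, apply the upper bound of Theorem~\ref{thm:Omega-z} together with the trivial inequality~\eqref{eq:Theta Thetaz} with $z=2^{\gamma n}$, and absorb the finitely many small $n$ into the implied constant. Nothing further is needed.
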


Another direct consequence of Theorem~\ref{thm:Omega-z} is the
existence of infinitely many almost prime numbers whose reverse is
also almost prime.  If a $n$-bit integer $a$ is such that all prime
factors of $a\mirror{a}$ are bigger than $2^{\gamma n}$ then
$\max\{\Omega (a), \Omega ( \mirror{a})\}\le \lfloor
1/\gamma\rfloor$. Our limit for the choice of $1/\gamma$ is
$2\beta_2=8.53...$.

\begin{cor}\label{cor:almostprimes}
  There exists $n_0\geq 1$ such that for any integer $n\geq n_0$, we
  have
  $$
  |\{ a  \in \Set : \max \{\Omega (a), \Omega (\mirror{a})\}\le 8\}|
  \gg
    \frac{2^n}{n^2}.
  $$
\end{cor}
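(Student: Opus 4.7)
The plan is to derive Corollary~\ref{cor:almostprimes} directly from Theorem~\ref{thm:Omega-z} by choosing the sieving threshold $\gamma$ so that the surviving integers $a \in \Set$ are automatically forced to have at most $8$ prime factors, counted with multiplicity.

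First I would record the elementary arithmetic implication: if $a \in \Set$ satisfies the condition that every prime $p$ dividing $a\mirror{a}$ fulfils $p \ge 2^{\gamma n}$, then every prime factor of $a$ is at least $2^{\gamma n}$, and since $a < 2^n$, this forces
$$
\Omega(a) \le \frac{\log a}{\log 2^{\gamma n}} < \frac{1}{\gamma},
$$
and similarly $\Omega(\mirror{a}) < 1/\gamma$, because $\mirror{a} < 2^n$ as well. Consequently,
$$
\max\{\Omega(a),\, \Omega(\mirror{a})\} \le \left\lfloor \frac{1}{\gamma} \right\rfloor
$$
for every $a$ counted by $\varTheta(n, 2^{\gamma n})$.

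Next I would choose $\gamma$ so that $\lfloor 1/\gamma \rfloor \le 8$, i.e., $\gamma > 1/9$, while simultaneously respecting the admissibility constraint $\gamma < 1/(2\beta_2)$ required by Theorem~\ref{thm:Omega-z}. Since $1/9 = 0.1111\ldots < 0.1172\ldots = 1/(2\beta_2)$, the interval $(1/9,\, 1/(2\beta_2))$ is non-empty, and any $\gamma$ in it will do. Fixing such a $\gamma$, Theorem~\ref{thm:Omega-z} yields, for all sufficiently large $n$, the lower bound $\varTheta(n, 2^{\gamma n}) \gg 2^n / n^2$. Combined with the observation above, this immediately gives
$$
\abs{\{a \in \Set:~ \max\{\Omega(a), \Omega(\mirror{a})\} \le 8\}} \ge \varTheta(n, 2^{\gamma n}) \gg \frac{2^n}{n^2},
$$
which is the claim.

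Since this deduction uses Theorem~\ref{thm:Omega-z} as a black box, there is essentially no analytic obstacle at this last stage: all the real work, in particular the two-dimensional sieve machinery and the Diamond--Halberstam--Richert constant $\beta_2$, is already packaged inside Theorem~\ref{thm:Omega-z}. The only delicate point is numerical: $2\beta_2 = 8.53\ldots$ just barely exceeds $8$, which is precisely what permits a threshold of $8$ (but not $7$) prime factors on both $a$ and $\mirror{a}$ simultaneously. Any improvement in the sieve constant would translate directly into a smaller admissible $\max\Omega$ here.
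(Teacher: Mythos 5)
Your proposal is correct and matches the paper's own deduction: the corollary is obtained exactly as you describe, as a direct consequence of Theorem~\ref{thm:Omega-z} by picking $\gamma$ with $2\beta_2 < 1/\gamma < 9$ (equivalently $\gamma \in (1/9, 1/(2\beta_2))$), so that every $a$ counted by $\varTheta(n,2^{\gamma n})$ satisfies $\max\{\Omega(a),\Omega(\mirror{a})\} \le \lfloor 1/\gamma\rfloor = 8$. The only tiny slip is in your closing remark: what permits the threshold $8$ is that $2\beta_2 < 9$, while the fact that $2\beta_2 > 8$ is what rules out $7$; this does not affect the proof.
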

The lower bound in Corollary~\ref{cor:almostprimes} is not of the
expected order of magnitude. A power of $\log n$ is missing. This is
due to the fact that the almost primes $a$ and $\mirror{a}$ detected
in Corollary~\ref{cor:almostprimes} are without small prime factors.

\subsubsection{Reversible squarefree integers}

It is also interesting to consider simultaneously squarefree values of
$a$ and $\mirror{a}$.  Thus, we define
$$
\SQF (n)=\abs{\{ a\in  \Set :~\mu ^2(a)=\mu^2(\mirror{a})=1\}}
$$ 
as the cardinality of the set of the $a\in \Set$ such that $a$ and its
reverse $\mirror{a}$ are both squarefree.  This is related to the
sequence~\cite[\href{https://oeis.org/A077337}{A077337}]{OEIS} (in
base $b=10$).  In this case, we are able to obtain an asymptotic
formula for $\SQF(n)$ which matches the following heuristic.

If we choose $a\in \Set $ at random then the probability that
$9 \notdv a$ and $9 \notdv \mirror{a}$ is
\begin{align*}
  \mathbb{P}(9 \notdv a \mbox{ and } 9 \notdv \mirror{a})
  &= 1 - \mathbb{P}(9 \dv a \mbox{ or } 9 \dv \mirror{a})\\
  &= 1 - \mathbb{P}(9 \dv a) - \mathbb{P}(9 \dv \mirror{a})
    + \mathbb{P}(9 \dv a \mbox{ and } 9 \dv \mirror{a}).
\end{align*}
If $3 \dv a$ then $3 \dv \mirror{a}$. Therefore, if $9 \dv a$ then
$\mirror{a}\equiv 0, 3$ or $6 \bmod 9$. Hence we may expect that
$$
  \mathbb{P}(9 \dv a \mbox{ and } 9 \dv \mirror{a}) \approx
  \frac{1}{9} \cdot \frac{1}{3} = \frac{1}{27}
$$
and
$$
  \mathbb{P}(9 \notdv a \mbox{ and } 9 \notdv \mirror{a}) \approx 1-
  \frac{1}{9} -\frac{1}{9} + \frac{1}{27} = \frac{22}{27}.
$$
Moreover, if $a \in \Set $ then $a$ and $\mirror{a}$ are both odd.
Thus we may expect that
\begin{align*}
  \frac{1}{|\Set|} \SQF(n)
  &\approx \mathbb{P}(9 \notdv a \mbox{ and } 9 \notdv \mirror{a})
    \prod_{p\geq 5} \mathbb{P}(p^2 \notdv a \mbox{ and } p^2 \notdv \mirror{a})\\
  &\approx \frac{22}{27}\, \prod_{p\geq 5} \left(1-\frac{1}{p^2}\right)^2
    =  \frac{22}{27}\,\left(\frac{4}{3}\right)^2
    \left(\frac{9}{8}\right)^2 \frac{1}{\zeta(2)^2}\\
  &  = \frac{11}{6}\,\frac{1}{\zeta(2)^2}
    = \frac{66}{\pi^4}.
\end{align*}
Therefore we expect that
$$
  \SQF(n)= |\Set | \left(\frac{66}{\pi^4} + o(1)\right),
$$
which we prove in a quantitative way.

We also define 
$$
 \widetilde \SQF(n) = \abs{\{2^{n-1}\leq a < 2^n :~\mu^2(a)=\mu^2(\mirror{a})=1\}}.
$$

\begin{theorem}\label{thm-squarefree-v2}
  There exists an absolute constant $c>0$
  such that for any $n\geq 1$, we have
  \begin{equation}
    \label{asymp_squarefree_Bn}
    \SQF(n)= |\Set | \left(\frac{66}{\pi^4} + O(\exp (-c\sqrt{n}))\right)
  \end{equation}
  and
  \begin{equation}
    \begin{split} 
    \label{asymp_squarefree}
  \widetilde \SQF(n)  =2^{n-1}\left(\frac{99}{2\pi^4} + O(\exp (-c\sqrt{n}))\right). 
        \end{split}
  \end{equation}
\end{theorem}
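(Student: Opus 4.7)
My plan is to first establish~\eqref{asymp_squarefree_Bn} by a Möbius expansion whose error is controlled via the exponential sums $S_n(\alpha,\vartheta) = \sum_{a\in\Set}\e{\alpha a + \vartheta\mirror{a}}$ introduced in the abstract, and then to derive~\eqref{asymp_squarefree} from~\eqref{asymp_squarefree_Bn} by splitting on the parity of $a$. Writing $\mu^2(m) = \sum_{d^2\mid m}\mu(d)$ for both $m = a$ and $m = \mirror{a}$ (both odd, since $a\in\Set$ and the leading bit of $a$ equals the lowest bit of $\mirror{a}$), I obtain
$$
\SQF(n) = \sum_{\substack{d_1,d_2\geq 1\\ d_1,d_2\text{ odd}}} \mu(d_1)\mu(d_2)\,A(d_1,d_2), \quad A(d_1,d_2) := \abs{\{a\in\Set:\ d_1^2\mid a,\ d_2^2\mid\mirror{a}\}},
$$
and truncate at $D = \exp(c_0\sqrt n)$. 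The tail with $\max(d_1,d_2)>D$ contributes $O(|\Set|/D)$ by exchanging the sums and using $|\{a\le 2^n:\ d^2\mid a\}|\le 2^n/d^2$.

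For the surviving pairs $d_1,d_2\le D$ I apply orthogonality to write
$$
A(d_1,d_2) = \frac{1}{d_1^2 d_2^2}\sum_{h_1\bmod d_1^2}\sum_{h_2\bmod d_2^2} S_n\pr{\frac{h_1}{d_1^2},\frac{h_2}{d_2^2}},
$$
and separate the $(h_1,h_2)$-sum into a \emph{local} part (pairs in which both reduced fractions $h_i/d_i^2$ have denominator supported only on the prime $3$) and an \emph{error} part. The local part combines to give $|\Set|\,\rho(d_1,d_2)$, where $\rho$ is multiplicative in the sense that $\rho(d_1,d_2) = \prod_p \rho_p(\mathbf{1}_{p\mid d_1},\mathbf{1}_{p\mid d_2})$ with $\rho_p(\varepsilon,\delta) = p^{-2(\varepsilon+\delta)}$ for $p\geq 5$ while $\rho_3(1,1) = 1/27$ (not $1/81$) and $\rho_3(1,0) = \rho_3(0,1) = 1/9$, encoding the correlation $\mirror{a}\equiv(-1)^{n-1}a\bmod 3$ from~\eqref{congr_mod_3}. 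The error part is controlled by the exponential sum bounds for $S_n$ proved earlier in the paper. Inserting into the Möbius sum and letting the density factor over odd primes yields
$$
\sum_{d_1,d_2\text{ odd}}\mu(d_1)\mu(d_2)\rho(d_1,d_2) = \tfrac{22}{27}\prod_{p\geq 5}\pr{1-p^{-2}}^2 = \tfrac{22}{27}\cdot\tfrac{81}{\pi^4} = \tfrac{66}{\pi^4},
$$
matching the heuristic preceding the theorem and establishing~\eqref{asymp_squarefree_Bn}.

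To obtain~\eqref{asymp_squarefree}, I split $\widetilde\SQF(n)$ by parity of $a$. Odd $a\in[2^{n-1},2^n)$ lie in $\Set$ and contribute $\SQF(n)$. If $a$ is even with $\mu^2(a)=1$, then $4\nmid a$, so $a = 2a'$ with $a'$ odd and $a'\in\cB_{n-1}$; a direct binary digit inspection shows $\mirror{a}=\mirror{a'}$ (reverse of $a'$ as an $(n-1)$-bit integer) and, since $\gcd(a',2)=1$, $\mu^2(a)=\mu^2(a')$. Hence $\widetilde\SQF(n) = \SQF(n) + \SQF(n-1)$, which combined with~\eqref{asymp_squarefree_Bn} and $|\cB_m|=2^{m-2}$ gives
$$
\widetilde\SQF(n) = \pr{2^{n-2}+2^{n-3}}\tfrac{66}{\pi^4} + O\prst{2^n\exp(-c\sqrt n)} = 2^{n-1}\cdot\tfrac{99}{2\pi^4} + O\prst{2^{n-1}\exp(-c'\sqrt n)},
$$
as claimed.

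The principal technical obstacle is the exponential sum estimate powering the error part: I will need $|S_n(\alpha,\vartheta)|\ll|\Set|\exp(-c\sqrt n)$ uniformly over the non-local $(\alpha,\vartheta)$, strong enough to survive both the truncation at $D=\exp(c_0\sqrt n)$ and the fourfold summation over $d_1,d_2\le D$ and $h_1\bmod d_1^2,\ h_2\bmod d_2^2$. The other delicate point is the bookkeeping that correctly identifies which $(h_1,h_2)$ must be placed in the ``local'' part because of the $p=3$ correlation; it is precisely this separation that produces the constant $66/\pi^4$ rather than the naive $36/\pi^4 = 1/\zeta(2)^2$ suggested by the $(h_1,h_2)=(0,0)$ contribution alone.
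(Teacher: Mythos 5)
Your overall strategy is the same as the paper's: a M\"obius expansion of both squarefree conditions, orthogonality leading to the exponential sums $F_n$, a split into a local part supported at the prime $3$ (whose Euler factor $1-\tfrac19-\tfrac19+\tfrac1{27}=\tfrac{22}{27}$ you compute correctly, giving $66/\pi^4$) and a non-local part controlled by the bound of Lemma~\ref{lem_maj_Fn}, and finally the parity splitting $\widetilde\SQF(n)=\SQF(n)+\SQF(n-1)$, which is exactly how the paper deduces~\eqref{asymp_squarefree}. The main-term bookkeeping and the final arithmetic ($99/(2\pi^4)$) are correct.

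The gap is in your truncation step. You expand both $\mu^2(a)$ and $\mu^2(\mirror{a})$ simultaneously and claim the tail $\max(d_1,d_2)>D$ contributes $O(|\Set|/D)$ ``by exchanging the sums and using $|\{a\le 2^n:\ d^2\mid a\}|\le 2^n/d^2$''. That trivial bound only works on the region where the \emph{other} variable's sum is complete, so that it reassembles $\mu^2(\cdot)\le 1$; in the corner region where, say, $d_1>D$ while $d_2$ is restricted to $d_2\le D$ (or where both exceed $D$), the M\"obius cancellation in the truncated variable is lost, and bounding by absolute values costs a factor of order $D$ (or a divisor-function factor $\tau(\mirror{a})$, which can be as large as $2^{\varepsilon n}$ and swamps $D=\exp(c_0\sqrt n)$). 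So the step fails as justified. It can be repaired: either do what the paper does --- truncate the two expansions \emph{successively} and asymmetrically, bounding the first tail by $|\Set|/D_1$ using only $\mu^2(a)\le 1$, the second by $D_1|\Set|/D_2$, and then choosing $D_2=D_1^2=\exp\bigl(\tfrac12\sqrt{c_0\,n}\bigr)$ --- or keep a complete sum in one variable via inclusion--exclusion and treat the region $d_1,d_2>D$ by a separate argument (for instance Cauchy--Schwarz together with an lcm computation for $\sum_a\#\{d>D:\ d^2\mid a\}^2$). A minor related point: the uniform bound $\ll|\Set|\exp(-c\sqrt n)$ you request at non-local points is not available for arbitrary denominators; Lemma~\ref{lem_maj_Fn} decays like $\exp\bigl(-c_0 n/\log d\bigr)$, which suffices only because after truncation the denominators are $\exp(O(\sqrt n))$, and the constant in $D$ must be taken small enough that this decay beats the $\exp(O(\sqrt n))$ number of terms --- precisely the balancing the paper carries out with its explicit choice of $D_1,D_2$.
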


Examiming the proof of Theorem~\ref{thm-squarefree-v2} one can easily
see that the value $c = 0.0439$ is admissible.

To conclude this section, we point to the reader that so far, it is
not known whether there exists infinitely many squarefree palindromes.

\subsection{Our approach to estimate $\varTheta(n,z)$}
\label{sec_approach}

As already mentioned, we rely on a classical sieve method.  For
$d\geq 1$, we put
\begin{equation}\label{eq:def Td}
  T_n(d) = |\{ a\in  \Set:~d \dv a  \mirror{a}\}|.
\end{equation}
Clearly $T_n(2)=0$.  For $a\in \Set$ randomly chosen, since
$3 \dv \mirror{a}$ is equivalent to $3 \dv a$ (by
\eqref{congr_mod_3}), the probability that $3 \dv a \mirror{a}$ is
close to $1/3$ and for any prime $p\geq 5$, the events ``$p\dv a$'' and
``$p\dv \mirror{a}$'' should be independent so that the probability
that $p \dv a \mirror{a}$ is expected to be
$\frac{1}{p} + \frac{1}{p} - \frac{1}{p^2} = \frac{2p-1}{p^2}$.
Moreover for distinct prime numbers $p_1$ and $p_2$, the events
``$p_1 \dv a \mirror{a}$'' and ``$p_2 \dv a \mirror{a}$'' are expected
to be independent.  These heuristics lead us to define $R_n(d)$ for
$d$ squarefree by
\begin{equation}\label{eq: def Rd}
  T_n(d) = \frac{f(d)}{d}\, | \Set| + R_n(d),
\end{equation} 
where $f$ is the multiplicative function defined for any prime number
$p$ by
\begin{equation}\label{eq:def f}
  f(p)=
  \left\{
    \begin{array}{ll}
      0 & \mbox{ if } p=2,\\
      1 & \mbox{ if } p=3,\\
      \frac{2p-1}{p} & \mbox{ if } p \geq 5,
    \end{array}
  \right.
\end{equation} 
and $f(p^\nu)=0$ for any $\nu\geq 2$ (it follows that $f(d)\neq 0$ if
and only if $d$ is odd and squarefree).

Let
$$
  V(w)=\prod_{2\le p<w}\left(1-\frac{f(p)}{p}\right).
$$
If $w\leq 3$ then
$$
  V(w) = 1
$$
and if $w>3$ then
$$
  V(w) = \frac{2}{3} \prod_{3<p<w}\left(1-\frac{2p-1}{p^2}\right)
  = \frac{2}{3} \prod_{3<p<w}\left(1-\frac{1}{p}\right)^2  .
$$
The {\it Mertens formula\/}, see, for example,~\cite[Part~I,
Theorem~1.12]{Ten15}, implies that
$$
  V(w)
  \asymp \frac{1}{(\log w)^2}
$$
and that there exists an absolute constant $C>0$ such that for any
$2\leq w_1 \leq w$, we have
$$
  \frac{V(w_1)}{V(w)} \leq \left(\frac{\log w}{\log w_1}\right)^2
  \left(1+\frac{C}{\log w_1}\right).
$$

We are now ready to apply the sieve theorem stated in a more precise
form in~\cite[Equations~(1.17)
and~(1.18)]{diamond_halberstam_richert_1988} with
$$
\mathscr{A} = (a \mirror{a})_{a\in  \Set}, \qquad 
\mathscr{P}=\{p \ge 3:~ p \text{ prime}\}, \qquad \kappa =2.
$$
For any $y\geq z \geq 3$, we have
\begin{equation}\label{eq:Sieve-UB}
  \begin{split} 
    \varTheta(n,z)
    \leq | \Set| V(z)\left(h^+\left(\frac{\log y}{\log z}\right)
      +O\(\frac{\log\log y}{(\log y)^{1/6}}\right)\) \qquad & \\
    + O\Biggl(\sum_{\substack{d \dv P(z)\\d<y}} 
    4^{\omega(d)}|R_n(d)|\Biggr)&,
  \end{split}
\end{equation} 
where 
$$
P(z)=\prod_{3\leq p <z}p,
$$
and $h^+(u)$ is some continuous function, which decreases
monotonically towards 1 as $u\to +\infty$.  Moreover, for any
$y\geq z \geq 3$, we have a similar lower bound
\begin{equation}\label{eq:Sieve-LB}
  \begin{split} 
    \varTheta(n,z)
    \geq | \Set| V(z)\left( h^-\left(\frac{\log y}{\log z}\right) +
      O\left(\frac{\log\log y}{(\log y)^{1/6}}\right)
    \right) \qquad & \\
    + O\Biggl(\sum_{\substack{d \dv P(z)\\d<y}}
    4^{\omega(d)}|R_n(d)|\Biggr)&,
\end{split} 
\end{equation}  
where $h^-(u)$ is some continuous function, which increases
monotonically towards 1 as $u\to +\infty$ and $h^-(u)>0$ for
$u>\beta_2=4.2664...$.

Our main technical result, which we establish in
Section~\ref{sec:proof_lemma_1}, is the following.

\begin{lemma}\label{lemma:sieve-error-term}
  Let $0<\xi<1/2$. There exists $c>0$, which depends only on $\xi$,
  such that for any $n\geq 1$, we have
  $$
  \sum_{\substack{d < 2^{\xi n}\\d \text{ odd}}}
  \mu^2(d)  4^{\omega(d)}|R_n(d)|
  \ll_{\xi} 2^n \exp(-c \sqrt{n}).
  $$
\end{lemma}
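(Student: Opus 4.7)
The plan is to evaluate $T_n(d)$ by discrete Fourier analysis modulo appropriate integers, extract the expected main term $f(d)|\Set|/d$ as the contribution of the principal Fourier modes, and bound the remaining nonprincipal contribution using the estimates for
$$
S(\alpha, \vartheta) = \sum_{a \in \Set} \e{\alpha a + \vartheta \mirror{a}}
$$
that the paper foreshadows.

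Because of the congruence \eqref{congr_mod_3}, the prime $3$ must be handled separately; I would first write an odd squarefree $d$ as $d = 3^{\eta} d'$ with $\eta \in \{0, 1\}$ and $\gcd(d', 6) = 1$, so that
$$
T_n(d) = \bigl|\{a \in \Set : 3^{\eta} \dv a,\ d' \dv a\mirror{a}\}\bigr|.
$$
Expanding $\mathbf{1}_{p \dv a\mirror{a}} = \mathbf{1}_{p\dv a} + \mathbf{1}_{p\dv \mirror{a}} - \mathbf{1}_{p\dv a}\mathbf{1}_{p\dv \mirror{a}}$ and multiplying over $p \dv d'$, I obtain
$$
T_n(d) = \sum_{\substack{d_1 d_2 d_3 = d' \\ \text{pairwise coprime}}} (-1)^{\omega(d_3)} N_n(3^{\eta} d_1 d_3, d_2 d_3),
$$
where $N_n(e_1, e_2) = |\{a \in \Set : e_1 \dv a,\ e_2 \dv \mirror{a}\}|$. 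Applying the orthogonality relation $\mathbf{1}_{e \dv m} = e^{-1}\sum_{r \bmod e} \e{rm/e}$ to both divisibility conditions then gives
$$
N_n(e_1, e_2) = \frac{1}{e_1 e_2} \sum_{r \bmod e_1} \sum_{s \bmod e_2} S(r/e_1, s/e_2).
$$

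The principal modes $(r, s) = (0, 0)$ contribute $|\Set|/(3^{\eta} e_1 e_2)$; summing them over the coprime factorization $d_1 d_2 d_3 = d'$ and factoring multiplicatively gives
$$
\frac{|\Set|}{3^\eta}\prod_{p \dv d'}\frac{2p-1}{p^2},
$$
which by \eqref{eq:def f} is exactly $f(d)|\Set|/d$. Hence $R_n(d)$ reduces to a signed linear combination of values $S(r/(3^\eta d_1 d_3), s/(d_2 d_3))$ with $(r, s) \neq (0, 0)$. The exponential sum estimates announced in the abstract should then supply a pointwise bound on $|S(\alpha, \vartheta)|$ exhibiting $\exp(-c_0 \sqrt{n})$-type decay, with enough dependence on the denominators for the sum $\sum_d \mu^2(d) 4^{\omega(d)} |R_n(d)|$ to be controlled via standard divisor estimates such as $\sum_{d \le X} \mu^2(d) 12^{\omega(d)} \ll X (\log X)^{11}$; the restriction $\xi < 1/2$ reflects the range $e_1 e_2 \le d^2 < 2^n$ on which the exponential sum estimate is nontrivial.

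The main obstacle is undoubtedly the exponential sum bound itself: one must prove that $S(\alpha, \vartheta)$ enjoys $\exp(-c_0 \sqrt{n})$-type savings uniformly over a wide two-dimensional family of rationals. The map $a \mapsto \mirror{a}$ swaps the high and low bits of $a$, so $\e{\alpha a}$ probes the low-order bits while $\e{\vartheta \mirror{a}}$ probes the high-order bits; the two oscillations decouple on disjoint bit blocks but interact near the middle. A van der Corput / Cauchy--Schwarz dissection of the binary expansion into blocks of length $\sim \sqrt{n}$ is the natural strategy, and it is precisely the origin of the square-root saving. Additional care is needed for near-principal modes at small-denominator rationals, especially those connected with $p = 3$, which must be handled explicitly to preserve the main term.
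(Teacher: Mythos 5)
Your reduction of $R_n(d)$ to nonprincipal Fourier modes is fine and is in substance the same bookkeeping as the paper's: writing $d=3^{\eta}d'$, expanding $\mathbf{1}_{p \dv a\mirror{a}}$ by inclusion--exclusion over $p\dv d'$ and applying orthogonality is exactly the Fourier-side content of the complete sum $H(d,h_1,h_2)$ of Lemma~\ref{H} (note $H(p,h_1,h_2)=p\mathbf{1}_{p\dv h_1}+p\mathbf{1}_{p\dv h_2}-1$), and your identification of the $(0,0)$-modes with $f(d)\abs{\Set}/d$ is correct, matching Lemmas~\ref{lem_H_f} and~\ref{Rd}. The gap is in the only step that carries the analytic weight of the lemma. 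You propose to finish with a \emph{pointwise} bound on $S(\alpha,\vartheta)$ of strength $\exp(-c\sqrt{n})$, uniform over all nonprincipal rationals with denominators up to $2^{\xi n}$, summed against divisor weights. No such uniform pointwise bound exists: the pointwise estimate that the product/window argument actually yields (Lemma~\ref{lem_maj_Fn}) is $\exp(-c_0 n/\log(4d/3))$, which saves only a bounded factor once the denominator has size $2^{\Theta(n)}$, i.e.\ precisely in the range that dominates your sum, and this is essentially sharp: for $d=2^K-1$ with $K\approx \xi n$ ($K$ odd), $h_2=1$ and $h_1\equiv 2^{c}\bmod d$ with $c$ chosen so that $(n-1+c)\equiv K-2 \bmod K$, the phases $\alpha 2^{n-1-j}-\vartheta 2^{j}$ reduce to $(2^{(n-1-j)\bmod K}+2^{(j+c)\bmod K})/d$, all of norm at most about $1/4$, so $\abs{F_n}\gg_{\xi}1$. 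Hence no pointwise saving beyond a constant is available there, and your heuristic that a van der Corput dissection into blocks of length $\sqrt n$ yields a pointwise square-root saving cannot be made to work.

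What is missing is the averaging over the family of fractions, which is where the paper's real work lies: after the gcd reduction $d=d_0d_1d_2d_3$ with $\gcd(h_1^*,d_2d_3)=\gcd(h_2^*,d_1d_3)=1$ (needed both to bound $\abs{H}\le 2^{\omega(d_0)}d_0d_1d_2$ and to have well-spaced \emph{reduced} fractions), the sum is split at a threshold $W$. For reduced denominators at most $W$ one uses the pointwise bound $\exp(-c_0 n/\log(4W/3))$; for denominators exceeding $W$ one uses mean-value estimates for $\abs{F_n}$ over the family, obtained from the large sieve inequality together with the Sobolev--Gallagher inequality applied to the dominating product $G_N$ (Lemmas~\ref{lem:sobolev-gallagher}, \ref{lem:integral-G_N}, \ref{lem_maj__R_D_l_squared}, \ref{lem:maj_R_D_l}), which give a power saving $(D_1D_2D_3)^{-\varepsilon\eta_0}$ on average even though no pointwise saving holds. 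The exponent $\sqrt{n}$ in the statement arises from balancing $\exp(-c_0 n/\log W)$ against $W^{-\varepsilon\eta_0/2}$, i.e.\ from the choice $W=\exp(\delta\sqrt n)$, not from any intrinsic $\exp(-c\sqrt n)$ decay of the exponential sum. The degenerate modes (one of the two fractions having denominator $1$ or $3$), which you only gesture at, are handled separately and produce the $O(f(d))$ term of Lemma~\ref{Rd}. As written, your proof is therefore incomplete at its central step.
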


In order to prove Lemma~\ref{lemma:sieve-error-term}, we define for
any real numbers $\alpha$ and~$\vartheta$, the exponential sum
\begin{equation}\label{def_F}
  F_n(\alpha,\vartheta)
  = \frac{1}{| \Set|} \sum_{a\in  \Set}\e{\alpha \mirror{a}-\vartheta a}
\end{equation}
and for any integer $d\geq 1$ and $(h_1,h_2)\in \Z^2$,
\begin{equation}\label{def_H}
  H(d,h_1,h_2) = \sum_{\substack{{0\le u,v< d}\\ {d \dv uv}}}
  \e{ \frac{h_1u+h_2v}{d}}. 
\end{equation}
We show in Section~\ref{sec:prel-study-r_nd} that to prove
Lemma~\ref{lemma:sieve-error-term}, it is enough to evaluate the sum
$$ E= \sum_{\substack{d \leq D\\\mu^2(d)=1\\\gcd(d,6)=1}}
\frac{4^{\omega(d)}}{d^2} \sum_{0< h_1,h_2 <d} \abs{H(d,h_1,h_2)}
\abs{F_n\left(\frac{h_2}{d},-\left(\frac{h_1}{d}+\frac{\ell}{3^j}\right)\right)}.
$$
with $D=2^{\xi n}$, $0<\xi<1/2$, $j\in \{0,1\}$,
$\ell \in\{0,\ldots,3^j-1\}$.

We study $F_n(\alpha,\vartheta)$ in detail in Section~\ref{sec_F}.  We
strongly make use of the fact, typical in this kind of situation, that
$|F_n(\alpha,\vartheta)|$ can be written as a product of
cosines. Also, we combine the large sieve inequality and an extension
of the Sobolev-Gallagher inequality to evaluate some discrete averages
of $F_n$.  Here we make the trivial observations that
\begin{equation}\label{symmetry_F}
  \overline{F_n(\alpha,\vartheta)}
  = F_n(-\alpha,-\vartheta)
  = F_n(\vartheta,\alpha)
\end{equation}
and
\begin{equation}\label{eq:Fn-Trivial}
  \abs{  F_n(\alpha,\vartheta)} \le 1.
\end{equation} 
We note that for rational $\alpha$ and $\vartheta$, the
  exponential sums $F_n(\alpha,\vartheta)$ have also been estimated
  in~\cite{Banks-Saidak-Sakata} via a different approach. However the bounds 
  of~\cite{Banks-Saidak-Sakata} are not sufficient for our purpose.  

The quantity $H(d,h_1,h_2)$ is studied in Section~\ref{sec_H}.

In Section~\ref{sec:proof_lemma_1}, we use our estimates on
$ F_n(\vartheta,\alpha)$ and $H(d,h_1,h_2)$ to obtain the bound
$E\ll_{\xi} \exp(-c\sqrt{n})$ for some constant $c>0$ which depends
only on $\xi$.  We complete the proof of the main results in
Section~\ref{completion_proofs}.

\section{Study of $F_n(\alpha,\vartheta)$}\label{sec_F}

\subsection{Generalized Sobolev--Gallagher inequality}

We recall the following: 

\begin{defin}
  We say that a sequence $(x_1,\ldots,x_N) \in \R^N$ is {\it
    $\delta$-spaced modulo~$1$\/} if $\norm{x_i-x_j}\geq \delta$ for
  $1 \leq i < j \leq N$.
\end{defin}

The Sobolev--Gallagher inequality
(see~\cite[Section~3]{montgomery-1978} for relevant references) is
known for continuously differentiable functions.  We extend it to
functions of bounded variation (see for
instance~\cite[p.~355]{titchmarsh-1939}).
\begin{lemma}
  \label{lem:sobolev-gallagher}
  Let $f: \R \to \R$ be a $1$-periodic continuous function of bounded
  variation, with total variation $\Var_f$ on $[0,1]$. For $\delta>0$
  and any $\delta$-spaced sequence $ (x_1,\ldots,x_N) \in \R^N$, we
  have
  $$
  \sum_{n=1}^N \abs{f(x_n)}
  \leq \frac{1}{\delta} \int_0^1 \abs{f(u)}
  du + \frac12 \Var_f .
  $$
\end{lemma}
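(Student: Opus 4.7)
The plan is to adapt the classical proof of the Sobolev--Gallagher inequality (cf.~\cite[Section~3]{montgomery-1978}), replacing the use of the fundamental theorem of calculus by its counterpart for continuous functions of bounded variation. By $1$-periodicity I view $x_1,\ldots,x_N$ as points on the circle $\R/\Z$; the $\delta$-spacing assumption then ensures that the arcs $I_n := [x_n - \delta/2,\, x_n + \delta/2]$ of length $\delta$ have pairwise disjoint interiors. Starting from the trivial bound $|f(x_n)| \le |f(y)| + |f(x_n) - f(y)|$ and averaging over $y \in I_n$ gives
$$
  |f(x_n)| \le \frac{1}{\delta} \int_{I_n} |f(y)|\,dy
  + \frac{1}{\delta} \int_{I_n} |f(x_n) - f(y)|\,dy.
$$

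The heart of the matter is to bound the second average by $\tfrac{1}{2}\, V(I_n)$, where $V(I_n)$ denotes the total variation of $f$ on $I_n$. Since $f$ is continuous and of bounded variation, the function $U(y) := V([x_n - \delta/2,\, y])$ is continuous and nondecreasing on $I_n$, and satisfies $|f(x_n) - f(y)| \le |U(x_n) - U(y)|$ (see~\cite[p.~355]{titchmarsh-1939}). Splitting $I_n$ at $x_n$, monotonicity of $U$ removes the absolute values, and a Fubini swap against the positive Stieltjes measure $dU$ converts each half-integral into one of the form $\int (\delta/2 - |u - x_n|)\, dU(u)$, which is at most $(\delta/2)$ times the variation of $f$ over the corresponding half. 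Adding the two halves produces
$$
  \int_{I_n} |f(x_n) - f(y)|\,dy \le \frac{\delta}{2}\, V(I_n).
$$

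Combining the two displays gives $|f(x_n)| \le \delta^{-1} \int_{I_n} |f(y)|\,dy + \tfrac{1}{2} V(I_n)$. Summing over $n$, the disjointness of the arcs $I_n$ on $\R/\Z$ yields $\sum_n \int_{I_n} |f(y)|\,dy \le \int_0^1 |f(y)|\,dy$, while additivity of the total variation gives $\sum_n V(I_n) \le \Var_f$, which completes the proof. I expect no substantial obstacle here: the only step meriting genuine care is the Fubini exchange against the Stieltjes measure $dU$, but this is routine since $dU$ is a positive measure and the continuity of $f$ guarantees that $U$ has no jumps, so no boundary contributions appear.
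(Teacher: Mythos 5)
Your proof is correct and follows essentially the same route as the paper: both replace $f(x_n)$ by its average over the arc $I_n=[x_n-\delta/2,x_n+\delta/2]$, bound the resulting error by $\tfrac12$ times the variation of $f$ on $I_n$ via the Stieltjes measure of the (continuous, monotone) variation function, and then sum using the disjointness of the arcs modulo $1$. The only cosmetic difference is that you reach the bound $\int_{I_n}\abs{f(x_n)-f(y)}\,dy\le\tfrac{\delta}{2}\,V(I_n)$ through the triangle inequality plus a Fubini exchange producing the tent kernel $\delta/2-\abs{u-x_n}$, whereas the paper writes an exact partial-summation identity for $f(x_n)$ with kernels bounded by $\tfrac12$.
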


\begin{proof}
  We adapt the proof in~\cite{montgomery-1978} with Stieltjes
  integrals.  If $\widetilde x_n$ is the fractional part of
  $x_n-x_1+\frac{\delta}{2}$ and $g(x) = f(x+x_1-\frac{\delta}{2})$
  then $\widetilde x_n \in [\frac{\delta}{2},1-\frac{\delta}{2}]$,
  $g(\widetilde x_n) = f(x_n)$, and $g$ is also a $1$-periodic
  continuous function, with a bounded variation $\Var_g$ on $[0,1]$,
  and $\Var_g = \Var_f$.  It is sufficient to prove the result for
  $g(\widetilde x_n)$.  Hence we may assume from now that
  $x_n \in [\frac{\delta}{2},1-\frac{\delta}{2}]$.  For $x\in[0,1]$ we
  denote by $\Var_f(x)$ the total variation of $f$ on $[0,x]$ (thus
  $\Var_f(1) = \Var_f$).  Since $f$ is continuous, by partial
  summation, for $n\in\{1,\ldots,N\}$ we have
  \begin{align*}
    f(x_n)
    =
    \frac{1}{\delta}
    \int_{x_n - \frac{\delta}{2}}^{x_n + \frac{\delta}{2}} f(u) du &
    +
    \int_{x_n - \frac{\delta}{2}}^{x_n}
    \left( \frac{u-x_n}{\delta} + \frac12\right)
    df(u)\\
   & \quad  \qquad +
    \int_{x_n}^{x_n + \frac{\delta}{2}}
    \left( \frac{u-x_n}{\delta} - \frac12\right)
    df(u). 
  \end{align*}
  For $x_n - \frac{\delta}{2} \leq u \leq x_n$
  we have 
  $$0 \leq \frac{u-x_n}{\delta} + \frac12\leq \frac12
  $$
  and for $x_n \leq u \leq x_n+ \frac{\delta}{2}$
  we have 
  $$-\frac12 \leq \frac{u-x_n}{\delta} - \frac12\leq 0.
  $$
  Hence
  \begin{align*}
    \abs{f(x_n)}
     \leq
    \frac{1}{\delta}
    \int_{x_n - \frac{\delta}{2}}^{x_n +\frac{\delta}{2}} \abs{f(u)} du
    +
    \frac12
    \int_{x_n - \frac{\delta}{2}}^{x_n} 
    d \Var_f(u)
    +
    \frac12
    \int_{x_n}^{x_n +\frac{\delta}{2}}
    d \Var_f(u)
    .
  \end{align*}
  Since the intervals $(x_n - \frac{\delta}{2}, x_n +\frac{\delta}{2})$
  are non-overlapping modulo $1$, it follows that
  \begin{align*}
    \sum_{n=1}^N \abs{f(x_n)} &
    \leq
    \frac{1}{\delta}
    \int_0^1 \abs{f(u)} du
    +
    \frac12 \int_0^1 d \Var_f(u)\\
    &\quad  =
    \frac{1}{\delta}
    \int_0^1 \abs{f(u)} du
    +
    \frac12
    \Var_f
    ,
  \end{align*}
  as desired. 
\end{proof}

\subsection{Product formula}

We need the following useful identities for $F_n(\alpha,\vartheta)$
defined by~\eqref{def_F}.

\begin{lemma}\label{FT_product}
  For $(\alpha, \vartheta) \in \R^2$ and $n\geq 3$, we have
  $$
    |F_n(\alpha,\vartheta)| = |F_n(\vartheta,\alpha)|
    =\prod_{j=1}^{n-2} |U\big ( \alpha 2^{n-1-j}-\vartheta 2^j\big )|
  $$
  where
  $$
    U(x) = \frac{1+\e{x}}{2}
    .
  $$
\end{lemma}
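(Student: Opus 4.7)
The plan is to parametrize the elements of $\Set$ by their free binary digits and factorize the exponential sum.

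First, I would write each $a \in \Set$ in the form $a = 1 + 2^{n-1} + \sum_{j=1}^{n-2} \eps_j 2^j$ with $\eps_j\in\{0,1\}$, using the constraints $\eps_0(a)=\eps_{n-1}(a)=1$ coming respectively from the oddness of $a$ and the $n$-bit condition. Then the digital reversal reads $\mirror{a} = 1 + 2^{n-1} + \sum_{j=1}^{n-2} \eps_j 2^{n-1-j}$. Substituting these expressions into the phase,
$$
\alpha \mirror{a} - \vartheta a = (\alpha-\vartheta)(2^{n-1}+1) + \sum_{j=1}^{n-2} \eps_j\left(\alpha 2^{n-1-j} - \vartheta 2^j\right),
$$
separating a constant part (independent of the $\eps_j$) from a linear combination of independent free digits.

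Next I would use this to rewrite $F_n(\alpha,\vartheta)$ as a product. Since $|\Set|=2^{n-2}$ and the $n-2$ free bits $\eps_1,\ldots,\eps_{n-2}$ range independently over $\{0,1\}$, the sum over $a\in\Set$ factorizes as
$$
F_n(\alpha,\vartheta) = \e{(\alpha-\vartheta)(2^{n-1}+1)} \prod_{j=1}^{n-2} \frac{1+\e{\alpha 2^{n-1-j} - \vartheta 2^j}}{2}.
$$
Taking absolute values kills the leading phase factor, giving directly
$$
\abs{F_n(\alpha,\vartheta)} = \prod_{j=1}^{n-2} \abs{U(\alpha 2^{n-1-j} - \vartheta 2^j)}.
$$

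Finally, for the identity $\abs{F_n(\alpha,\vartheta)} = \abs{F_n(\vartheta,\alpha)}$, I would apply the previous formula with $(\alpha,\vartheta)$ swapped to get
$$
\abs{F_n(\vartheta,\alpha)} = \prod_{j=1}^{n-2} \abs{U(\vartheta 2^{n-1-j} - \alpha 2^j)},
$$
then re-index by $j \mapsto n-1-j$, and use the elementary symmetry $\abs{U(-x)}=\abs{U(x)}$ (which follows from $\overline{1+\e{x}} = 1+\e{-x}$) to match the factors term by term. There is no serious obstacle here; the only thing to be careful about is bookkeeping of the two boundary constraints $\eps_0=\eps_{n-1}=1$, which is what produces the harmless global phase and reduces the product to the range $1\le j\le n-2$ rather than $0\le j\le n-1$.
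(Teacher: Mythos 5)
Your proposal is correct and follows essentially the same route as the paper: parametrize $a\in\Set$ by its $n-2$ free bits, split off the constant phase $(\alpha-\vartheta)(2^{n-1}+1)$, and factor the sum into the product of the $U$-factors before taking absolute values. The only cosmetic difference is in the symmetry step, where you re-index the product and use $\abs{U(-x)}=\abs{U(x)}$, while the paper simply notes $F_n(\alpha,\vartheta)=\overline{F_n(\vartheta,\alpha)}$; both are valid.
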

\begin{proof}
  By writing
  \begin{align*}
    a &= 2^{n-1} + a_{n-2}2^{n-2} + \cdots + a_12 + 1, \\
    \mirror{a} &= 1 + a_{n-2}2 +\cdots+ a_1 2^{n-2} + 2^{n-1},
  \end{align*}
  we obtain
  $$
  F_n(\alpha,\vartheta)
  =\e{(\alpha-\vartheta)(2^{n-1}+1)}\prod_{j=1}^{n-2} U\big ( \alpha
  2^{n-1-j}-\vartheta 2^j\big ).
  $$
  It is also obvious that
  $F_n(\alpha,\vartheta) = \overline{F_n(\vartheta,\alpha)}$.
\end{proof}

\begin{lemma}\label{FT-splitting-product}
  For $(\alpha, \vartheta) \in \R^2$ and $3 \leq m \leq n-1$, we have
  $$
    |F_n(\alpha,\vartheta)| 
    =
    |F_{m}(\alpha 2^{n-m},\vartheta)|
    \cdot
    |F_{n-m+2}(\alpha,\vartheta 2^{m-2})|
    .
  $$
\end{lemma}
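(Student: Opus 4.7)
The plan is to reduce this splitting identity to a purely formal manipulation of the product formula of Lemma~\ref{FT_product}. The key observation is that both factors on the right-hand side, once expanded through Lemma~\ref{FT_product}, will produce pieces of the same product that appears on the left-hand side, and together they should exhaust it.

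First I would apply Lemma~\ref{FT_product} to $|F_m(\alpha 2^{n-m},\vartheta)|$. This yields
$$
|F_m(\alpha 2^{n-m},\vartheta)|
= \prod_{j=1}^{m-2} \bigl|U\bigl(\alpha 2^{n-m}\cdot 2^{m-1-j}-\vartheta 2^j\bigr)\bigr|
= \prod_{j=1}^{m-2} \bigl|U\bigl(\alpha 2^{n-1-j}-\vartheta 2^j\bigr)\bigr|,
$$
which accounts exactly for the factors with index $j\in\{1,\ldots,m-2\}$ in the expansion of $|F_n(\alpha,\vartheta)|$ given by Lemma~\ref{FT_product}.

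Next I would apply Lemma~\ref{FT_product} to $|F_{n-m+2}(\alpha,\vartheta 2^{m-2})|$, obtaining
$$
|F_{n-m+2}(\alpha,\vartheta 2^{m-2})|
= \prod_{j=1}^{n-m} \bigl|U\bigl(\alpha 2^{n-m+1-j}-\vartheta 2^{m-2+j}\bigr)\bigr|.
$$
A change of summation index $k = m-2+j$ (so $j=1$ corresponds to $k=m-1$ and $j=n-m$ to $k=n-2$) transforms this into
$$
|F_{n-m+2}(\alpha,\vartheta 2^{m-2})|
= \prod_{k=m-1}^{n-2} \bigl|U\bigl(\alpha 2^{n-1-k}-\vartheta 2^{k}\bigr)\bigr|,
$$
which captures precisely the remaining factors with index in $\{m-1,\ldots,n-2\}$. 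Multiplying the two products and invoking Lemma~\ref{FT_product} on the left-hand side concludes the proof.

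There is essentially no obstacle here; the statement is a bookkeeping identity. The only point that requires minor care is the verification that the specified ranges $3 \le m \le n-1$ guarantee that both $F_m$ and $F_{n-m+2}$ satisfy the hypothesis $n \ge 3$ of Lemma~\ref{FT_product}, which is immediate from the bounds on $m$.
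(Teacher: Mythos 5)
Your proof is correct and takes essentially the same route as the paper: expand everything through the product formula of Lemma~\ref{FT_product} and match the factors, checking that the index ranges $j\in\{1,\ldots,m-2\}$ and $k\in\{m-1,\ldots,n-2\}$ exhaust the full product. The only cosmetic difference is that you expand $|F_{n-m+2}(\alpha,\vartheta 2^{m-2})|$ directly and reindex, whereas the paper expands the argument-swapped $|F_{n-m+2}(\vartheta 2^{m-2},\alpha)|$ and then appeals to $|F_N(\alpha,\vartheta)|=|F_N(\vartheta,\alpha)|$ and the evenness of $|U|$.
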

\begin{proof}
  For $3 \leq m \leq n$, by Lemma~\ref{FT_product},
  with the help of $k=n-1-j$ we can write
  $$
    |F_n(\alpha,\vartheta)| 
    =
    \left(
      \prod_{j=1}^{m-2} \abs{U\left( \alpha 2^{n-1-j}-\vartheta 2^j\right)}
    \right)
    \left(
      \prod_{k=1}^{n-m} \abs{U\left( \alpha 2^{k}-\vartheta 2^{n-1-k}\right)}
    \right),
  $$
  while
  \begin{align*}
 |F_m(\alpha 2^{n-m},\vartheta)| 
  & =
    \prod_{j=1}^{m-2} \abs{U\left( \alpha 2^{n-m} 2^{m-1-j}-\vartheta 2^j\right)}\\
  &  =
    \prod_{j=1}^{m-2} \abs{U\left( \alpha 2^{n-1-j}-\vartheta 2^j\right)}
  \end{align*}  
  and
  \begin{align*}
    |F_{n-m+2}(\vartheta 2^{m-2},\alpha)| 
    & =
    \prod_{k=1}^{n-m+2-2}
    \abs{U\left( \vartheta 2^{m-2} 2^{n-m+2-1-k}- \alpha 2^k\right)}\\
    & =
    \prod_{k=1}^{n-m} \abs{U\left( \vartheta 2^{n-1-k}- \alpha 2^k\right)}.
  \end{align*}  
  Since
  $$
    |F_{n-m+2}(\alpha,\vartheta 2^{m-2})|
    = |F_{n-m+2}(\vartheta 2^{m-2},\alpha)| 
  $$
  and $\abs{U(\cdot)}$ is even,
  the result follows.
\end{proof}

\subsection{Individual bounds for $F_n$}

We use an idea of Col~\cite[Proof of Lemme~2]{col-2009-palindromes}
and sharpen the arguments of~\cite[Section~4.2]{col-2009-palindromes}
to get an upper bound for $|F_n(\alpha,\vartheta)|$.

\begin{lemma}\label{lem_maj_U}
  For $(x,y,z)\in\R^3$ we have
  $$
    \abs{U(x)U(y)U(z)}
    \leq
    \frac{1}{4}\abs{U(x+y+z)}+\frac{3}{4}.
  $$
\end{lemma}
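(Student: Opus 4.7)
The plan is to reduce the inequality to an elementary trigonometric identity, after first replacing $U$ by $\cos$.

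First I would observe that
$$U(x) = \frac{1+\e{x}}{2} = \e{x/2}\cdot\frac{\e{-x/2}+\e{x/2}}{2} = \e{x/2}\cos(\pi x),$$
so in particular $|U(t)| = |\cos(\pi t)|$ for every real $t$. Applying this to each of $x$, $y$, $z$ and $x+y+z$, the inequality to be proved becomes
$$\abs{\cos(\pi x)\cos(\pi y)\cos(\pi z)} \leq \frac14\abs{\cos(\pi(x+y+z))} + \frac34.$$

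The key step is the product-to-sum identity, applied twice: starting from $2\cos A\cos B = \cos(A-B)+\cos(A+B)$, one gets
$$4\cos A\cos B\cos C = \cos(A+B+C) + \cos(A+B-C) + \cos(A-B+C) + \cos(-A+B+C).$$
Setting $A=\pi x$, $B=\pi y$, $C=\pi z$ and taking absolute values via the triangle inequality, three of the four cosines on the right are bounded by $1$, giving
$$4\abs{\cos(\pi x)\cos(\pi y)\cos(\pi z)} \leq \abs{\cos(\pi(x+y+z))} + 3,$$
which is the desired bound after division by $4$.

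There is no real obstacle here: everything reduces to a standard trigonometric expansion once the modulus $|U(\cdot)|=|\cos(\pi\cdot)|$ is recognized. The only mild subtlety is to single out the term $\cos(A+B+C)$ and keep its absolute value, while trivially bounding the other three terms; this is dictated by the shape of the right-hand side $\tfrac14|U(x+y+z)|+\tfrac34$ in the statement.
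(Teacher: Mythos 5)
Correct, and essentially the paper's own argument: the paper expands $8\,U(x)U(y)U(z)$ as the sum of the eight exponentials $1,\e{x},\e{y},\e{z},\e{x+y},\e{x+z},\e{y+z},\e{x+y+z}$, keeps $1+\e{x+y+z}=2U(x+y+z)$ and bounds the remaining six terms by $1$ each, and your double product-to-sum identity is this same expansion with the common phase $\e{(x+y+z)/2}$ factored out, the eight exponentials pairing into your four cosines. So apart from the cosmetic passage from $\abs{U(t)}$ to $\abs{\cos(\pi t)}$, both proofs isolate the $x+y+z$ term and apply the triangle inequality to the rest in exactly the same way.
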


\begin{proof}
  For $(x,y,z)\in\R^3$ we have
  \begin{align*}
    \abs{U(x)U(y)U(z)}
    &= \frac{1}{8}
     \bigl|1+\e{x}+\e{y}+\e{z}+\e{x+y}\\
     & \qquad\qquad \qquad +\e{x+z}+\e{y+z}+\e{x+y+z}\bigr |
    \\
    &\leq
      \frac{|1+\e {x+y+z}| + 6}{8}
      =
      \frac{1}{4}\abs{U(x+y+z)}+\frac{3}{4},
  \end{align*}
  which concludes the proof. 
\end{proof}

For any integer $N\geq 1$ and any $\vartheta\in \R$, we define
\begin{equation}\label{eq:definition-G_N}
  G_N(\vartheta)=
  \prod_{j=0}^{N-1}
  \left(
    \frac{1}{4}\abs{U\left(\vartheta 2^j\right)} + \frac{3}{4}
  \right).
\end{equation}

\begin{lemma}\label{lem_maj_F_U}
  For any integer $n \geq 4$ and any $(\alpha,\vartheta)\in\R^2$, we
  have
  \begin{equation}\label{maj_F_G}
    \abs{F_n(\alpha,\vartheta)}
    \leq
    \prod_{j=1}^{n-3}\left(\frac{1}{4}\abs{U\left(3 \vartheta 2^j\right)}
      +\frac{3}{4}\right)^{1/3}
    =G_{n-3}^{1/3}(6\vartheta).
  \end{equation}
\end{lemma}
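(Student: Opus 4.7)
The plan is to derive the claimed bound from the product formula of Lemma~\ref{FT_product} by cubing and invoking Lemma~\ref{lem_maj_U} on carefully chosen triples, so that the variable $\alpha$ cancels.

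Write $y_j := \alpha 2^{n-1-j} - \vartheta 2^j$, so that Lemma~\ref{FT_product} gives $|F_n(\alpha,\vartheta)| = P$ with
\[
P = \prod_{j=1}^{n-2} |U(y_j)|.
\]
The key algebraic observation is that $2y_j - y_{j-1}$ has no $\alpha$-component: a direct computation yields
\[
2y_j - y_{j-1} = -\vartheta\,2^{j+1} + \vartheta\,2^{j-1} = -3\vartheta\,2^{j-1}.
\]
Since $|U|$ is even, applying Lemma~\ref{lem_maj_U} with $(x,y,z) = (y_j, y_j, -y_{j-1})$ gives, for every $j \in \{2,\dots,n-2\}$,
\[
|U(y_j)|^2 |U(y_{j-1})| \leq \frac{1}{4}\abs{U(3\vartheta\,2^{j-1})} + \frac{3}{4}.
\]

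The next step is combinatorial bookkeeping. Set $Q := \prod_{j=2}^{n-2} |U(y_j)|^2 |U(y_{j-1})|$. Counting how often each factor $|U(y_{j_0})|$ appears in $Q$, I find $1$ copy for $j_0 = 1$, exactly $3$ copies for $2 \leq j_0 \leq n-3$, and $2$ copies for $j_0 = n-2$. Consequently
\[
P^3 = |U(y_1)|^2\,|U(y_{n-2})|\cdot Q,
\]
and since $|U|\le 1$ by~\eqref{eq:Fn-Trivial} applied at the single-bit level (equivalently $|U(x)|\le 1$), we conclude $P^3 \leq Q$.

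Substituting the inequality displayed above into each factor of $Q$ and re-indexing $k = j-1$ gives
\[
P^3 \leq \prod_{j=2}^{n-2}\left(\frac{1}{4}\abs{U(3\vartheta\,2^{j-1})} + \frac{3}{4}\right) = \prod_{k=1}^{n-3}\left(\frac{1}{4}\abs{U(3\vartheta\,2^{k})} + \frac{3}{4}\right),
\]
and the right-hand side is precisely $G_{n-3}(6\vartheta)$ by~\eqref{eq:definition-G_N} (with the shift $k = j+1$). Taking cube roots yields~\eqref{maj_F_G}.

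The only non-routine step is spotting the triple $(y_j, y_j, -y_{j-1})$ that eliminates $\alpha$; everything else is direct counting plus one application of Lemma~\ref{lem_maj_U}. Note that when $n = 4$ the product $\prod_{j=2}^{n-2}$ contains a single term, which matches the right-hand side having exactly $n-3 = 1$ factor, so the argument is valid throughout the stated range $n \geq 4$.
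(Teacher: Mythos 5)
Your proof is correct and follows essentially the same route as the paper: both rest on the product formula of Lemma~\ref{FT_product} together with Lemma~\ref{lem_maj_U} applied to the triple $(y_j,y_j,-y_{j-1})$ (the paper's choice $x=y_j$, $y=z=-y_{j+1}$ is the same triple up to sign and reindexing, and $\alpha$ cancels identically), with the same discarding of the two boundary factors $\abs{U(y_1)}^2\abs{U(y_{n-2})}\le 1$. The only cosmetic difference is that you cube and count multiplicities where the paper distributes exponents $1/3$ and $2/3$ directly.
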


\begin{proof}
  Applying Lemma~\ref{FT_product} and writing
  \begin{align*}
    \abs{F_n(\alpha,\vartheta)}
    & =
      \prod_{j=1}^{n-2}
      \abs{U\left( \alpha 2^{n-1-j}-\vartheta 2^j\right)}\\
    & \leq
      \prod_{j=1}^{n-3}
      \abs{U\left( \alpha 2^{n-1-j}-\vartheta 2^j\right)}^{1/3}
      \prod_{j=2}^{n-2}
      \abs{\conjugate{U\left( \alpha 2^{n-1-j}-\vartheta 2^j\right)}}^{2/3}
      ,
  \end{align*}
  we get
  $$
  \abs{F_n(\alpha,\vartheta)}
  \leq \prod_{j=1}^{n-3} \abs{ U\left(
      \alpha 2^{n-1-j}-\vartheta 2^j\right) U^2\left( -\alpha
      2^{n-2-j} + \vartheta 2^{j+1}\right) }^{1/3}.
  $$
  Taking
  $$
  x=\alpha 2^{n-1-j}-\vartheta 2^j
  \mand 
  y=z=-\alpha 2^{n-2-j} + \vartheta 2^{j+1}
  $$
  in Lemma~\ref{lem_maj_U} we obtain the desired estimate.
\end{proof}

\begin{lemma}\label{lemma_large_norm}
  For $q\in \Z$, $q\geq 2$ and $\vartheta \in \R\setminus\Z$, the
  integer
  $$
    j = 
    \left\lfloor \frac{\log\frac{q}{(q+1)\norm{\vartheta}}}{\log q}\right\rfloor
  $$
  satisfies
  $$
    \norm{q^j\vartheta} \geq \frac{1}{q+1}.
  $$
\end{lemma}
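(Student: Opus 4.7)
The plan is to show directly that for the explicit $j$ given, the product $q^j\|\vartheta\|$ lies in an interval narrow enough to preclude $\|q^j\vartheta\|$ from being small, and then to convert this into the desired bound via an elementary wrap-around argument.

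First I would verify that $j\ge 0$. Since $\vartheta\notin\Z$ we have $\|\vartheta\|\in(0,1/2]$, so
$$
\frac{q}{(q+1)\|\vartheta\|}\ge \frac{2q}{q+1}>1,
$$
which forces the argument of the floor to be positive, hence $j\ge 0$.

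Next, the defining inequality of the floor gives
$$
j\le \frac{\log\bigl(q/((q+1)\|\vartheta\|)\bigr)}{\log q}<j+1.
$$
Exponentiating base $q$ and multiplying by $\|\vartheta\|>0$, this yields
$$
\frac{1}{q+1}<q^j\|\vartheta\|\le \frac{q}{q+1}.
$$
Set $\mu=q^j\|\vartheta\|$; then $\mu\in\bigl(1/(q+1),\,q/(q+1)\bigr]\subset(0,1)$, the key inclusion being $\mu<1$.

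Finally, write $\vartheta=k+\delta$ where $k\in\Z$ is closest to $\vartheta$ and $|\delta|=\|\vartheta\|$. Then $q^jk\in\Z$, so $\|q^j\vartheta\|=\|q^j\delta\|$. Because $|q^j\delta|=\mu\in(0,1)$, the nearest integer to $q^j\delta$ is either $0$ or $\pm 1$, and
$$
\|q^j\vartheta\|=\min(\mu,\,1-\mu).
$$
The first term is $>1/(q+1)$ by the lower bound on $\mu$, while the second is $\ge 1-q/(q+1)=1/(q+1)$ by the upper bound on $\mu$, so $\|q^j\vartheta\|\ge 1/(q+1)$ in both cases.

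There is no genuine obstacle here — the lemma is really a bookkeeping calculation — but the one point demanding care is the wrap-around when $\mu>1/2$. This is exactly why the definition of $j$ is tuned so that $\mu\le q/(q+1)$ rather than merely $\mu\le 1$; the symmetric bound $1-\mu\ge 1/(q+1)$ would otherwise fail.
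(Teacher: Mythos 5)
Your proof is correct and follows essentially the same route as the paper: deduce $j\ge 0$ from $\norm{\vartheta}\le 1/2$, use the floor's defining inequality to place $q^j\norm{\vartheta}$ in $\bigl(\tfrac{1}{q+1},\tfrac{q}{q+1}\bigr]$, and reduce $\norm{q^j\vartheta}$ to $\norm{q^j\norm{\vartheta}}$ via integrality of $q^j$ together with the evenness and periodicity of $\norm{\cdot}$. Your write-up merely makes the final wrap-around step ($\norm{q^j\vartheta}=\min(\mu,1-\mu)$) more explicit than the paper does.
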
 

\begin{proof}
  Since $\norm{\vartheta}\le 1/2$, we have $j\ge 0$.  Therefore
  $q^j \in \Z$ and we have
  \begin{math}
    \norm{q^j \vartheta} = \norm{q^j \norm{\vartheta}}
  \end{math}
  by parity and periodicity.  By definition of $j$, we have
  $$
  \frac{1}{q+1} < q^j \norm{\vartheta}
  \leq \frac{q}{q+1} = 1-\frac{1}{q+1}, 
  $$
  which gives the result.
\end{proof}

We are now ready to establish one of our main technical tools.
\begin{lemma}\label{lem_maj_Fn}
  For $\alpha \in \R$ and $n,\ell, h, d \in \Z$ such that $n\geq 4$,
  $d\geq 5$, $d$ is odd and $d \notdv 3h$, we have
  $$
    \left|F_n\left(\alpha,\frac{h}{d}+\frac{\ell}{3}\right)\right|
    \ll \exp\left(\frac{-c_0\,n}{\log\left(\frac{4d}{3}\right)}\right)
    $$
  where
  $$
    c_0 = \frac{1}{3} \log\left(\frac{8}{7}\right)\log 2 = 0.0308\ldots.
  $$
\end{lemma}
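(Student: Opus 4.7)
The plan is to reduce the problem to bounding $G_{n-3}(6h/d)$ via Lemma \ref{lem_maj_F_U}, then to exhibit enough ``good'' indices $j$ at which the factor $\frac{1}{4}|U(6h\cdot 2^j/d)| + \frac{3}{4}$ in the defining product of $G_{n-3}(6h/d)$ is bounded away from $1$. Since $2\ell \cdot 2^j \in \Z$ and $|U(\cdot)|$ is $1$-periodic, Lemma \ref{lem_maj_F_U} yields
$$
\left|F_n\left(\alpha, \frac{h}{d}+\frac{\ell}{3}\right)\right| \leq G_{n-3}^{1/3}(6h/d).
$$
I call $j \in \{0,\ldots,n-4\}$ \emph{good} if $\norm{6h\cdot 2^j/d} \geq 1/3$. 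For such a $j$, using $|U(x)| = |\cos(\pi x)|$, one has $\frac{1}{4}|U(6h\cdot 2^j/d)| + \frac{3}{4} \leq \frac{1}{4}\cos(\pi/3) + \frac{3}{4} = 7/8$, while all remaining factors of $G_{n-3}(6h/d)$ are trivially bounded by $1$.

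Next, I would verify that $6h \cdot 2^j / d \notin \Z$ for every $j \geq 0$, which is what allows Lemma \ref{lemma_large_norm} to be applied repeatedly. Indeed, since $d$ is odd, $\gcd(d, 2^j) = 1$, so $d \mid 6h\cdot 2^j$ would force $d \mid 6h$ and hence $d \mid 3h$, contradicting the hypothesis. In particular, $\norm{2^j \cdot 6h/d} \geq 1/d$ for all $j \geq 0$.

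I would then construct good indices greedily. Starting at position $p_0 = 0$, at step $k$ I apply Lemma \ref{lemma_large_norm} with $q = 2$ to $\vartheta^{(k)} = 2^{p_k}\cdot 6h/d \notin \Z$, obtaining an exponent
$$
j_k' = \left\lfloor \frac{\log(2/(3\norm{\vartheta^{(k)}}))}{\log 2}\right\rfloor \leq \log_2(2d/3)
$$
(the bound uses $\norm{\vartheta^{(k)}} \geq 1/d$) such that $\norm{2^{j_k'}\vartheta^{(k)}} \geq 1/3$. Setting $j_k = p_k + j_k'$ gives a good index, and updating $p_{k+1} = j_k + 1$ advances the position by at most $\log_2(4d/3)$. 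Iterating produces at least $M \geq (n-3)/\log_2(4d/3) - 1$ good indices in $\{0,\ldots,n-4\}$.

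Combining everything,
$$
\left|F_n\left(\alpha, \frac{h}{d}+\frac{\ell}{3}\right)\right| \leq G_{n-3}^{1/3}(6h/d) \leq (7/8)^{M/3} = \exp\left(-\frac{M\log(8/7)}{3}\right),
$$
and substituting the lower bound on $M$, rewriting $\log_2 = \log/\log 2$, and absorbing the $O(1)$ loss into the implicit constant of $\ll$ delivers the claimed bound with $c_0 = \frac{1}{3}\log(8/7)\log 2$. The only delicate point is the iteration bookkeeping --- maintaining $\vartheta^{(k)} \notin \Z$ throughout and securing the uniform lower bound $\norm{\vartheta^{(k)}} \geq 1/d$ --- but both are furnished by the preliminary observation, so no genuine analytic obstacle remains.
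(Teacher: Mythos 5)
Your proposal is correct and follows essentially the same route as the paper: reduce to $G_{n-3}^{1/3}(6h/d)$ via Lemma~\ref{lem_maj_F_U} (using periodicity to drop the $\ell/3$ term), then use Lemma~\ref{lemma_large_norm} with $q=2$ to locate roughly $n\log 2/\log(4d/3)$ indices where the factor is at most $7/8$, bounding the rest by $1$. The only difference is bookkeeping — the paper partitions the index range into blocks of fixed length $J$ and extracts one good index per block, whereas you iterate greedily — and both yield the same count and the same constant $c_0$.
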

\begin{proof}
  Since $\abs{U(x)} = \cos\pi\norm{x}$ for any $x\in\R$, we have by
  Lemma~\ref{lem_maj_F_U},
  \begin{align*}
    \left|F_n\left(\alpha,\frac{h}{d}+\frac{\ell}{3}\right)\right|
    &\leq
      \prod_{j=1}^{n-3}\left(\frac{1}{4}\cos \pi \norm{3 \left(\frac{h}{d}
      +\frac{\ell}{3}\right) 2^j} +\frac{3}{4}\right)^{1/3}\\
    &\quad= \prod_{j=1}^{n-3}\left(\frac{1}{4}\cos \pi \norm{\frac{3h2^{j}}{d}}
      +\frac{3}{4}\right)^{1/3}.
  \end{align*}
  Thus, defining 
  $$
    J =  1+\left \lfloor\frac{\log\left(\frac{2d}{3}\right)}{\log 2} \right\rfloor
    \geq 1,
    \qquad
    K = \left\lfloor \frac{n-3}{J}\right\rfloor,
    \qquad
    \vartheta_{k} = \frac{3h2^{kJ+1}}{d}
  $$
  for any integer $k$, we may write
  \begin{align*}
    \left|F_n\left(\alpha,\frac{h}{d}+\frac{\ell}{3}\right)\right|
    &\leq
      \prod_{0\leq k < K} \prod_{0\leq j < J}
      \left(\frac{1}{4}\cos \pi \norm{ 2^j\vartheta_{k}}
      +\frac{3}{4}\right)^{1/3}.
  \end{align*}
  We fix $k\in \{0,\ldots,K-1\}$.
  Since $d$ is odd and $d \notdv 3h$, we have
  \begin{math}
    \norm{\vartheta_k} \geq \frac{1}{d}.
  \end{math}
  Thus, denoting
  $$
    j_k = \left\lfloor \frac{\log\frac{2}{3\norm{\vartheta_k}}}{\log 2}\right\rfloor,
  $$
  we have
  $$
    0 \leq j_k \leq J-1
  $$
  and by Lemma~\ref{lemma_large_norm}, 
  $$
    \norm{2^{j_k}\vartheta_k} \geq \frac{1}{3}.
  $$
  It follows that 
  $$
    \left|F_n\left(\alpha,\frac{h}{d}+\frac{\ell}{3}\right)\right|
    \leq \prod_{0\leq k < K}
    \left(\frac{1}{4}\cos \pi \norm{ 2^{j_k}\vartheta_{k}} +\frac{3}{4}\right)^{1/3}
    \leq \left(\frac{7}{8}\right)^{K/3}.
  $$
  Since
  \begin{math}
    J \leq \frac{\log\left(\frac{4d}{3}\right)}{\log 2},
  \end{math}
  we have
  $$
    K > \frac{n-3}{J} - 1
    \geq \frac{(n-3)\log 2}{\log\left(\frac{4d}{3}\right)} - 1
    = \frac{n\log 2}{\log\left(\frac{4d}{3}\right)} +O(1),
  $$
  we get
  \begin{align*}
    \left|F_n\left(\alpha,\frac{h}{d}+\frac{\ell}{3}\right)\right|
    \leq \left(\frac{7}{8}\right)^{\frac{K}{3}}
    \ll \exp\left(\frac{-n \log\left(\frac{8}{7}\right)\log 2}
    {3\log\left(\frac{4d}{3}\right)}\right),
  \end{align*}
  which completes the proof.
\end{proof}

\subsection{Bounds on some continuous averages}

To fix the ideas, it is interesting to note that by orthogonality we
have
\begin{align*}
  &  \int_0^1 \abs{F_n(\alpha,\vartheta)}^2 d\vartheta = \frac{1}{|
    \Set|^2} \int_0^1
    \abs{\sum_{a\in  \Set}\e{\alpha \mirror{a}-\vartheta a}}^2 d\vartheta\\
  &\qquad  = \frac{1}{| \Set|^2} \sum_{a_1\in \Set}\sum_{a_2\in \Set}
    \e{\alpha(\mirror{a_1}-\mirror{a_2})} \int_0^1
    \e{\vartheta(a_2-a_1)} d\vartheta = \frac{1}{| \Set|}, 
\end{align*}
and by the Cauchy--Schwarz inequality this gives the trivial upper
bound
$$
\int_0^1 \abs{F_n(\alpha,\vartheta)} d\vartheta \leq \left( \int_0^1
  \abs{F_n(\alpha,\vartheta)}^2 d\vartheta\right)^{1/2} = \frac{1}{|
  \Set|^{1/2}}.
$$

The following estimate of $G_N$ (defined by~\eqref{eq:definition-G_N})
is a key argument in the sequel.

\begin{lemma}\label{lem:integral-G_N}
  For any integer $N\geq 1$ and any real number
  $\kappa \in \left[0,1\right]$, we have
  $$
    \int_0^1  G_N^{\kappa}(\vartheta) d\vartheta \leq C(\kappa)^N
  $$
  where
  $$
    C(\kappa) = 
    \left( \frac{\sqrt{2}}{8} + \frac{3}{4} \right)^{\kappa}.
  $$
\end{lemma}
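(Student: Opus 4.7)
The plan is to first reduce the inequality to the case $\kappa = 1$ by Jensen's inequality, and then to estimate $\int_0^1 G_N(\vartheta)\,d\vartheta$ by expanding the product and invoking Cauchy--Schwarz together with Fourier orthogonality on distinct powers of $2$.

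Since the map $x\mapsto x^\kappa$ is concave on $[0,+\infty)$ when $\kappa\in[0,1]$ and $d\vartheta$ is a probability measure on $[0,1]$, Jensen's inequality gives
$$
\int_0^1 G_N^\kappa(\vartheta)\,d\vartheta \leq \left(\int_0^1 G_N(\vartheta)\,d\vartheta\right)^{\!\kappa},
$$
so it suffices to prove the case $\kappa = 1$, namely
$\int_0^1 G_N(\vartheta)\,d\vartheta \leq \bigl(\frac{\sqrt{2}}{8}+\frac{3}{4}\bigr)^N$.

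For this, I would expand the product defining $G_N$:
$$
G_N(\vartheta) = \sum_{S\subseteq\{0,\ldots,N-1\}} \left(\frac{1}{4}\right)^{\!|S|}\left(\frac{3}{4}\right)^{\!N-|S|}\prod_{j\in S}\abs{U(\vartheta 2^j)},
$$
and estimate the integral of each summand. Cauchy--Schwarz against the constant function $1$ yields
$$
\int_0^1 \prod_{j\in S}\abs{U(\vartheta 2^j)}\,d\vartheta \leq \left(\int_0^1 \prod_{j\in S}\abs{U(\vartheta 2^j)}^2\,d\vartheta\right)^{\!1/2}.
$$
Using the identity $\abs{U(x)}^2 = \frac{1}{2}+\frac{1}{2}\cos(2\pi x)$ and writing each cosine as a sum of two exponentials, the integrand expands as a sum indexed by sign choices $\epsilon\in\{-1,+1\}^{T}$ for $T\subseteq S$, and orthogonality on $[0,1]$ retains only those terms for which $\sum_{j\in T}\epsilon_j 2^j=0$. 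By the uniqueness of binary expansions such cancellation forces $T=\emptyset$, so the $L^2$ integral equals exactly $2^{-|S|}$ and therefore $\int_0^1\prod_{j\in S}\abs{U(\vartheta 2^j)}\,d\vartheta \leq 2^{-|S|/2}$.

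Inserting this back into the expansion of $G_N$ and summing via the binomial theorem,
$$
\int_0^1 G_N(\vartheta)\,d\vartheta \leq \sum_{k=0}^N\binom{N}{k}\left(\frac{1}{4\sqrt{2}}\right)^{\!k}\left(\frac{3}{4}\right)^{\!N-k} = \left(\frac{\sqrt{2}}{8}+\frac{3}{4}\right)^{\!N} = C(1)^N,
$$
which combined with the Jensen reduction yields the desired bound $C(\kappa)^N$ for every $\kappa\in[0,1]$. The only step requiring any real care is the Fourier orthogonality computation giving the precise value of the second moment; once that is in place, the rest is a formal binomial calculation.
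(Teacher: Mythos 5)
Your proof is correct, and it takes a genuinely different route from the paper. The paper proceeds by induction on $N$: using $1$-periodicity and the doubling structure it writes $\int_0^1 G_N^{\kappa}(\vartheta)\,d\vartheta=\int_0^1 \Phi_\kappa(\vartheta)G_{N-1}^{\kappa}(\vartheta)\,d\vartheta$ and bounds the one-step average $\Phi_\kappa$ pointwise by $C(\kappa)$, via concavity of $x\mapsto x^{\kappa}$ and $\abs{\cos x}+\abs{\sin x}\le\sqrt{2}$, then iterates. You instead reduce to $\kappa=1$ by Jensen (legitimate: $x\mapsto x^{\kappa}$ is concave and nondecreasing on $[0,\infty)$, $d\vartheta$ is a probability measure, and $\left(C(1)^N\right)^{\kappa}=C(\kappa)^N$), expand the product over subsets $S$, apply Cauchy--Schwarz, and evaluate the second moment exactly by orthogonality: since $\abs{U(x)}^2=\tfrac12+\tfrac12\cos(2\pi x)$ and a signed sum $\sum_{j\in T}\epsilon_j2^j$ of distinct powers of $2$ vanishes only for $T=\emptyset$ (the top power dominates the rest), one gets $\int_0^1\prod_{j\in S}\abs{U(\vartheta 2^j)}^2\,d\vartheta=2^{-\abs{S}}$, and the binomial theorem reproduces exactly the paper's constant $\tfrac{\sqrt2}{8}+\tfrac34$ (not an accident: the $\sqrt2$ from $\abs{\cos}+\abs{\sin}\le\sqrt2$ and the per-factor $L^2$ saving $2^{-1/2}$ encode the same mean value of $\abs{U}$). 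What each approach buys: the paper's transfer-operator-style induction treats every $\kappa\in[0,1]$ directly and is robust --- it only needs periodicity and a pointwise bound on a one-step average, so it adapts easily to other bases or digit weights; your argument is global and self-contained, gives an exact $L^2$ identity rather than an estimate, but leans on the specific lacunary structure of the frequencies $2^j$ and on the Jensen reduction, which is available only because $\kappa\le 1$ --- which is all the lemma asserts, so there is no loss for the application (the paper uses $\kappa=2/3$).
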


\begin{proof} 
  For any $N\geq 1$, we have
  \begin{align*}
    \int_0^1  G_N^{\kappa}(\vartheta) d\vartheta
    &=
      \int_0^1
      \left(
      \frac{1}{4}\abs{U\left(\vartheta \right)} +\frac{3}{4}
      \right)^{\kappa}
      G_{N-1}^{\kappa}(2\vartheta) d\vartheta\\
    &=
      \int_0^{1/2}
      \left(
      \frac{1}{4}\abs{U\left(\vartheta \right)} +\frac{3}{4}
      \right)^{\kappa}
      G_{N-1}^{\kappa}(2\vartheta) d\vartheta\\
      & \qquad \qquad  +
      \int_0^{1/2}
      \left(
      \frac{1}{4}\abs{U\left(\vartheta+\frac{1}{2} \right)} +\frac{3}{4}
      \right)^{\kappa}
      G_{N-1}^{\kappa}(2\vartheta) d\vartheta\\
    &=
      \int_0^{1}
      \left(
      \frac{1}{4}\abs{U\left(\frac{\vartheta}{2} \right)} +\frac{3}{4}
      \right)^{\kappa}
      G_{N-1}^{\kappa}(\vartheta) \frac{d\vartheta}{2}\\
      & \qquad \qquad    + \int_0^{1}
      \left(
      \frac{1}{4} \abs{U\left(\frac{\vartheta+1}{2}\right)} +\frac{3}{4}
      \right)^{\kappa}
      G_{N-1}^{\kappa}(\vartheta) \frac{d\vartheta}{2}\\
    &=
      \int_0^{1} \Phi_\kappa(\vartheta) G_{N-1}^{\kappa}(\vartheta) d\vartheta, 
  \end{align*}
  where
  $$
    \Phi_\kappa(\vartheta)
    =
    \frac{1}{2}
    \left(
      \frac{1}{4} \abs{U\left(\frac{\vartheta}{2}\right)} + \frac{3}{4}
    \right)^{\kappa}
    +
    \frac{1}{2}
    \left(
      \frac{1}{4}\abs{U\left(\frac{\vartheta+1}{2}\right)} + \frac{3}{4}
    \right)^{\kappa}.
  $$
  Since $x\mapsto x^{\kappa}$ is concave for $0\leq\kappa\leq 1$ and
  $\abs{\cos x} + \abs{\sin x} \leq \sqrt{2}$, we have
  \begin{align*}
    \Phi_\kappa(\vartheta)
    &
      \leq
      \left(
      \frac{1}{2}
      \left(
      \frac{1}{4}\abs{U\left(\frac{\vartheta}{2}\right)} +\frac{3}{4}
      \right)
      +
      \frac{1}{2}
      \left(
      \frac{1}{4}\abs{U\left(\frac{\vartheta+1}{2}\right)} +\frac{3}{4}
      \right)
      \right)^{\kappa}\\
    &=
      \left(
      \frac{1}{8}\left(
      \abs{U\left(\frac{\vartheta}{2}\right)}
      + \abs{U\left(\frac{\vartheta+1}{2}\right)}
      \right)
      + \frac{3}{4} \right)^{\kappa}\\
    &=
      \left(
      \frac{1}{8}
      \left(
      \abs{\cos \frac{\pi \vartheta}{2}} + \abs{\sin\frac{\pi \vartheta}{2}}
      \right)
      + \frac{3}{4}
      \right)^{\kappa}\\
    &\leq \left( \frac{\sqrt{2}}{8}
      + \frac{3}{4} \right)^{\kappa} = C(\kappa).
  \end{align*}
  We have proved that
  $$
  \int_0^1  G_N^{\kappa}(\vartheta) d\vartheta
  \leq C(\kappa) \int_0^1  G_{N-1}^{\kappa}(\vartheta) d\vartheta.
  $$
  By induction, it follows that
  $$
    \int_0^1  G_N^{\kappa}(\vartheta) d\vartheta 
    \leq C(\kappa)^N \int_0^1  G_{0}^{\kappa}(\vartheta) d\vartheta 
    =  C(\kappa)^N
  $$
  which completes the proof.
\end{proof}

\begin{lemma}\label{maj_L_1_norm_G'}
  For any integer $N\geq 1$ and any real number $\kappa >0$,
  $G_{N}^{\kappa}$ admits almost everywhere a derivative
  $\left(G_{N}^{\kappa} \right)'$ which satisfies
  $$
    \norm{\left(G_{N}^{\kappa} \right)'}_1
    \leq \frac{\kappa \pi}{3} \, 2^N \norm{G_{N}^{\kappa}}_1.
  $$  
\end{lemma}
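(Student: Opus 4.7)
The plan is to exploit the product structure
$$
G_N(\vartheta) = \prod_{j=0}^{N-1}\phi(2^j\vartheta),
\qquad
\phi(\vartheta) := \tfrac{1}{4}\abs{U(\vartheta)} + \tfrac{3}{4} = \tfrac{1}{4}\abs{\cos(\pi\vartheta)} + \tfrac{3}{4},
$$
and carry out a logarithmic differentiation. First I would record the elementary properties of the single factor $\phi$: it is $1$-periodic, continuous, and of class $C^1$ outside the exceptional set $\tfrac{1}{2}+\Z$ where $\cos(\pi\vartheta)$ vanishes, with $\phi(\vartheta)\geq\tfrac{3}{4}$ everywhere and $\abs{\phi'(\vartheta)} = \tfrac{\pi}{4}\abs{\sin(\pi\vartheta)} \leq \tfrac{\pi}{4}$ at every point of differentiability.

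Since $G_N(\vartheta)\geq (3/4)^N > 0$, the function $G_N^{\kappa}$ is differentiable at every $\vartheta$ for which $2^j\vartheta \notin \tfrac{1}{2}+\Z$ for all $0\leq j\leq N-1$; the complement of this condition is finite, so $G_N^{\kappa}$ is differentiable almost everywhere and the standard product/chain rule yields
$$
(G_N^{\kappa})'(\vartheta) = \kappa\, G_N^{\kappa}(\vartheta) \sum_{j=0}^{N-1} 2^j\,\frac{\phi'(2^j\vartheta)}{\phi(2^j\vartheta)}
\qquad \text{a.e.}
$$

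Combining $\abs{\phi'}\leq\tfrac{\pi}{4}$ with $\phi\geq\tfrac{3}{4}$ gives the pointwise bound $\abs{\phi'}/\phi \leq \pi/3$, whence
$$
\abs{(G_N^{\kappa})'(\vartheta)}
\leq \frac{\kappa\pi}{3}\,G_N^{\kappa}(\vartheta) \sum_{j=0}^{N-1} 2^j
= \frac{\kappa\pi}{3}\bigl(2^N-1\bigr)\,G_N^{\kappa}(\vartheta).
$$
Integrating over $[0,1]$ and using $2^N-1 < 2^N$ then yields the claimed inequality. There is no real obstacle here; the only point demanding any care is the a.e.\ differentiability, which is settled by the finiteness of the exceptional set noted above.
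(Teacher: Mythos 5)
Your proof is correct and follows essentially the same route as the paper: logarithmic differentiation of the product, the pointwise bound $\abs{\phi'}/\phi \leq (\pi/4)/(3/4) = \pi/3$ for each factor, and summation of the geometric series $\sum_{j<N} 2^j \leq 2^N$. The only cosmetic remark is that the exceptional set where differentiability fails is finite per period (countable on $\R$), but it is of measure zero in any case, so your argument stands as written.
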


\begin{proof}
  Since
  $$
    G_{N}(\vartheta)
    = \prod_{j=0}^{N-1}
    \left(
      \frac{1}{4}\abs{\cos(\pi \vartheta 2^j)} + \frac{3}{4}
    \right),
  $$
  we have for almost all $\vartheta \in \R$,
  $$
    \abs{\left(G_{N}^{\kappa} \right)'(\vartheta)}
    \leq
    \abs{G_{N}^{\kappa}(\vartheta)}
    \sum_{j=0}^{N-1}
    \frac{\kappa \frac{\pi 2^j}{4}\abs{\sin(\pi\vartheta2^j)}}{
      \frac{1}{4}\abs{\cos\left(\pi\vartheta 2^j\right)} + \frac{3}{4}}
    \leq
    \frac{\kappa \pi}{3} 2^N
    \abs{G_{N}^{\kappa}(\vartheta)}
  $$
  and the result follows.
\end{proof}

\subsection{Bounds on some discrete averages of $F_n$}

For a triple  $\cD = (D_1,D_2,D_3)$ we define the set
\begin{align*}
  \fD(\cD) = \bigl\{(d_1,d_2,d_3)\in \N^3:~ d_1 \sim D_1, \ d_2 & \sim D_2, \ d_3\sim D_3, \\
   & \gcd(d_1d_2d_3,6)=1\bigr\}, 
\end{align*}
and for
$$
  \vd  =(d_1,d_2,d_3) \in \fD(\cD)
$$
we define
\begin{align*}
  \fH(\vd) = \bigl\{(h_1,h_2)\in \N^2:~0 <h_1& <d_2d_3, \  0<h_2<d_1d_3,\\
  & \gcd(h_1,d_2d_3)= \gcd (h_2,d_1d_3)=1\bigr\}.
\end{align*}

Finally, for $\cD = (D_1,D_2,D_3)$,
$\bfell = (\ell_1,\ell_2)\in\Z^2$ and $r\in\{1,2\}$, we define
\begin{equation}\label{notation_R_D_l}
\begin{split}
 & M_{r}(n;\cD,\bfell)\\
 &\qquad  =
  \sum_{\vd  \in \fD(\cD)}
  \sum_{(h_1, h_2)\in  \fH(\vd)}
  \abs{
    F_n\left(
      \frac{h_2}{d_1d_3}+\frac{\ell_1}{3},
      -\frac{h_1}{d_2d_3}-\frac{\ell_2}{3}
    \right)
  }^r.
  \end{split}
\end{equation}

Let us introduce the non-decreasing function $\tautilde$ defined by
$$
\tautilde(t)= \max_{d \leq t} \tau(d),
\qquad
t \in [1,+\infty)
$$
and observe that, for any positive integer $d$,
\begin{equation}
  \label{eq:tau tautilde}
  2^{\omega(d)}\leq \tau(d)  \leq  \tautilde(d).
\end{equation}

Recalling the definition of $C(\kappa)$ from
Lemma~\ref{lem:integral-G_N}, we define
$$
  \eta_0 = -\frac{\log C(2/3)}{\log  2}\approx 0.073\ldots.
$$

In order to bound $M_{1}(n;\cD,\bfell)$, we first establish the
following bound on $M_{2}(n;\cD,\bfell)$.
\begin{lemma}\label{lem_maj__R_D_l_squared}
  For any $n\geq 10$, any
  $\cD = (D_1,D_2,D_3) \in \left[1,+\infty\right)^3$ and any
  $\varepsilon \in \left(0,1\right]$ such that  
  \begin{equation}\label{eq:cond_squared_D1D2D3}
      D_1^2D_2^{2\varepsilon}D_3^{1+\varepsilon} \leq 2^{n-10},
  \end{equation}
  we have for any $\bfell=(\ell_1,\ell_2)\in\Z^2$,
  $$
    \frac{M_{2}(n;\cD,\bfell)}{D_1D_2D_3^2}
    \ll \tautilde(4D_2D_3) 
    \frac{D_2}{D_1} \left(D_2^{2} D_3\right)^{-\varepsilon\eta_0}
  $$
  where the implicit constant is absolute.
\end{lemma}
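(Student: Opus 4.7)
The plan is to combine the product decomposition of $|F_n|^2$ from Lemma~\ref{FT-splitting-product} with the generalized Sobolev--Gallagher inequality (Lemma~\ref{lem:sobolev-gallagher}) and the large sieve inequality, exploiting the symmetry from Lemma~\ref{FT_product} to route the decay in the right direction.

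First I would introduce a splitting parameter $m$ with $4 \le m \le n-2$, to be optimized at the end so that $2^{m-3} \asymp (D_2^2 D_3)^{\varepsilon}$, and write via Lemma~\ref{FT-splitting-product}
$$|F_n(\alpha,\vartheta)|^2 = |F_m(\alpha 2^{n-m},\vartheta)|^2 \cdot |F_{n-m+2}(\alpha,\vartheta 2^{m-2})|^2.$$
Invoking the symmetry $|F_m(x,y)| = |F_m(y,x)|$ from Lemma~\ref{FT_product} together with Lemma~\ref{lem_maj_F_U}, I would bound the first factor by $G_{m-3}^{2/3}(6\alpha 2^{n-m})$, which depends on $(h_2,d_1,d_3)$ only through~$\alpha$. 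The second factor is retained, to be viewed as a one-dimensional exponential sum over $b \in \cB_{n-m+2}$ in its second argument, with unimodular coefficients~$e(\alpha\,\mirror{b})/|\cB_{n-m+2}|$.

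For fixed $\alpha$ and $d_3$, the sum over $(d_2,h_1)$ of the second factor is bounded by the large sieve inequality: as $d_2 \sim D_2$ and $h_1 \in (\mathbb{Z}/d_2 d_3)^*$ vary, the points $\vartheta 2^{m-2} \bmod 1$ form a sub-collection of Farey fractions of denominator at most $2D_2 d_3$, so
$$\sum_{d_2,\,h_1} |F_{n-m+2}(\alpha,\vartheta 2^{m-2})|^2 \ll 1 + \frac{D_2^2 d_3^2}{2^{n-m}}.$$
The $\tautilde(4D_2D_3)$ factor in the conclusion enters here, from folding the $d_3$-summation into this large-sieve step: a single $q \le 4 D_2 D_3$ admits at most $\tau(q) \le \tautilde(4D_2D_3)$ factorisations of the form $q = d_2 d_3$ with $d_i \sim D_i$.

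For the outer sum over $h_2 \in (\mathbb{Z}/d_1 d_3)^*$ of $G_{m-3}^{2/3}(6\alpha 2^{n-m})$, the points are $1/(d_1d_3)$-spaced modulo~$1$ (since $d_1 d_3$ is coprime to $6$, and hence to $6 \cdot 2^{n-m}$), so the generalized Sobolev--Gallagher inequality applies. Using Lemma~\ref{lem:integral-G_N} for the $L^1$ bound $\|G_{m-3}^{2/3}\|_1 \le 2^{-(m-3)\eta_0}$ and Lemma~\ref{maj_L_1_norm_G'} for the total variation, one obtains
$$\sum_{h_2} G_{m-3}^{2/3}(6\alpha 2^{n-m}) \ll d_1 d_3 \cdot 2^{-(m-3)\eta_0} + 2^{(m-3)(1-\eta_0)}.$$
Multiplying the two estimates, summing over $d_1 \sim D_1$ and $d_3 \sim D_3$, and choosing $m-3 = \lfloor \varepsilon \log_2(D_2^2 D_3) \rfloor$ balances the Sobolev--Gallagher main and error terms against each other, so that the $2^{-(m-3)\eta_0}$ contributes exactly the announced decay $(D_2^2 D_3)^{-\varepsilon \eta_0}$; hypothesis~\eqref{eq:cond_squared_D1D2D3}, with its exponent $1+\varepsilon$ on $D_3$, is calibrated precisely to ensure that $D_2^2 D_3^2/2^{n-m}$ stays dominated by the main term and that all remaining error terms fit within the bound.

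The main obstacle I expect is the entanglement of $d_3$ in both halves of the decomposition: since $\alpha = h_2/(d_1 d_3) + \ell_1/3$ and $\vartheta = -h_1/(d_2 d_3) - \ell_2/3$ both involve $d_3$, the large sieve in the $(d_2,h_1)$-direction does not commute cleanly with summation over $d_3$, which forces the use of the divisor-counting $\tautilde(4D_2D_3)$ to absorb the overcount. Verifying that the hypothesis~\eqref{eq:cond_squared_D1D2D3} is sharp enough to control every cross-term simultaneously, and in particular that the asymmetric factor $D_2/D_1$ emerges with the correct constant $\eta_0 = -\log_2 C(2/3)$, is the delicate part of the computation.
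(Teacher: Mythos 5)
Your proposal assembles the right tools (the splitting of Lemma~\ref{FT-splitting-product}, the $G$-bound of Lemma~\ref{lem_maj_F_U}, the large sieve, and Lemma~\ref{lem:sobolev-gallagher}), but it routes them in the mirrored direction, and with the stated hypothesis~\eqref{eq:cond_squared_D1D2D3} and conclusion this orientation does not close. You bound the first factor by $G_{m-3}^{2/3}(6\alpha 2^{n-m})$, a function of $\alpha$, i.e.\ of $(d_1,d_3,h_2)$, and you large-sieve the retained factor over the $\vartheta$-points indexed by $(d_2,h_1)$. Two things then go wrong. First, your large-sieve step needs $2^{n-m}\gtrsim D_2^2D_3^2$ (or $D_2^2D_3$ if you use the sharper shared-$d_3$ spacing $(8D_2^2D_3)^{-1}$ instead of the generic Farey spacing), while the decay requires $2^{m}\asymp (D_2^2D_3)^{\varepsilon}$; together this forces roughly $D_2^{2+2\varepsilon}D_3^{1+\varepsilon}\lesssim 2^{n}$, which~\eqref{eq:cond_squared_D1D2D3} does not supply when $D_2\gg D_1$ (take $D_1=D_3=1$, $\varepsilon=1$, $D_2\approx 2^{n/2}$). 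Second, your Sobolev--Gallagher step over $h_2$ (then trivially over $d_1,d_3$) produces a main term $\asymp D_1^2D_3^2\,2^{-(m-3)\eta_0}$, so after dividing by $D_1D_2D_3^2$ the asymmetric factor comes out as $D_1/D_2$, not $D_2/D_1$; when $D_1\gg D_2$ this exceeds the stated bound by a power of $D_1/D_2$, and no divisor-type factor can absorb that. Executed carefully, your argument would prove the lemma with $D_1$ and $D_2$ interchanged (and the hypothesis correspondingly swapped) --- a statement that is indeed needed, via~\eqref{symmetry_F}, in the proof of Lemma~\ref{lem:maj_R_D_l}, but it is not the statement at hand.

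The paper's proof is your plan with the orientation reversed, and the asymmetry of~\eqref{eq:cond_squared_D1D2D3} forces that choice: one bounds the short factor by $G_{n_1-3}^{2/3}(6\vartheta)$ (no symmetry swap; this is what Lemma~\ref{lem_maj_F_U} gives directly), so the decay sits on the $(d_2,d_3,h_1)$ variables with $2^{n_1}\asymp(D_2^2D_3)^{\varepsilon}$; one large-sieves the retained factor as an exponential sum in $\alpha$ over the points $h_2/(d_1d_3)+\ell_1/3$ with $d_3$ fixed, which are $(8D_1^2D_3)^{-1}$-spaced, so the required length is $2^{n_2}\asymp D_1^2D_3$ --- exactly the $D_1^2D_3$ part of~\eqref{eq:cond_squared_D1D2D3}; and one then applies Lemma~\ref{lem:sobolev-gallagher} to the joint sum over $(d_2,d_3,h_1)$ of $G_{n_1-3}^{2/3}(h_1/(d_2d_3))$. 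Note also that $\tautilde(4D_2D_3)$ does not come from the large sieve: it enters in this last joint step, as the number of factorizations $d=d_2d_3$ with $d_2\sim D_2$, $d_3\sim D_3$ of a given modulus $d$. In your nesting, with $d_3$ fixed, no such factor arises at all, and folding $d_3$ into the large sieve is not possible while keeping $\alpha$ fixed, since $\alpha$ itself depends on $d_3$.
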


\begin{proof}
  We introduce two integer parameters $n_{1},n_{2} \geq 4$ such that
  $n_{1}+n_{2} \leq n$.  Applying Lemma~\ref{FT-splitting-product}
  twice and recalling~\eqref{eq:Fn-Trivial}, we derive that there
  exists $(u,v)\in\N^2$ such that for any
  $(\alpha,\vartheta) \in \R^2$,
  \begin{equation}\label{eq:FnFn1Fn2}
    \abs{F_n(\alpha,\vartheta)}
    \leq
    \abs{F_{n_1}(\alpha2^{u},\vartheta )}
    \cdot
    \abs{F_{n_2}(\alpha ,\vartheta2^{v})}
    .
  \end{equation}
  Furthermore, by~\eqref{maj_F_G}, we have
  $$
    \abs{F_{n}\(\alpha,\vartheta\)} \leq G_{n_{1}-3}^{1/3}(6\vartheta)
    \cdot \abs{F_{n_{2}}\(\alpha,\vartheta 2^{v}\)} .
  $$
  Moreover, since $G_{n_{1}-3}$ is $1$-periodic and even, we have
  $$
    G_{n_{1}-3}\left(6\left(-\frac{h_1}{d_2d_3}
        -\frac{\ell_2 }{3}\right)\right)
    =  G_{n_{1}-3}\left(\frac{6h_1}{d_2d_3}\right)
    .
  $$  
  Hence,
  \begin{align*}
    M_2(n;\cD,\bfell) \leq \sum_{\vd \in \fD(\cD)} \sum_{(h_1, h_2)\in
      \fH(\vd)} &
    G_{n_{1}-3}^{2/3}\left(\frac{6h_1 }{d_2d_3}\right)\times\\
   & \abs{ F_{n_{2}}\left( \frac{h_2}{d_1d_3}+\frac{\ell_1}{3},
        -\frac{h_1 2^{v}}{d_2d_3}-\frac{\ell_2 2^{v}}{3} \right) }^2.
  \end{align*}
  For given $d_3$ and $\ell_1$, the points
  $$
  \frac{h_2}{d_1d_3}+\frac{\ell_1}{3},
  \quad d_1 \sim D_1,\
  0 < h_2 < d_1d_3,\ 
  \gcd(h_2, d_1d_3) =1,
  $$
  are $(8D_1^2D_3)^{-1}$-spaced modulo~$1$.  By summing over $d_1$ and
  $h_2$, we obtain by the large sieve inequality (see for
  instance~\cite{montgomery-1978}),
  $$
    M_2(n;\cD,\bfell)
    \ll\sum_{\substack{{d_2\sim D_2}\\ {d_3\sim D_3}\\ {\gcd(d_2d_3,6)=1}}}
    \sum_{\substack{ {0<h_1<d_2d_3}\\{\gcd(h_1,d_2d_3)=1}}}
    G_{n_{1}-3}^{2/3} \left(\frac{6h_1}{d_2d_3}\right)
    (D_1^2 D_32^{-n_{2}} + 1).
  $$
  Since $\gcd(d_2d_3,6)=1$, we have by a change of variable,
  $$
    M_2(n;\cD,\bfell)
    \ll
    (D_1^2 D_32^{-n_{2}} + 1)
    \sum_{\substack{{d_2\sim D_2}\\ {d_3\sim D_3}\\ {\gcd(d_2d_3,6)=1}}}
    \sum_{\substack{ {0<h_1<d_2d_3}\\{\gcd(h_1,d_2d_3)=1}}}
    G_{n_{1}-3}^{2/3}\left(\frac{h_1}{d_2d_3}\right)
    .
  $$
  Writing
  $$
    \tau(d;D_2,D_3)
    =
    \abs{\{
      (d_2,d_3):d_2\sim D_2, d_3\sim D_3,
      d=d_2d_3
      \}},
  $$
  we have
  \begin{align*}
    M_2(&n;\cD,\bfell)\\
    &\ll
    \left( D_1^2 D_3 2^{-n_{2}} + 1\right) 
    \sum_{d\in [D_2 D_3,4 D_2 D_3)} 
    \tau(d;D_2,D_3)
    \sum_{\substack{ {0<h_1<d}\\ {\gcd(h_1,d)=1}}}
    G_{n_{1}-3}^{2/3}\left(\frac{h_1}{d} \right)
    ,
  \end{align*}
  and observing that by~\eqref{eq:tau tautilde} we have
  $$
    \tau(d;D_2,D_3)  \leq \tau(d) \leq \tautilde(d) \leq \tautilde(4D_2D_3)  ,
  $$
  we immediately derive
  \begin{align*}
    M_2(&n;\cD,\bfell)\\
    &\ll
    \tautilde(4D_2D_3) 
    \left( D_1^2 D_3 2^{-n_{2}} + 1\right)
    \sum_{d\in [D_2 D_3,4 D_2 D_3)}
    \sum_{\substack{ {0<h_1<d}\\ {\gcd(h_1,d)=1}}}
    G_{n_{1}-3}^{2/3}\left(\frac{h_1}{d} \right)  .
  \end{align*}
  By Lemma~\ref{lem:integral-G_N}, we have
  $$
    \norm{G_{n_{1}-3}^{2/3}}_1 \ll C(2/3)^{n_{1}} = 2^{-n_{1}\eta_0}
  $$
  and by Lemma~\ref{maj_L_1_norm_G'}, the variation of
  $G_{n_{1}-3}^{2/3}$ on $[0,1]$ is
  $$
    \Var_{G_{n_{1}-3}^{2/3}}
    \ll
    2^{n_{1}} \norm{G_{n_{1}-3}^{2/3}}_1.
  $$
  Since the points $h_1/d$ are $(16 D_2^2 D_3^2)^{-1}$-spaced modulo
  $1$, it follows from Lemma~\ref{lem:sobolev-gallagher} that
  $$
    M_2(n;\cD,\bfell)
    \ll 
    \tautilde(4D_2D_3)
    \left( D_1^2 D_3 2^{-n_{2}} + 1\right)
    \left( D_2^2 D_3^2 + 2^{n_{1}}\right)
    2^{-n_{1}\eta_0}
    .
  $$
  We choose for $n_1$ and $n_2$ the unique integers such that 
  $$
    2^{n_1-5} < (D_2^2D_3)^{\varepsilon} \leq 2^{n_1-4},
    \quad
    2^{n_2-5} <  \ D_1^2D_3 \leq 2^{n_2-4}
    .
  $$
  Since $D_1, D_2, D_3 \geq 1$ we have $n_1\geq 4$, $n_2\geq 4$ and
  by~\eqref{eq:cond_squared_D1D2D3} we have $n_1 + n_2 \leq n$. Since
  $\varepsilon \in \left(0,1\right]$, this leads to
  $$
    M_2(n;\cD,\bfell)
    \ll
    \tautilde(4D_2D_3)
    D_2^2 D_3^2
    \left(D_2^2D_3\right)^{-\varepsilon\eta_0}
  $$
  and completes the proof.
\end{proof}

We are now able to bound $M_{1}(n;\cD,\bfell)$.

\begin{lemma}\label{lem:maj_R_D_l}
  For any $n\ge 22$, any
  $\cD = (D_1,D_2,D_3) \in \left[1,+\infty\right)^3$ and any
  $\varepsilon \in \left(0,1\right]$ such that
  \begin{equation}\label{eq:condD1D2D3}
    (D_1D_2D_3)^{2(1+\varepsilon)} \leq 2^{n-22},
  \end{equation}
  we have for any $\bfell=(\ell_1,\ell_2)\in\Z^2$,
  $$
    \frac{M_1(n;\cD,\bfell)}{D_1D_2D_3^2}
    \ll
    \tautilde(4D_1D_3)^{1/2} \tautilde(4D_2D_3)^{1/2}
    \left(D_1D_2D_3\right)^{-\varepsilon \eta_0}
  $$
  where the implicit constant is absolute.
\end{lemma}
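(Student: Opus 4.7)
The plan is to reduce $M_1$ to $M_2$ via the Cauchy--Schwarz inequality and then to apply Lemma~\ref{lem_maj__R_D_l_squared} twice, exploiting the symmetry $|F_n(\alpha,\vartheta)|=|F_n(\vartheta,\alpha)|$ from Lemma~\ref{FT_product}.

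First, by the Cauchy--Schwarz inequality,
\[
  M_1(n;\cD,\bfell) \leq N(\cD)^{1/2}\, M_2(n;\cD,\bfell)^{1/2},
\]
where $N(\cD) = \sum_{\vd\in\fD(\cD)} |\fH(\vd)|$ counts the summands. Since $|\fH(\vd)| \leq \phi(d_1 d_3)\phi(d_2 d_3)$, a direct estimate gives $N(\cD) \ll D_1^2 D_2^2 D_3^3$.

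Second, the quantity $M_2$ is invariant, up to relabelling of indices, under swapping $(d_1,h_2) \leftrightarrow (d_2,h_1)$, thanks to the $F_n$-symmetry provided by Lemma~\ref{FT_product}. Re-running the proof of Lemma~\ref{lem_maj__R_D_l_squared} with the large sieve applied on the $\vartheta$-direction (over $d_2, h_1$) rather than on the $\alpha$-direction therefore produces a $(D_1\leftrightarrow D_2)$-twin bound with $\tautilde(4D_1D_3)$ and the factor $(D_1^2D_3)^{-\varepsilon\eta_0}$ in place of $\tautilde(4D_2D_3)$ and $(D_2^2D_3)^{-\varepsilon\eta_0}$. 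The hypothesis $(D_1D_2D_3)^{2(1+\varepsilon)} \leq 2^{n-22}$ is tailored so that both $D_1^2 D_2^{2\varepsilon} D_3^{1+\varepsilon}\leq 2^{n-10}$ and its twin $D_2^2 D_1^{2\varepsilon} D_3^{1+\varepsilon}\leq 2^{n-10}$ hold, and hence both asymmetric bounds apply with the same $\varepsilon$. Multiplying them and taking a square root delivers the symmetric $M_2$-estimate
\[
  \frac{M_2(n;\cD,\bfell)}{D_1D_2D_3^2} \ll \tautilde(4D_1D_3)^{1/2}\,\tautilde(4D_2D_3)^{1/2}\,(D_1D_2D_3)^{-\varepsilon\eta_0}.
\]

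Substituting this into the Cauchy--Schwarz bound should yield the lemma. The main obstacle is that bare Cauchy--Schwarz introduces a loss factor $(D_1D_2D_3)^{1/2}$ in the normalised quantity and only yields the half-powers $\tautilde^{1/4}\tautilde^{1/4}$ and the exponent $-\varepsilon\eta_0/2$ instead of $-\varepsilon\eta_0$. Closing this gap requires strengthening the intermediate estimates: I would redo the proof of Lemma~\ref{lem_maj__R_D_l_squared} directly for $|F_n|$ (not $|F_n|^2$), using the splitting of Lemma~\ref{FT-splitting-product}, the pointwise bound $|F_n|\leq G_{n_1-3}^{1/3}(6\vartheta)$ of Lemma~\ref{lem_maj_F_U} (applied on both sides via the symmetry), and the $G_N^{1/3}$-integral bounds from Lemma~\ref{lem:integral-G_N} and Lemma~\ref{maj_L_1_norm_G'}. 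The strengthened hypothesis~\eqref{eq:condD1D2D3}, which roughly squares the condition of Lemma~\ref{lem_maj__R_D_l_squared}, provides room for a larger splitting parameter $n_1 + n_2$, and it is this extra room that allows the Sobolev--Gallagher savings to absorb the Cauchy--Schwarz overhead on both sides and upgrade the $\tautilde$ and $(D_1D_2D_3)$-exponents to the sharp values demanded by the claim.
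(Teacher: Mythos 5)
Your reduction of $M_1$ to $M_2$ is not the one that works, and the gap you acknowledge at the end is fatal rather than technical. Applying Cauchy--Schwarz with the counting weight, $M_1 \leq N(\cD)^{1/2}M_2^{1/2}$, loses a factor of order $(D_1D_2D_3)^{1/2}$ in the normalised quantity and halves the exponents $\varepsilon\eta_0$ and the powers of $\tautilde$; since $\eta_0\approx 0.073$ is tiny, the resulting bound is worse than trivial and no choice of parameters recovers the claim. Your proposed repair --- rerunning the proof of Lemma~\ref{lem_maj__R_D_l_squared} directly for $\abs{F_n}$ rather than $\abs{F_n}^2$ --- does not go through as described: the large sieve step in that proof is intrinsically an inequality about mean squares over well-spaced points $h_2/(d_1d_3)+\ell_1/3$, and it is exactly what controls the sum over $(d_1,h_2)$; the pointwise bound $\abs{F_n}\leq G_{n_1-3}^{1/3}(6\vartheta)$ depends only on $\vartheta=-h_1/(d_2d_3)-\ell_2/3$, so without the large sieve the $(d_1,h_2)$-sum can only be estimated trivially, costing a factor $\asymp D_1^2D_3$, which is again far too much. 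The extra room in hypothesis~\eqref{eq:condD1D2D3} cannot absorb these losses.

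The missing idea is to apply Cauchy--Schwarz \emph{after} splitting $F_n$ into a product, not with the trivial weight. The paper first uses Lemma~\ref{FT-splitting-product} (twice) to get, for suitable $u,v$ and any $n_1+n_2\leq n$,
$$
\abs{F_n(\alpha,\vartheta)}\leq \abs{F_{n_1}(\alpha 2^{u},\vartheta)}\cdot\abs{F_{n_2}(\alpha,\vartheta 2^{v})},
$$
and only then applies Cauchy--Schwarz, so that
$$
M_1(n;\cD,\bfell)^2\leq \prod_{k=1}^2\ \sum_{\vd\in\fD(\cD)}\ \sum_{(h_1,h_2)\in\fH(\vd)}\abs{F_{n_k}(\cdots)}^2 .
$$
No counting factor $N(\cD)^{1/2}$ appears: each Cauchy--Schwarz factor is itself an $M_2$-quantity, at the reduced length $n_k$, after removing the powers of $2$ by a change of variables (licit since $\gcd(2,d_1d_2d_3)=1$). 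Choosing $n_1,n_2$ with $2^{n_1}\asymp 2^{10}D_1^2D_2^{2\varepsilon}D_3^{1+\varepsilon}$ and $2^{n_2}\asymp 2^{10}D_2^2D_1^{2\varepsilon}D_3^{1+\varepsilon}$, hypothesis~\eqref{eq:condD1D2D3} guarantees $n_1+n_2\leq n$ --- this is its real purpose, which your scheme never exploits. One then applies Lemma~\ref{lem_maj__R_D_l_squared} to the first factor with $\cD$, and to the second with $\widetilde\cD=(D_2,D_1,D_3)$ via the symmetry~\eqref{symmetry_F} (the part of your proposal that is on the right track); the asymmetric factors $D_2/D_1$ and $D_1/D_2$ cancel upon multiplication, and taking a square root gives exactly the stated bound with full exponents $-\varepsilon\eta_0$ and $\tautilde(4D_1D_3)^{1/2}\tautilde(4D_2D_3)^{1/2}$.
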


\begin{proof}
  Let $n_1,n_2 \geq 10$ such that $n_1+n_2 \leq n$. 
  Similarly to~\eqref{eq:FnFn1Fn2}, we see that there exists
  $(u,v) \in\N^2$ such that for any $(\alpha,\vartheta) \in \R^2$,
  $$
    \abs{F_n(\alpha,\vartheta)}
    \leq
    \abs{F_{n_1}(\alpha2^{u},\vartheta )}
    \cdot
    \abs{F_{n_2}(\alpha ,\vartheta2^{v})}
    .
  $$
  It is convenient to define
  $$
  (u_1,v_1) = (u,0) \mand (u_2,v_2) = (0,v) .
  $$
  Applying the Cauchy--Schwarz inequality, we get
  \begin{align*}
    &  M_1(n;\cD,\bfell)^2\\
    & \qquad     \leq 
      \prod_{k=1}^2
      \sum_{\vd  \in \fD(\cD)}
      \sum_{(h_1, h_2)\in  \fH(\vd)}
      \abs{
      F_{n_k}\left(
      \frac{h_2 2^{u_k}}{d_1d_3}+\frac{\ell_12^{u_k}}{3},
      -\frac{h_1 2^{v_k}}{d_2d_3}-\frac{\ell_2 2^{v_k}}{3}
      \right)
      }^2
      .
  \end{align*}
  The maps $\alpha \mapsto F_{n_k}(\alpha,\vartheta)$ and
  $\vartheta \mapsto F_{n_k}(\alpha,\vartheta)$ are $1$-periodic, and
  since
  $$
    \gcd(2^{u_k},d_1d_3)=\gcd(2^{v_k},d_2d_3)=1,
  $$
  the integer $h_1 2^{v_k}$ runs over all residue classes modulo
  $d_2d_3$ coprime with $d_2d_3$ and $h_2 2^{u_k}$ runs over all
  residue classes modulo $d_1d_3$ coprime with $d_1d_3$. Therefore, by
  a change of variables we obtain
  \begin{align*}
    & M_1(n;\cD,\bfell)^2\\
    &\qquad \leq 
      \prod_{k=1}^2
      \sum_{\vd  \in \fD(\cD)}
      \sum_{(h_1, h_2)\in  \fH(\vd)}
      \abs{
      F_{n_k}\left(
      \frac{h_2}{d_1d_3}+\frac{\ell_12^{u_k}}{3},
      -\frac{h_1}{d_2d_3}-\frac{\ell_2 2^{v_k}}{3}
      \right)
      }^2\\
    &\qquad\quad  = \prod_{k=1}^2
      M_2(n_k;\cD,\bfell_k),
  \end{align*}
  where $\bfell_k = (\ell_12^{u_k},\ell_22^{v_k})$.  We choose for
  $n_1$ and $n_2$ the unique integers such that
  $$
    2^{n_1-11} < D_1^2D_2^{2\varepsilon}D_3^{1+\varepsilon} \leq
    2^{n_1-10}, \quad 2^{n_2-10} <
    D_1^{2\varepsilon}D_2^2D_3^{1+\varepsilon} \leq 2^{n_2-10} .
  $$
  Since $D_1, D_2, D_3 \geq 1$ we have $n_1\geq 10$, $n_2\geq 10$ and
  by~\eqref{eq:condD1D2D3} we have $n_1 + n_2 \leq n$.  By applying
  Lemma~\ref{lem_maj__R_D_l_squared} with $n$ replaced by $n_1$, we
  obtain
  $$
    \frac{M_2(n_1;\cD,\bfell_1)}{D_1D_2D_3^2}
    \ll
    \tautilde(4D_2D_3) 
    \frac{D_2}{D_1} \left(D_2^{2} D_3\right)^{-\varepsilon\eta_0}.
  $$
  To bound $M_2(n_2;\cD,\bfell_2)$, we first note that
  by~\eqref{symmetry_F}, we have
  $$
    M_2(n_2;\cD,\bfell_2) = M_2(n_2;\widetilde \cD, \widetilde \bfell_2), 
  $$
  where $\widetilde \cD=(D_2,D_1,D_3)$ and
  $\widetilde \bfell_2=(\ell_22^{v_2},\ell_12^{u_2})$. Next, by applying
  Lemma~\ref{lem_maj__R_D_l_squared} with $n$, $\cD$ and $\bfell$
  replaced by $n_2$, $\widetilde \cD$ and $\widetilde \bfell_2$, we get
  $$
    \frac{M_2(n_2;\cD,\bfell_2)}{D_1D_2D_3^2}
    \ll
    \tautilde(4D_1D_3) 
    \frac{D_1}{D_2} \left(D_1^{2} D_3\right)^{-\varepsilon\eta_0}.
  $$
  It follows that
  $$
    \frac{M_1(n;\cD,\bfell)^2}{(D_1D_2D_3^2)^2}
    \ll
    \tautilde(4D_1D_3) \tautilde(4D_2D_3)  
    \left(D_1D_2 D_3\right)^{-2\varepsilon\eta_0},
  $$
  as desired.
\end{proof}

\section{Study of $H(d,h_1,h_2)$}\label{sec_H}
We recall that, by~\eqref{def_H}, for any integer $d\geq 1$ and
$(h_1,h_2)\in \Z^2$,
$$
  H(d,h_1,h_2) = \sum_{\substack{{0\le u,v< d}\\ {d \dv uv}}}
  \e{ \frac{h_1u+h_2v}{d}}.
$$

\begin{lemma}\label{H}
  For any $(h_1,h_2)\in \Z^2$ the function $d \mapsto H(d,h_1,h_2)$ is
  multiplicative and for any prime number $p$, we have
  $$
    H(p,h_1,h_2) = p\mathbf{1}_{p \dv h_1}+p\mathbf{1}_{p \dv h_2}-1.
  $$
\end{lemma}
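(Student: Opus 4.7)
The plan is to prove both assertions by direct manipulation of the defining sum.

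For the prime case, I will exploit the fact that when $d=p$ is prime, the divisibility condition $p\mid uv$ with $0\le u,v<p$ is equivalent to $u=0$ or $v=0$. Splitting the sum according to these two overlapping cases, each contributes a complete geometric sum in the free variable, namely $\sum_{v=0}^{p-1}\e{h_2v/p}=p\,\mathbf{1}_{p\mid h_2}$ and $\sum_{u=0}^{p-1}\e{h_1u/p}=p\,\mathbf{1}_{p\mid h_1}$. Since the pair $(u,v)=(0,0)$ is counted twice and contributes~$1$, inclusion--exclusion yields exactly $H(p,h_1,h_2)=p\,\mathbf{1}_{p\mid h_1}+p\,\mathbf{1}_{p\mid h_2}-1$. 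This part is essentially immediate.

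For the multiplicativity, let $d=d_1d_2$ with $\gcd(d_1,d_2)=1$ and use the Chinese Remainder Theorem to parametrize $u$ and $v$. More precisely, pick integers $\alpha,\beta$ with $\alpha d_2\equiv 1\pmod{d_1}$ and $\beta d_1\equiv 1\pmod{d_2}$; then every $u\in\{0,\ldots,d-1\}$ corresponds bijectively to a pair $(u_1,u_2)\in\{0,\ldots,d_1-1\}\times\{0,\ldots,d_2-1\}$ via $u\equiv u_1\alpha d_2+u_2\beta d_1\pmod{d}$, and similarly for $v$. Under this correspondence, $d\mid uv$ if and only if both $d_1\mid u_1v_1$ and $d_2\mid u_2v_2$, and the phase splits as
$$
\e{\frac{h_1u+h_2v}{d}}=\e{\frac{h_1\alpha u_1+h_2\alpha v_1}{d_1}}\cdot\e{\frac{h_1\beta u_2+h_2\beta v_2}{d_2}}.
$$
Since $\gcd(\alpha,d_1)=\gcd(\beta,d_2)=1$, the substitutions $u_1'\equiv \alpha u_1\pmod{d_1}$ and $u_2'\equiv \beta u_2\pmod{d_2}$ (and similarly for $v_1,v_2$) are bijections that preserve the divisibility conditions, so the sum factors as $H(d_1,h_1,h_2)\cdot H(d_2,h_1,h_2)$.

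I do not expect any genuine obstacle here; both assertions follow from standard CRT manipulations and orthogonality of additive characters, and the proof should fit in a few lines.
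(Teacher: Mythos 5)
Your proposal is correct and follows essentially the same route as the paper: the prime case by splitting $p\mid uv$ into $u=0$ or $v=0$ and evaluating the two geometric sums with inclusion--exclusion, and multiplicativity by a CRT parametrization of $u,v$ modulo $d_1d_2$. The only cosmetic difference is that the paper parametrizes via $u=d_2u_1+d_1u_2$, which makes the phase split as $\tfrac{h_1u_1}{d_1}+\tfrac{h_1u_2}{d_2}$ directly and avoids your final change of variables removing the inverses $\alpha,\beta$.
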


\begin{proof}
  For $d_1\geq 1$ and $d_2\geq 1$ with $\gcd(d_1,d_2)=1$ the summation
  over $u$ with $0\leq u < d=d_1 d_2$ may be replaced by
  $d_2 u_1 + d_1 u_2$ with $0\leq u_1 < d_1$, $0\leq u_2 < d_2$ and
  similarly for $v$. We have
  \begin{align*}
    \frac{h_1 u}{d}
    &\equiv
      \frac{h_1 (d_2 u_1 + d_1 u_2)}{d_1 d_2}
      \equiv
      \frac{h_1 u_1}{d_1} + \frac{h_1 u_2}{d_2}
      \bmod 1,
    \\
    \frac{h_2 v}{d}
    &\equiv
      \frac{h_2 (d_2 v_1 + d_1 v_2)}{d_1 d_2}
      \equiv
      \frac{h_2 v_1}{d_1} + \frac{h_2 v_2}{d_2}
      \bmod 1, 
  \end{align*}
  and also
  \begin{align*}
    u v
    &\equiv
      (d_2 u_1 + d_1 u_2) (d_2 v_1 + d_1 v_2)
      \equiv d_2^2 u_1 v_1 \bmod d_1, 
    \\
    u v
    &\equiv
      (d_2 u_1 + d_1 u_2) (d_2 v_1 + d_1 v_2)
      \equiv d_1^2 u_2 v_2 \bmod d_2
      .
  \end{align*}
  Since $\gcd(d_1,d_2)=1$, the condition $d_1d_2\dv uv$, may be
  replaced by $d_1 \dv u_1 v_1$ and $d_2 \dv u_2 v_2$.  This leads to
  \begin{align*}
    & H(d_1 d_2,h_1,h_2)\\
    & \qquad \qquad =
      \sum_{\substack{0\leq u_1, v_1<d_1\\ d_1 \dv u_1 v_1}} 
    \e{ \frac{h_1 u_1 + h_2 v_1}{d_1}}
    \sum_{\substack{0\leq u_2, v_2<d_2\\ d_2 \dv u_2 v_2}} 
    \e{ \frac{h_1 u_2 + h_2 v_2 }{d_2}},
    \\
    &\qquad \qquad =
      H(d_1,h_1,h_2) \, H(d_2,h_1,h_2)
      ,
  \end{align*}
  which shows that $d \mapsto H(d,h_1,h_2)$ is multiplicative.

  If $d=p$ is a prime number then $p \dv uv$ means $p \dv u$ or
  $p \dv v$, namely $u=0$ or $v=0$, hence
  \begin{align*}
    H(p,h_1,h_2) &
                   =
                   \sum_{0\leq v < p} \e{\frac{h_2 v}{p}}
                   +
                   \sum_{0\leq u < p} \e{\frac{h_1 u}{p}}
                   - 1\\
                 &  =
                   p\mathbf{1}_{p \dv h_1}+p\mathbf{1}_{p \dv h_2}-1, 
  \end{align*}
  as claimed. 
\end{proof}

We now recall the definition~\eqref{eq:def f} of the multiplicative
function $f(d)$.

\begin{lemma}\label{lem_H_f}
  For any squarefree integer $d\geq 1$ such that $\gcd(d,6)=1$, we
  have
  $$
    H(d,0,0)=df(d).
  $$
\end{lemma}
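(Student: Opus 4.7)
The plan is to exploit the multiplicativity of $d\mapsto H(d,h_1,h_2)$ established in Lemma~\ref{H} together with the multiplicativity of $f$ from~\eqref{eq:def f}, and to reduce the identity $H(d,0,0) = df(d)$ to a check at individual primes.

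First, I would observe that for squarefree $d$ with $\gcd(d,6)=1$, one can write $d = \prod_{p\mid d} p$ where every $p\mid d$ satisfies $p\geq 5$. By Lemma~\ref{H}, $d\mapsto H(d,0,0)$ is multiplicative, so
$$
H(d,0,0) = \prod_{p\mid d} H(p,0,0).
$$
Similarly, since $f$ is multiplicative and $d$ is squarefree, $df(d) = \prod_{p\mid d} p\,f(p)$.

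Next, I would carry out the prime check. Specializing the prime formula from Lemma~\ref{H} to $h_1=h_2=0$, both indicator functions $\mathbf{1}_{p\mid 0}$ equal $1$, giving
$$
H(p,0,0) = p + p - 1 = 2p-1.
$$
On the other hand, for $p\geq 5$ we have $f(p) = (2p-1)/p$ by~\eqref{eq:def f}, so $p\,f(p) = 2p-1$.

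Thus the factors agree prime by prime, and taking the product over $p\mid d$ yields $H(d,0,0) = \prod_{p\mid d}(2p-1) = df(d)$, as desired. I do not expect any real obstacle: the entire argument is a direct consequence of Lemma~\ref{H} combined with the definition of $f$ at primes $p\geq 5$, and the assumption $\gcd(d,6)=1$ is used precisely to ensure that only primes $p\geq 5$ appear, where the simple formula $f(p)=(2p-1)/p$ (rather than the exceptional values $f(2)=0$ or $f(3)=1$) applies.
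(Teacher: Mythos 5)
Your proof is correct and follows essentially the same route as the paper: both arguments specialize the prime formula of Lemma~\ref{H} at $h_1=h_2=0$ to get $H(p,0,0)=2p-1=pf(p)$ for $p\geq 5$, and then conclude by multiplicativity of $H(\cdot,0,0)$ and $f$, with $\gcd(d,6)=1$ ensuring only primes $p\geq 5$ occur. No issues.
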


\begin{proof}
  For any prime number $p\geq 5$, we have
  $$
    H(p,0,0)=2p-1 = pf(p), 
  $$
  and the result now follows from the multiplicativity of the
  functions $H(d,0,0)$ and $f(d)$.
\end{proof}

\section{Average order of useful multiplicative functions}

We use the following upper bounds.
\begin{lemma}\label{lem_maj_average_omega}
  For any real numbers $z>0$ and $x\geq 2$, we have
  \begin{equation}
    \label{maj_sum_omega_over_n}
    \sum_{n\leq x} \frac{\mu^2(n)z^{\omega(n)}}{n} \ll_z (\log x)^z
  \end{equation}
  and
  \begin{equation}
    \label{maj_sum_omega}
    \sum_{n\leq x} \mu^2(n)z^{\omega(n)} \ll_z x(\log x)^{z-1}.
  \end{equation}
\end{lemma}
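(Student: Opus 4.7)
The plan is to prove (4.1) by a direct Euler-product estimate and then deduce (4.2) from it using a classical mean-value theorem for non-negative multiplicative functions.

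For (4.1), I would exploit that $f(n):=\mu^2(n)z^{\omega(n)}$ is non-negative, multiplicative, and supported on squarefree integers. Majorising the partial sum by the corresponding truncated Euler product and using $1+t\leq e^t$ yields
\[
\sum_{n\leq x}\frac{\mu^2(n)z^{\omega(n)}}{n}
\leq \prod_{p\leq x}\Bigl(1+\frac{z}{p}\Bigr)
\leq \exp\Bigl(z\sum_{p\leq x}\frac{1}{p}\Bigr)
\ll_z (\log x)^z,
\]
the last step being Mertens' theorem, as already invoked elsewhere in the paper via \cite[Part~I, Theorem~1.12]{Ten15}.

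For (4.2), my plan is to appeal to Shiu's mean-value theorem (see, e.g., \cite[Part~III]{Ten15}), which states that for any non-negative multiplicative $f$ with uniformly bounded values at prime powers one has
\[
\sum_{n\leq x} f(n) \ll \frac{x}{\log x}\exp\Bigl(\sum_{p\leq x}\frac{f(p)}{p}\Bigr).
\]
Specialising to $f(n)=\mu^2(n)z^{\omega(n)}$, where $f(p)=z$ and $f(p^k)=0$ for $k\geq 2$, and invoking Mertens' theorem once more yields
\[
\sum_{n\leq x}\mu^2(n)z^{\omega(n)}
\ll_z \frac{x}{\log x}\,(\log x)^{z}
= x(\log x)^{z-1},
\]
as required.

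The main subtlety I anticipate lies in the range $0<z<1$: a purely elementary approach via partial summation from (4.1) would give only $\sum_{n\leq x}f(n)\log n\ll_z x(\log x)^{z}$, which recovers $S(x)\ll_z x(\log x)^{z-1}$ on a dyadic tail $\sqrt{x}\leq n\leq x$ but not uniformly down to small $n$. Shiu's theorem encapsulates precisely the extra factor $1/\log x$ needed to close this gap, and handles every $z>0$ uniformly in a single step. For the applications of this paper only integer $z$ is actually needed (indeed $z=4$ arising from the $4^{\omega(d)}$ weights in the sieve expansion), and in that range one can alternatively derive (4.2) from (4.1) by Dirichlet's hyperbola method applied to a suitable Möbius-inversion identity, avoiding any external input.
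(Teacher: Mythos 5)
Your proof of \eqref{maj_sum_omega_over_n} is exactly the paper's: bound the sum by the truncated Euler product $\prod_{p\leq x}(1+z/p)$ and apply Mertens. For \eqref{maj_sum_omega}, however, you take a genuinely different route. The paper gives a short self-contained argument: it writes $(\log x)\sum_{n\leq x}\mu^2(n)z^{\omega(n)}=S_1(x)+S_2(x)$ with the weights $\log n$ and $\log(x/n)$, bounds $S_1$ via $\mu^2(n)\log n=\mu^2(n)\sum_{p\mid n}\log p$ and Chebyshev, bounds $S_2$ via $\log(x/n)\leq x/n$, and so reduces \eqref{maj_sum_omega} to \eqref{maj_sum_omega_over_n}; this works uniformly for every $z>0$. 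You instead invoke the Halberstam--Richert/Shiu mean-value inequality as a black box, which is perfectly legitimate here (the hypotheses hold for $f(n)=\mu^2(n)z^{\omega(n)}$, with implied constants allowed to depend on $z$), and indeed the paper itself remarks that stronger results are available in Montgomery--Vaughan and Tenenbaum. What your route buys is brevity and generality; what the paper's buys is self-containedness, using nothing beyond Mertens and Chebyshev. One small correction: the ``subtlety'' you anticipate for $0<z<1$ is not a real obstruction to the elementary approach --- the splitting $\log x=\log n+\log(x/n)$ is precisely the standard device that handles the small-$n$ range for all $z>0$, and it is what the paper does; no appeal to Shiu is needed to close that gap. (Also, the paper uses the lemma with $z=4$ and $z=8$, both covered either way.)
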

Stronger results may be obtained
by~\cite[Corollary~2.15]{montgomery-vaughan-2007}
and~\cite[Chapter~II, Theorem~6.1]{Ten15}.

\begin{proof}
  Since $n\mapsto \mu^2(n)z^{\omega(n)}$ is multiplicative with
  non negative values, we have
  $$
    \sum_{n\leq x} \frac{\mu^2(n)z^{\omega(n)}}{n}
    \leq
    \prod_{p\leq x} \left(1+\frac{\mu^2(p)z^{\omega(p)}}{p} \right)
    = \prod_{p\leq x} \left(1 + \frac{z}{p} \right)
  $$
  and by Mertens formula,
  $$
    \sum_{p\leq x} \log\left(1 + \frac{z}{p} \right)
    \leq
    z \sum_{p\leq x}\frac{1}{p}
    =
    z \left(\log \log x +O(1)\right).
  $$
  This shows~\eqref{maj_sum_omega_over_n}.  In order to
  prove~\eqref{maj_sum_omega}, we first write
  $$
    (\log x)\sum_{n\leq x} \mu^2(n)z^{\omega(n)}
    =
     S_1(x)
    +
     S_2(x)
  $$
  where
  $$
    S_1(x) = \sum_{n\leq x} \mu^2(n)z^{\omega(n)}\log n
    \quad
    \mbox{ and }
    \quad
    S_2(x) = \sum_{n\leq x} \mu^2(n)z^{\omega(n)}\log \frac{x}{n}.
  $$
  Since $\mu^2(n)\log n = \mu^2(n)\sum_{p \dv n} \log p$ and
  $\sum_{p\leq X} \log p \ll X$, we obtain
  \begin{align*}
    S_1(x)&= \sum_{p\leq x} \log p \sum_{m\leq x/p} \mu^2(mp)z^{\omega(mp)}
            =
            \sum_{p\leq x} \log p \sum_{\substack{m\leq x/p\\p\nmid m}} \mu^2(m)z^{1+\omega(m)}\\
          &\leq
            \sum_{m\leq x} \mu^2(m)z^{1+\omega(m)} \sum_{p\leq x/m} \log p
            \ll_{z}
            x \sum_{m\leq x} \frac{\mu^2(m)z^{\omega(m)}}{m}.
  \end{align*}
  Moreover since $\log \frac{x}{n} \leq \frac{x}{n}$, the same bound
  also holds for $S_2(x)$.  By~\eqref{maj_sum_omega_over_n}, it
  follows that
  \begin{align*}
    \sum_{n\leq x} \mu^2(n)z^{\omega(n)}
    \ll_z
    \frac{x}{\log x} \sum_{n\leq x} \frac{\mu^2(n)z^{\omega(n)}}{n}
      \ll_z x (\log x)^{z-1}
  \end{align*}
  which establishes~\eqref{maj_sum_omega}.
\end{proof}

\section{Preliminary study of $R_n(d)$}\label{sec:prel-study-r_nd}

We recall that $R_n(d)$ is defined by~\eqref{eq: def Rd} together
with~\eqref{eq:def Td}.  It is convenient to define for
$j \in \{0,1\}$ and $d\geq 1$:
\begin{equation}\label{Rtilde}
  \widetilde{R}_n(d,j)  = \frac{| \Set|}{3^jd^2}
  \sum_{\substack{0< h_1,h_2 <d}} 
  \overline{H(d,h_1,h_2)} 
  \sum_{0\leq \ell <3^j}
  F_n\left(\frac{h_2}{d},-\left(\frac{h_1}{d}+\frac{\ell}{3^j}\right)\right).
\end{equation}
We note that for $j \in \{0,1\}$, $\widetilde{R}_n(1,j)= 0$.

\begin{lemma}\label{Rd}
  For any squarefree integer $d\geq 1$ such that $\gcd(d,6)=1$ and any
  $j\in\{0,1\}$, we have
  \begin{align*}
    R_n(3^jd) =
    \widetilde{R}_n(d,j) + O\left( f(d)\right).
  \end{align*}
\end{lemma}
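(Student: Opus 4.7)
My plan is to expand $T_n(3^j d)$ as a double Fourier sum via additive character detection. Factoring the condition $d \dv a\mirror a$ through residues $0 \le u,v < d$ with $a \equiv u$, $\mirror a \equiv v \pmod d$, and $d \dv uv$, detecting each congruence by $\tfrac{1}{d}\sum_{h\bmod d}\e{h\cdot/d}$, and (when $j=1$) detecting $3 \dv a$ via $\tfrac{1}{3}\sum_\ell \e{\ell a/3}$ thanks to \eqref{congr_mod_3}, the inner sum over $(u,v)$ becomes $\overline{H(d,h_1,h_2)}$ and the inner sum over $a$ becomes $|\Set|F_n(h_2/d,\,-h_1/d-\ell/3^j)$, so a direct rearrangement produces
$$
T_n(3^j d) = \frac{|\Set|}{3^j d^2}\sum_{\ell=0}^{3^j-1}\sum_{h_1,h_2\bmod d}\overline{H(d,h_1,h_2)}\,F_n\!\left(\frac{h_2}{d},\,-\frac{h_1}{d}-\frac{\ell}{3^j}\right).
$$
The ``bulk'' part $0 < h_1, h_2 < d$ is then exactly $\widetilde R_n(d,j)$, so the task reduces to showing that the ``corner and edge'' piece ($h_1 = 0$ or $h_2 = 0$) equals $\tfrac{f(d)}{3^j d}|\Set| + O(f(d))$, since $f(3^j d)=f(d)$.

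I would handle the non-bulk pieces by evaluating, for each of the two total edges $h_i = 0$ (summed over all values of the other variable, including the corner), the resulting single $h$-summation in closed form—thus avoiding the dangerous pointwise bound $|H(d,h,0)| \asymp df(d)$ that follows from Lemma~\ref{H}. For the edge $h_2 = 0$, I expand $F_n$ as an average over $a\in\Set$, swap the order of summation, and apply the orthogonality $\sum_{h_1\bmod d}\e{h_1(a-u)/d} = d\,\mathbf{1}[a\equiv u\pmod d]$; after counting residues $v$ with $d \dv u v$, this collapses the edge to
$$
\frac{1}{d}\sum_{\substack{a\in\Set\\ 3^j\dv a}}\gcd(d,a).
$$
The symmetric edge $h_1 = 0$ yields $\tfrac{1}{d}\sum_{a\in\Set,\,3^j\dv a}\gcd(d,\mirror a)$, which equals the same quantity since $a \mapsto \mirror a$ is an involution on $\{a \in \Set : 3^j \dv a\}$. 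Unfolding $\gcd(d,a) = \sum_{t\dv\gcd(d,a)}\varphi(t)$ together with the standard identity $\sum_{t\dv d}\varphi(t)/t = \prod_{p\dv d}(2-1/p) = f(d)$ (valid for squarefree $d$ coprime to $6$) and the equidistribution $|\{a\in\Set : 3^j t \dv a\}| = |\Set|/(3^j t) + O(1)$, one obtains
$$
\sum_{\substack{a\in\Set\\ 3^j\dv a}}\gcd(d,a) = \frac{|\Set|f(d)}{3^j} + O(d),
$$
so each total edge contributes $\tfrac{|\Set|f(d)}{3^j d} + O(1)$. The corner $h_1 = h_2 = 0$ evaluates via $H(d,0,0) = df(d)$ (Lemma~\ref{lem_H_f}) and the same equidistribution of $3^j\dv a$ in $\Set$ to $\tfrac{f(d)|\Set|}{3^j d} + O(f(d)/d)$. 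Summing the two total edges and subtracting one copy of the corner to avoid double counting, the non-bulk contribution is $\tfrac{f(d)|\Set|}{3^j d} + O(f(d))$ (using $f(d)\ge 1$), whence $R_n(3^j d) = \widetilde R_n(d,j) + O(f(d))$.

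The main obstacle is precisely that the Gauss-type quantities $H(d,h,0)$ along the boundary are large—of order $df(d)$—so a term-by-term estimate across the edge would lose a factor of order $|\Set|/d$ compared to what is needed. The critical step is therefore to evaluate the edge $h$-summation by orthogonality \emph{before} taking absolute values, which trades the Gauss-sum bound for an arithmetic average weighted by $\gcd(d,a)$ and thereby exploits the $|\Set|^{-1}$ normalisation built into $F_n$. Once this reduction is in place, the identity $\sum_{t\dv d}\varphi(t)/t = f(d)$ closes the estimate, and the remainder is routine bookkeeping of the three $O(1)$ errors produced along the way.
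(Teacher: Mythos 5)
Your proposal is correct and is essentially the paper's own argument: the same double additive-character expansion of $T_n(3^jd)$, identification of the bulk $0<h_1,h_2<d$ with $\widetilde{R}_n(d,j)$, and inclusion--exclusion over the two edges and the corner, with orthogonality applied before taking absolute values on the boundary (the paper's $T_{n,0}+T_{n,1}+T_{n,2}-T_{n,3}$ decomposition). The only difference is cosmetic: the paper evaluates each edge by counting $a$ (or $\mirror{a}$) in residue classes modulo $3^jd$ together with $H(d,0,0)=df(d)$, whereas you pin $u\equiv a \bmod d$, count the admissible $v$ as $\gcd(d,a)$, and finish with $\sum_{t\mid d}\varphi(t)/t=f(d)$; both routes give the same main term $\frac{f(d)\,|\Set|}{3^jd}$ and error $O(f(d))$.
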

 
\begin{proof}
  Let $j\in\{0,1\}$.  By multiplicativity, we have
  $$
    \frac{f(3^jd)}{3^jd} = \frac{f(3^j)f(d)}{3^jd} = \frac{f(d)}{3^jd},
  $$
  hence, recalling~\eqref{eq: def Rd}, we obtain
  $$
     R_n(3^jd) = T_n(3^jd) -\frac{f(3^jd)}{3^jd}\,| \Set| 
    = T_n(3^jd)  - \frac{f(d)}{3^jd}\,| \Set|.
  $$
  Since $\mirror{a} \equiv (-1)^{n-1} a \bmod 3$, recalling the
  definition~\eqref{eq:def Td} we may also write
  $$
    T_n(3^jd) = \abs{\{ a\in \Set:~d \dv a \mirror{a} \text{ and } 3^j
    \dv a\}}.
  $$
  We now filter the integers $a\in \Set$ according to the residue
  classes of $a$ and $\mirror{a}$ modulo $d$:
  $$
    T_n(3^jd) = \sum_{\substack{{0\le u,v <d}\\ {uv\equiv 0\bmod d}}} \sum_{\substack{a \in \Set\\a \equiv 0 \bmod 3^j}}
    \mathbf{1}_{a\equiv u\bmod d} \,\mathbf{1}_{\overleftarrow{a}\equiv v\bmod d},
  $$ 
  and using the orthogonality of exponential functions to control the
  conditions $a\equiv u\bmod d$ and
  $ \overleftarrow{a}\equiv v\bmod d$, we obtain
  \begin{align*}
    T_n(3^jd) &=\sum_{\substack{{0\le u,v< d}\\ {d\dv uv}}}
    \sum_{\substack{a\in  \Set\\a\equiv 0 \bmod 3^j}}
    \frac{1}{d^2}\sum_{0\le h_1,h_2 <d}
    \e{ \frac{h_1\(a-u\) + h_2\(\mirror{a}-v\)}{d}}\\ 
              &= T_{n,0}(3^jd) + T_{n,1}(3^jd) + T_{n,2}(3^jd) - T_{n,3}(3^jd)
  \end{align*}
  with
  \begin{align*}
    &  T_{n,0}(3^jd)
      =
      \sum_{\substack{{0\le u,v< d}\\ {d\dv uv}}}
    \sum_{\substack{a\in  \Set\\a\equiv 0 \bmod 3^j}}
    \frac{1}{d^2}\sum_{0< h_1,h_2 <d}
    \e{ \frac{h_1\(a-u\) + h_2\(\mirror{a}-v\)}{d}},\\
    &  T_{n,1}(3^jd)
      =
      \sum_{\substack{{0\le u,v< d}\\ {d\dv uv}}}
    \sum_{\substack{a\in  \Set\\a\equiv 0 \bmod 3^j}}
    \frac{1}{d^2}\sum_{0 \leq h_2 <d}
    \e{ \frac{h_2\(\mirror{a}-v\)}{d}},\\
    &  T_{n,2}(3^jd)
      =
      \sum_{\substack{{0\le u,v< d}\\ {d\dv uv}}}
    \sum_{\substack{a\in  \Set\\a\equiv 0 \bmod 3^j}}
    \frac{1}{d^2}\sum_{0\leq h_1<d}
    \e{ \frac{h_1\(a-u\)}{d}},\\
    &  T_{n,3}(3^jd)
      =
      \sum_{\substack{{0\le u,v< d}\\ {d\dv uv}}}
    \sum_{\substack{a\in  \Set\\a\equiv 0 \bmod 3^j}}
    \frac{1}{d^2}.
 \end{align*}
 By Lemma~\ref{lem_H_f}, we have
 $$
   \sum_{\substack{{0\le u,v< d}\\ {d\dv uv}}} 1 = H(d,0,0) = df(d).
 $$
 It follows that
 $$
   T_{n,3}(3^jd)
   =
   \frac{f(d)}{d}\sum_{\substack{a\in \Set\\a\equiv 0 \bmod 3^j}} 1
   =
   \frac{f(d)}{d} \frac{\abs{\Set}}{3^j} + O\left(\frac{f(d)}{d}\right).
 $$
 Moreover, since $a\equiv 0 \bmod 3^j$ is equivalent to
 $\mirror{a}\equiv 0 \bmod 3^j$, we have
 \begin{align*}
   T_{n,1}(3^jd)
   &=
     \sum_{\substack{{0\le u,v< d}\\ {d\dv uv}}}\frac{1}{d}
   \sum_{\substack{a\in  \Set\\a\equiv 0 \bmod 3^j}}
   \mathbf{1}_{\mirror{a} \equiv v \bmod d}
   = \sum_{\substack{{0\le u,v< d}\\ {d\dv uv}}}\frac{1}{d}
   \left(\frac{\abs{\Set}}{3^jd}+O(1)\right)\\
   &= \frac{f(d)}{d}\frac{\abs{\Set}}{3^j} +O\left(f(d)\right).
 \end{align*}
 Similarly, we obtain
 \begin{align*}
   T_{n,2}(3^jd)
   &= \frac{f(d)}{d}\frac{\abs{\Set}}{3^j} +O\left(f(d)\right).
 \end{align*}
 For $T_{n,0}(3^jd)$, we write
 \begin{align*}
   & T_{n,0}(3^jd)\\
   &\qquad = \frac{1}{d^2}\sum_{0< h_1,h_2 <d}\sum_{\substack{{0\le u,v< d}\\ {d\dv uv}}}
   \e{ \frac{-h_1u-h_2v}{d}}\sum_{\substack{a\in  \Set\\a\equiv 0 \bmod 3^j}}
   \e{\frac{h_1a+h_2\mirror{a}}{d}}\\
   &\qquad =\frac{1}{d^2}\sum_{0< h_1,h_2 <d}\overline{H(d,h_1,h_2)}
     \sum_{\substack{a\in  \Set\\a\equiv 0 \bmod 3^j}}
   \e{\frac{h_1a+h_2\mirror{a}}{d}}
 \end{align*}
 where for any $h_1,h_2$,
 \begin{align*}
   \sum_{\substack{a\in  \Set\\a\equiv 0 \bmod 3^j}}
   \e{\frac{h_1a+h_2\mirror{a}}{d}}
   &= \frac{1}{3^j} \sum_{0\leq \ell <3^j}\sum_{a\in  \Set}
     \e{\frac{h_1a+h_2\mirror{a}}{d}+\frac{\ell a}{3^j}}\\
   &= \frac{| \Set|}{3^j} \sum_{0\leq \ell <3^j}
     F_n\left(\frac{h_2}{d},-\left(\frac{h_1}{d}
     +\frac{\ell}{3^j}\right)\right)
 \end{align*}
 so that
 $$
   T_{n,0}(3^jd) = \widetilde{R}_n(d,j).
 $$
 This completes the proof.
\end{proof}

\begin{lemma}\label{to_show_R_tilde}
  For any real number $D \geq 2$, we have
  \begin{align*}
    \sum_{\substack{d < D\\d \text{ odd}}}  \mu^2(d)4^{\omega(d)}& |R_n(d)|\\
                                                                 &   \ll
                                                                   D(\log D)^7 +
                                                                   \max_{j\in \{0,1\}}
                                                                   \sum_{\substack{d < D\\\gcd(d,6)=1}} \mu^2(d)4^{\omega(d)}|\widetilde{R}_n(d,j)|.
  \end{align*}
\end{lemma}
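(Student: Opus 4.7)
The plan is to decompose the sum according to whether $3$ divides $d$ or not and then apply the previous Lemma~\ref{Rd} in each case. Specifically, every odd squarefree $d < D$ either satisfies $\gcd(d,6)=1$ (in which case we take $j=0$ in Lemma~\ref{Rd}), or $d = 3d'$ with $\gcd(d',6)=1$ (in which case we take $j=1$ applied to $d'$). Reindexing the second sum via $d=3d'$ turns the range into $d' < D/3$ and changes $4^{\omega(d)}$ into $4 \cdot 4^{\omega(d')}$. After applying Lemma~\ref{Rd}, each piece splits into a main part involving $|\widetilde R_n(\cdot,j)|$ and an error part of order $f(\cdot)$. The main parts are directly absorbed into a bound by twice the claimed maximum (up to the harmless multiplicative factor $4$), since both are subsums of $\sum_{d<D,\ \gcd(d,6)=1}\mu^2(d)4^{\omega(d)}|\widetilde R_n(d,j)|$ for the respective $j$.

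It then remains to handle the error terms, which take the shape
$$
\cE := \sum_{\substack{d<D\\d\text{ odd squarefree}}} 4^{\omega(d)}\,f(d').
$$
Using the definition~\eqref{eq:def f} of $f$, we observe that $f(p)\le 2$ for every prime $p\ne 2$ (indeed $f(3)=1$ and $f(p)=(2p-1)/p<2$ for $p\ge 5$), so by multiplicativity $f(d)\le 2^{\omega(d)}$ for any squarefree odd $d$. Hence $4^{\omega(d)}f(d) \le 8^{\omega(d)}$, and
$$
\cE \ll \sum_{d<D}\mu^2(d)\,8^{\omega(d)}.
$$
Applying Lemma~\ref{lem_maj_average_omega} with $z=8$ yields $\cE \ll D(\log D)^{7}$, which is precisely the first term on the right-hand side of the claimed bound.

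Putting the two contributions together gives the stated inequality. There is no substantive obstacle here: the lemma is essentially a packaging step that reduces a sum over all odd squarefree moduli to sums restricted to moduli coprime to $6$, so that the machinery of Sections~\ref{sec_F}--\ref{sec_H} (which was developed under the assumption $\gcd(d,6)=1$) can be brought to bear on the error term in the sieve. The only small point to be careful with is the bookkeeping in the $3\mid d$ branch, where the change of variable $d = 3d'$ produces the extra factor $4$ that must be absorbed into the implied constant before taking the maximum over $j\in\{0,1\}$.
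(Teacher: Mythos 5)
Your proposal is correct and follows essentially the same route as the paper: split according to whether $3\mid d$ (with the change of variable $d=3d'$ contributing a bounded factor), apply Lemma~\ref{Rd} to each branch, and bound the accumulated error terms via $f(d)\le 2^{\omega(d)}$ and the estimate \eqref{maj_sum_omega} with $z=8$, giving $D(\log D)^7$. The only cosmetic issue is the stray $f(d')$ inside a sum indexed by $d$, which is harmless bookkeeping.
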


\begin{proof}
  By splitting the sum over $d$ according to $\gcd(d,3)$, we obtain
  \begin{align*}
    \sum_{\substack{d < D\\\mu^2(d)=1\\d \text{ odd}}} 4^{\omega(d)}|R_n(d)|
    &= \sum_{\substack{d < D\\\mu^2(d)=1\\\gcd(d,6)=1}} 4^{\omega(d)}|R_n(d)|
    + \sum_{\substack{d < D/3\\\mu^2(d)=1\\\gcd(d,6)=1}} 4^{\omega(3d)}|R_n(3d)|\\
    &\leq
      5\max_{j\in\{0,1\}} 
      \sum_{\substack{d < D\\\mu^2(d)=1\\\gcd(d,6)=1}} 4^{\omega(d)}|R_n(3^jd)|.
  \end{align*}
  It follows from Lemma~\ref{Rd} that for $j\in\{0,1\}$,
  $$
    \sum_{\substack{d < D\\\mu^2(d)=1\\\gcd(d,6)=1}} 4^{\omega(d)}|R_n(3^jd)|
    \ll
    \sum_{\substack{d < D\\\mu^2(d)=1\\\gcd(d,6)=1}} 4^{\omega(d)}|\widetilde{R}_n(d,j)|
    + \sum_{\substack{d < D\\\mu^2(d)=1}} 4^{\omega(d)}f(d).
  $$
  Since $f(d)< 2^{\omega(d)}$ for any squarefree integer $d$ (this
  follows from the multiplicativity of $f$ and $f(p) < 2$), we have
  by~\eqref{maj_sum_omega},
  $$
    \sum_{\substack{d < D\\\mu^2(d)=1}} 4^{\omega(d)}f(d)
    \leq
    \sum_{\substack{d < D\\\mu^2(d)=1}} 8^{\omega(d)}
    \ll
    D (\log D)^7.
  $$
  Combining the previous estimates, we get
  $$
    \sum_{\substack{d < D\\\mu^2(d)=1\\d \text{ odd}}} 4^{\omega(d)}|R_n(d)|
    \ll
    D (\log D)^7 +
    \max_{j\in\{0,1\}}
    \sum_{\substack{d < D\\\mu^2(d)=1\\\gcd(d,6)=1}} 4^{\omega(d)}|\widetilde{R}_n(d,j)|,
  $$
  as desired.
\end{proof}

\section{Proof of  Lemma~\ref{lemma:sieve-error-term}}
\label{sec:proof_lemma_1}

Let $0 < \xi < 1/2$ and $D = 2^{\xi n}$.  Let $j\in \{0,1\}$
and $0 \leq \ell < 3^j$. By Lemma~\ref{to_show_R_tilde} and
\eqref{Rtilde}, it suffices to show that there exists $c>0$, which
depends only on $\xi$, such that
$$
  E(n,D,j,\ell)
  \ll_{\xi}
  \exp(-c \sqrt{n})
$$
where
\begin{align*}
  &E(n,D,j,\ell)\\
  & \qquad =
    \sum_{\substack{d \leq D\\\mu^2(d)=1\\\gcd(d,6)=1}}
  \frac{4^{\omega(d)}}{d^2}
  \sum_{0< h_1,h_2 <d}
  \abs{H(d,h_1,h_2)}
  \abs{F_n\left(\frac{h_2}{d},-\left(\frac{h_1}{d}+\frac{\ell}{3^j}\right)\right)}.
\end{align*}
In order to handle the common factors of $d$, $h_1$ and $h_2$, we
write $d=d_0d_1d_2d_3$ with suitable $d_i$. More precisely, we show
that the set of summation $\mathcal{E}(D)$ of all $(d,h_1,h_2)\in\N^3$
satisfying
\begin{gather*}
  1\leq d \leq D, \ \mu^2(d)=1,\ \gcd(d,6)=1,\\
  0< h_1 <d,\; 0< h_2 <d
\end{gather*}  
may be replaced by the set $\mathcal{F}(D)$ of all
$(d_0,d_1,d_2,d_3,h_1^*,h_2^*)\in\N^6$ satisfying
\begin{gather*}
  1\leq d_0d_1d_2d_3 \leq D, \ \mu^2(d_0d_1d_2d_3)=1,\ \gcd(d_0d_1d_2d_3,6)=1,\\
  0< h_1^*<d_2d_3, \ 0< h_2^*<d_1d_3, \ \gcd(h_1^*,d_2d_3) = 1, \
  \gcd(h_2^*,d_1d_3) = 1.
\end{gather*}
We easily check that the map
$$
  \begin{array}{clc}
    \mathcal{E}(D) &\to & \mathcal{F}(D)\\
    (d,h_1,h_2)& \mapsto & (d_0,d_1,d_2,d_3,h_1^*,h_2^*)
  \end{array}
$$
where $d_0,d_1,d_2,d_3,h_1^*,h_2^*$ are defined by
\begin{gather*}
  d_0 = \gcd(d,h_1,h_2), \  d_0d_1 = \gcd(d,h_1), \  d_0d_2 = \gcd(d,h_2),\\
  d_0d_1d_2d_3 = d,\ h_1^*d_0d_1 = h_1,\ h_2^*d_0d_2 = h_2,
\end{gather*}
is well defined and bijective with inverse
$$
  \begin{array}{clc}
    \mathcal{F}(D) &\to & \mathcal{E}(D)\\
    (d_0,d_1,d_2,d_3,h_1^*,h_2^*)& \mapsto & (d_0d_1d_2d_3, h_1^*d_0d_1, h_2^*d_0d_2).
  \end{array}
$$
It follows that
\begin{align*}
  E(n,D,j,\ell)
  & =
    \sum_{(d_0,d_1,d_2,d_3,h_1^*,h_2^*)\in \mathcal{F}(D)}
    \frac{4^{\omega(d_0d_1d_2d_3)}}{(d_0d_1d_2d_3)^2}\\
  &   \qquad \qquad  \qquad\times  \abs{H(d_0d_1d_2d_3,h_1^*d_0d_1,h_2^*d_0d_2)}\\
  & \qquad  \qquad  \qquad \qquad\times \abs{F_n\left(\frac{h_2^*}{d_1d_3},-\left(\frac{h_1^*}{d_2d_3}+\frac{\ell}{3^j}\right)\right)}.
\end{align*}
Moreover, Lemma~\ref{H} implies that for
$(d_0,d_1,d_2,d_3,h_1^*,h_2^*)\in \mathcal{F}(D)$,
$$
  \abs{H\left (d_0d_1d_2d_3,h_1^*d_0d_1,h_2^*d_0d_2\right )} \leq 2^{\omega(d_0)}d_0d_1d_2.
$$
Hence
$$
  E(n,D,j,\ell)
  \leq
  E_1(n,D,j,\ell), 
$$
where
\begin{align*}
  E_1(&n,D,j,\ell) \\
  &=
    \sum_{(d_0,d_1,d_2,d_3,h_1^*,h_2^*)\in \mathcal{F}(D)} 
    \frac{8^{\omega(d_0)} 4^{\omega(d_1d_2d_3)}}{d_0d_1d_2d_3^2}
    \abs{F_n\left(\frac{h_2^*}{d_1d_3},-\left(\frac{h_1^*}{d_2d_3}+\frac{\ell}{3^j}\right)\right)}.
\end{align*}
We write $E_1(n,D,j,\ell)=E_2(n,D,j,\ell)+E_3(n,D,j,\ell)$, where in
the sum $E_2(n,D,j,\ell)$ we have $d_1d_2d_3 \leq W$ and in the sum
$E_3(n,D,j,\ell)$, we have $d_1d_2d_3 > W$, where $W$ is a parameter
to be precised such that
$$
  2 \leq W \leq D.
$$
For $(d_0,d_1,d_2,d_3,h_1^*,h_2^*)\in \mathcal{F}(D)$ with
$d_1d_2d_3 \leq W$, by Lemma~\ref{lem_maj_Fn}, we have
$$
  \abs{F_n\left(\frac{h_2^*}{d_1d_3},-\left(\frac{h_1^*}{d_2d_3}+\frac{\ell}{3^j}\right)\right)}
  \ll
  \exp\left(\frac{-c_0\,n}{\log\left(\frac{4W}{3}\right)}\right).
$$
It follows that 
$$
  E_2(n,D,j,\ell)
  \ll
  \exp\left(\frac{-c_0\,n}{\log\left(\frac{4W}{3}\right)}\right)
  \sum_{\substack{1\leq d_0d_1d_2d_3 \leq D\\d_1d_2d_3\leq W\\\mu^2 (d_0d_1d_2d_3)=1}}
  \frac{8^{\omega(d_0)} 4^{\omega(d_1d_2d_3)}}{d_0}, 
$$
where, by Lemma~\ref{lem_maj_average_omega}, the sum in the right-hand
side is at most
\begin{align*}
  \sum_{\substack{d_1\leq W\\\mu^2(d_1)=1}} &4^{\omega(d_1)} 
                                              \sum_{\substack{d_2\leq W/d_1\\\mu^2(d_2)=1}} 4^{\omega(d_2)}
  \sum_{\substack{d_3\leq W/(d_1d_2)\\\mu^2(d_3)=1}} 4^{\omega(d_3)}
  \sum_{\substack{d_0\leq D\\\mu^2(d_0)=1}} \frac{8^{\omega(d_0)}}{d_0}\\
                                            &\qquad \ll
                                              W (\log W)^3 (\log D)^8
                                              \sum_{\substack{d_1\leq W\\\mu^2(d_1)=1}} \frac{4^{\omega(d_1)}}{d_1}
  \sum_{\substack{d_2\leq W\\\mu^2(d_2)=1}} \frac{4^{\omega(d_2)}}{d_2}\\
                                            &\qquad \ll
                                              W (\log W)^{11} (\log D)^8
                                              \ll W (\log D)^{19}.
\end{align*}
Therefore,
$$
  E_2(n,D,j,\ell)
  \ll
  W
  (\log D)^{19}
  \exp\left(\frac{-c_0\,n}{\log\left(\frac{4W}{3}\right)}\right)
  .
$$
In order to bound $E_3(n,D,j,\ell)$, we first proceed to some dyading
splitting in $d_1,d_2,d_3$ so that $d_i\in [D_i, 2D_i)$ for $i=1,2,3$
with
$$
  W/8 < D_1D_2D_3\leq D
$$ 
and we relax the conditions $\mu^2 (d_0d_1d_2d_3)=1$ and
$\gcd(d_0d_1d_2d_3,6)=1$ to keep only $\mu^2(d_0)=1$ and
$\gcd(d_1d_2d_3,6)=1$:
\begin{align*}
  &   E_3(n,D,j,\ell)\\
  & \qquad   \ll 
    (\log D)^3 \sum_{\substack{d_0\leq D\\\mu^2(d_0)=1}}
  \frac{8^{\omega(d_0)}}{d_0}
  \max_{\substack{(D_1,D_2,D_3)\in\N^3\\W/8 < D_1D_2D_3 \leq D}}
  \sum_{\substack{d_1,d_2,d_3\\d_i \sim D_i,\,
  i=1,2,3\\\gcd(d_1d_2d_3,6)=1}}
  \frac{4^{\omega(d_1d_2d_3)}}{d_1d_2d_3^2}\\
  & \qquad \qquad   \quad \times
    \sum_{\substack{0 < h_1^* < d_2d_3\\\gcd(h_1^*,d_2d_3)=1}}
  \sum_{\substack{0 < h_2^* < d_1d_3\\\gcd(h_2^*,d_1d_3)=1}}
  \abs{F_n\left(\frac{h_2^*}{d_1d_3},-\left(\frac{h_1^*}{d_2d_3}+\frac{\ell}{3^j}\right)\right)}.
\end{align*}
It follows from~\eqref{maj_sum_omega_over_n} and~\eqref{eq:tau tautilde}
that
\begin{align*}
  & E_3(n,D,j,\ell) \\
  & \qquad  \quad   \ll
    (\log D)^{11} \max_{\substack{\cD = (D_1,D_2,D_3)\in\N^3\\
  W/8 < D_1D_2D_3 \leq D}} \tautilde(8D_1D_2D_3)^2\,
  \frac{M_1(n;\cD,(0,3^{1-j}\ell))}{D_1D_2D_3^2}, 
\end{align*}
where $M_1(n;\cD,(0,3^{1-j}\ell))$ is defined
by~\eqref{notation_R_D_l}.  If $D_1D_2D_3 \leq D$ then, since
$D = 2^{\xi n}$, the condition~\eqref{eq:condD1D2D3} is satisfied
for $n\geq n_0(\xi)$ with the choice 
$$
  \varepsilon = \varepsilon(\xi)=
  \frac{1}{2}\min\left\{2,\frac{1}{2\xi} - 1\right\}\in\left(0,1\right].
$$
Thus by Lemma~\ref{lem:maj_R_D_l},
$$
  \frac{M_1(n;\cD,(0,3^{1-j}\ell))}{D_1D_2D_3^2}
   \ll \tautilde(4D_1D_3)^{1/2}\, \tautilde(4D_2D_3)^{1/2}
  \left(D_1D_2D_3\right)^{-\varepsilon \eta_0}.
$$
Since $\tautilde(d) \ll_{\xi} d^{\varepsilon \eta_0/6}$ for any positive
integer $d$ (see for instance~\cite[Theorem~315]{hardy-wright-1979})
and recalling that $D_1D_2D_3>W/8$, we obtain
$$
  E_3(n,D,j,\ell)
  \ll_{\xi}
  (\log D)^{11} W^{-\varepsilon \eta_0/2}
  .
$$
Combining the previous estimates, we get
$$
  E(n,D,j,\ell)
  \ll_{\xi}
  W
  (\log D)^{19}
  \exp\left(\frac{-c_0\,n}{\log\left(\frac{4W}{3}\right)}\right)
  +
  (\log D)^{11} W^{-\varepsilon \eta_0/2}
  .
$$
By choosing
$$
  W = \exp\left(\delta n^{1/2}\right)
  \quad
  \text{ with }
  \quad
  \delta = \delta(\xi)=
  c_0^{1/2}\left(\frac{\varepsilon(\xi)\eta_0}{2}+1\right)^{-1/2} >0
  ,
$$
we obtain
\begin{align*}
  E(n,D,j,\ell)
  &\ll_{\xi}
    n^{19}
    \exp\left(-\left(\frac{c_0}{\delta} - \delta \right)n^{1/2}\right)
    +
    n^{11} \exp\left( -\frac{\delta\varepsilon \eta_0}{2}  n^{1/2}\right)\\
  &\qquad =
    \left(n^{19} + n^{11}\right)
    \exp\left(-\widetilde{\delta} \, n^{1/2}\right)
\end{align*}
where $\widetilde{\delta} = \delta\varepsilon \eta_0 /2 >0$ depends only on $\xi$.
It follows that
$$
  E(n,D,j,\ell) \ll_{\xi} \exp\left(-\frac{\widetilde{\delta}}{2}  n^{1/2}\right),
$$
which completes the proof of Lemma~\ref{lemma:sieve-error-term}.

\section{Proofs of main results}\label{completion_proofs}

\subsection{Proof of Theorem~\ref{thm:Omega-z}}

We recall the notations introduced in Section~\ref{sec_approach}.

Let $0<\gamma < 1/(2\beta_2)$. We define
$\xi = \frac{1}{2}(\beta_2\gamma + \frac{1}{2})$ so that
$\beta_2\gamma < \xi < 1/2$ and let $z = 2^{\gamma n}$ and
$y = 2^{\xi n}\geq z$.

By Lemma~\ref{lemma:sieve-error-term}, there exists $c>0$, which
depends only on $\gamma$, such that
$$
  \sum_{\substack{d \dv P(z)\\d<y}}
  4^{\omega(d)}|R_n(d)|
  \leq
  \sum_{\substack{d < y\\d \text{ odd}}}
  \mu^2(d)  4^{\omega(d)}|R_n(d)|
  \ll_{\gamma} 2^n \exp(-c\sqrt{n}).
$$
Since
$h^+\left(\frac{\log y}{\log
    z}\right)=h^+\left(\frac{\xi}{\gamma}\right) > 0$, it follows
from~\eqref{eq:Sieve-UB} that
$$
  \varTheta\left(n,z\right)
  \ll_{\gamma} \frac{2^{n}}{n^2}.
$$
Moreover, since $\beta_2\gamma < \xi$, we have
$h^-\left(\frac{\log y}{\log
    z}\right)=h^-\left(\frac{\xi}{\gamma}\right) > 0$ and it
follows from~\eqref{eq:Sieve-LB} that there exists $n_0 = n_0(\gamma)$
such that for $n\geq n_0$, we have
$$
  \varTheta\left(n,z\right)
  \gg_{\gamma} \frac{2^{n}}{n^2}.
$$
This completes the proof of Theorem~\ref{thm:Omega-z}.

\subsection{Proof of Theorem~\ref{thm-squarefree-v2}}
We first detect the condition that $\mirror{a}$ is squarefree:
$$
  \SQF(n)=\sum_{a\in \Set }\mu^2 (a)\sum_{d_1^2|\mirror{a}}\mu (d_1).
$$
Let $D_1\le 2^{n/2}$ be a parameter to be precised. We split the sum
in $\SQF(n) = S_{11}+S_{12}$, where $d_1\le D_1$ in $S_{11}$ and
$d_1>D_1$ in $S_{12}$.  For the sum $S_{12}$, we forget the condition
``$a$ is squarefree'', and reverse the roles between $a$ and
$\mirror{a}$:
\begin{align*}
  \abs{S_{12}} & = \abs{\sum_{D_1<d_1< 2^{n/2}}\mu (d_1)
                 \sum_{\substack{{a\in \Set }\\ {d_1^2 |\mirror{a}}}}\mu^2 (a)}\\
               &  \le \sum_{D_1<d_1<2^{n/2}}\sum_{\substack{{a\in \Set }\\ {d_1^2|a}}}1
  \ll \sum_{d_1>D_1}\frac{|\Set |}{d_1^2}\ll \frac{|\Set |}{D_1}.
\end{align*}
We now detect in $S_{11}$ the condition $\mu^2 (a)=1$:
$$
  S_{11}
  =\sum_{d_1\le D_1}\mu (d_1)
  \sum_{\substack{{a\in \Set }\\ {d_1^2 |\mirror{a}}}}\mu^2 (a)
  =\sum_{d_1\le D_1}\mu (d_1)\sum_{\substack{{a\in \Set }\\ {d_1^2
        |\mirror{a}}}}\sum_{d_2^2|a}\mu (d_2).
$$
We introduce a parameter $D_2$ such that $D_1 \leq D_2\le 2^{n/2}$ and
split $S_{11}$ in $S_{11} = S_{21}+S_{22}$ where $d_2\le D_2$ in
$S_{21}$ and $d_2>D_2$ in $S_{22}$.  We bound trivially $S_{22}$:
$$
  \abs{S_{22}}
  \le D_1\sum_{d_2 >D_2}\sum_{\substack{{a\in \Set }\\ {d_2^2 |a}}}1
  \ll |\Set |\, \frac{D_1}{D_2}.
$$
It remains to estimate
$$
  S_{21}
  =\sum_{\substack{{d_1\le D_1}\\ {d_2\le D_2}\\ {\gcd(d_1d_2,2)=1}}}\mu(d_1)\mu(d_2)
  \sum_{\substack{a\in \Set \\ {d_1^2 \dv \mirror{a}}\\ {d_2^2 \dv a}}}1.
$$
Since the inner sum above is for $a\in \Set $, it is licit to add the
condition $\gcd(d_1d_2,2)=1$.  We detect the conditions
$d_1^2 \dv \mirror{a}$ and $d_2^2 \dv a$ via exponential sums.  The
sum $S_{21}$ becomes:
\begin{align*}
  S_{21} &=\sum_{\substack{{d_1\le D_1}\\ {d_2\le D_2}\\
  {\gcd(d_1d_2,2)=1}}} \frac{\mu(d_1)\mu(d_2)}{d_1^2d_2^2}\sum_{a\in
  \Set } \sum_{0\leq h_1<d_1^2}\sum_{0\leq h_2<d_2^2}
  \e{\frac{h_1\mirror{a}}{d_1^2}+\frac{h_2a}{d_2^2}}\\
         &=|\Set | \sum_{\substack{{d_1\le D_1}\\ {d_2\le D_2}\\
  {\gcd(d_1d_2,2)=1}}}
  \frac{\mu(d_1)\mu(d_2)}{d_1^2d_2^2}\sum_{0\leq
  h_1<d_1^2}\sum_{0\leq h_2<d_2^2}
  F_n\left(\frac{h_1}{d_1^2},\frac{-h_2}{d_2^2}\right).
\end{align*}
We now split this sum in $S_{21} = S_{31} + S_{32}$ where, in
$S_{31}$, $d_1^2 \notdv 3h_1$ or $d_2^2 \notdv 3h_2$ and in $S_{32}$,
$d_1^2 \dv 3h_1$ and $d_2^2 \dv 3h_2$.  By Lemma~\ref{lem_maj_Fn}, we
have
\begin{align*}
  \abs{S_{31}}
  &\leq  |\Set | D_1D_2
    \max_{\substack{(d_1,d_2,h_1,h_2)\\
  d_i \leq D_i, 0\leq h_i < d_i^2, i=1,2\\ \gcd(d_1d_2,2)=1
  \\d_1^2 \notdv 3h_1 \text{ or } d_2^2 \notdv 3h_2}}
  \abs{F_n\left(\frac{h_1}{d_1^2},\frac{-h_2}{d_2^2}\right)}\\
  &\ll |\Set | D_1D_2 \exp\left(\frac{-c_0n}{\log(\frac{4D_2^2}{3})}\right).
\end{align*}
It remains to estimate $S_{32}$. We split the sum according to the
value of
$\left(\frac{3h_1}{d_1^2},\frac{3h_2}{d_2^2}\right) \in \{0,1,2\}^2$:
$$
  S_{32}
  = |\Set | \sum_{0\leq u_1,u_2 \leq 2} F_n\left(\frac{u_1}{3},\frac{-u_2}{3}\right)
    \prod_{i=1}^2\left(\sum_{\substack{d_i \leq D_i\\\gcd(d_i,2)=1,\ 3 \dv u_id_i^2}}
  \frac{\mu(d_i)}{d_i^2}\right).
$$
For $i\in \{1,2\}$ and $u_i \in \{0,1,2\}$, we easily check that
$$
  \sum_{\substack{d_i \leq D_i\\\gcd(d_i,2)=1,\ 3 \dv u_id_i^2}}
  \frac{\mu(d_i)}{d_i^2} = \frac{c(u_i)}{\zeta(2)} +O\left(\frac{1}{D_i}\right), 
$$
where $c(u_i)$ is defined by $c(0) = 4/3$ and $c(1) = c(2) = -1/6$.
It follows that
\begin{align*}
  S_{32}
  &= |\Set | \sum_{0\leq u_1,u_2 \leq 2}
    F_n\left(\frac{u_1}{3},\frac{-u_2}{3}\right)
    \left(\frac{c(u_1)c(u_2)}{\zeta(2)^2}
    + O\left(\frac{1}{D_1}\right)\right)
  \\
  &= \frac{S_{41}}{\zeta(2)^2}
    + O\left(\frac{|\Set |}{D_1}\right)
\end{align*}
where $S_{41}$ is defined by
$$
  S_{41} = |\Set |\sum_{0\leq u_1,u_2 \leq 2}
    F_n\left(\frac{u_1}{3},\frac{-u_2}{3}\right)c(u_1)c(u_2).
$$
To estimate the sum over $(u_1,u_2)$, we use again exponential sums:
\begin{align*}
  S_{41}
  &= \sum_{0\leq u_1,u_2 \leq 2}c(u_1)c(u_2)\sum_{a \in \Set }
    \e{\frac{u_1\mirror{a}}{3}+\frac{u_2a}{3}}\\
  &= \sum_{a\in \Set }
    \left(\sum_{0\leq u_1 \leq 2} c(u_1)\e{\frac{u_1\mirror{a}}{3}}\right)
    \left(\sum_{0\leq u_2 \leq 2} c(u_2)\e{\frac{u_2a}{3}}\right).
\end{align*}
For the inner sum over $u_1$, we write
\begin{align*}
  \sum_{0\leq u_1 \leq 2} c(u_1)\e{\frac{u_1\mirror{a}}{3}}
  &= c(0) + c(1)\left(\e{\frac{\mirror{a}}{3}}
    + \e{\frac{2\mirror{a}}{3}}\right)\\
  &=  c(0) + c(1) \left( 3 \cdot\mathbf{1}_{3 \dv \mirror{a}}-1\right)\\
  &= \frac{1}{2} \left(3 -\mathbf{1}_{3 \dv \mirror{a}}\right)
\end{align*}
and similarly for the sum over $u_2$. Since
$$
  \mirror{a} \equiv (-1)^{n-1} a \bmod 3,
$$
it follows that
$$
  S_{41}
  = \frac{1}{4}\sum_{a\in \Set } \left(3 -\mathbf{1}_{3 \dv a}\right)^2
  = \frac{1}{4}\left(9|\Set | - 5 \sum_{\substack{a \in \Set \\3 \dv a}} 1\right)
  = \frac{11}{6} \, |\Set | + O(1).
$$
Combining the previous estimates, we finally obtain
$$
  \SQF(n)
  = |\Set | \left(
    \frac{11}{6\zeta(2)^2}
    + O\left(
      \frac{D_1}{D_2}
      + \frac{1}{D_1}
      + D_1D_2
      \exp\left(\frac{-c_0n}{\log(\frac{4D_2^2}{3})}\right)
      \right)
  \right).
$$
By choosing $D_2 = D_1^2 = \exp\left( \frac{1}{2} c_0^{1/2}  n^{1/2}\right)$,
we get
$$
  \SQF(n)
  = |\Set | \left(
    \frac{11}{6\zeta(2)^2} +O\left(\exp\left(-c n^{1/2}\right)\right)
  \right)
$$
with $c = \frac{1}{4} c_0^{1/2}= 0.0439...$, which
proves~\eqref{asymp_squarefree_Bn}.

If $b$ is an $(n-1)$-bit integer then $2b$ is an $n$-bit integer and
$\mirror{2b}= \mirror{b}$.  It follows that the number of even
integers $a$ such that $2^{n-1}\leq a < 2^n$ and
$\mu ^2(a)=\mu^2(\mirror{a})=1$ is
$$
  \abs{\{2^{n-2}\leq b < 2^{n-1} :~\mu ^2(2b)=\mu^2(\mirror{b})=1\}} = \SQF(n-1).
$$
Since $|\Set| = 2^{n-2}$, this implies that
$$
    \widetilde \SQF(n) = \SQF(n) + \SQF(n-1)
    =2^{n-1}\left(\frac{11}{8\zeta(2)^2} + O(\exp(-c n^{1/2}))\right), 
$$
which proves~\eqref{asymp_squarefree}.

\section{Numerical investigations on the number of reversible
  primes}\label{sec_count_emirps}

\subsection{Preambule} 
In this section, we provide numerical investigations on reversible
primes. In fact, we do not restrict ourselves to base $2$ and provide
also some numerical data in base $10$.

\subsection{Base 2}

We recall that~\cite[\href{http://oeis.org/A074831}{A074831}]{OEIS}
provides a table of the number of binary reversible primes less than
$10^m$ for $m \leq 12$. We think more useful to provide a table of the
number $\varTheta(n)$ of binary reversible primes with exactly $n$
binary digits and we do so for $n \leq 50$ in Table~\ref{table_2}.

We have written a program that combines a classical Eratosthenes sieve
(optimized using the library \texttt{primesieve}) with a variant of
Eratosthenes sieve in residue classes.  This permits to organize the
computations by blocks so that the tables fit into the computer memory
and produce a considerable speedup due to a strong use of the
\texttt{L1} memory cache.

\begin{table}[h!]
  \makebox[\linewidth]{
{\tiny \begin{tabular}{|c|c||c|c||c|c||c|c||c|c|}
      \hline
      $n$ & $\varTheta(n)$ & $n$ & $\varTheta(n)$ & $n$ & $\varTheta(n)$ & $n$ & $\varTheta(n)$ & $n$ & $\varTheta(n)$ \\
      \hline
      1   & 0                & 11  & 69               & 21  & 16732            & 31  & 7377931          & 41  & 4222570054       \\
      \hline
      2   & 1                & 12  & 94               & 22  & 29392            & 32  & 13878622         & 42  & 8056984176       \\ 
      \hline
      3   & 2                & 13  & 178              & 23  & 55109            & 33  & 25958590         & 43  & 15315267089      \\ 
      \hline
      4   & 2                & 14  & 308              & 24  & 101120           & 34  & 48421044         & 44  & 29274821854      \\ 
      \hline
      5   & 4                & 15  & 589              & 25  & 179654           & 35  & 92163237         & 45  & 55976669028      \\ 
      \hline
      6   & 6                & 16  & 908              & 26  & 332130           & 36  & 173672988        & 46  & 106505783902     \\ 
      \hline
      7   & 9                & 17  & 1540             & 27  & 625928           & 37  & 325098134        & 47  & 204628057694     \\ 
      \hline
      8   & 14               & 18  & 2814             & 28  & 1136814          & 38  & 617741968        & 48  & 392422557460     \\ 
      \hline
      9   & 27               & 19  & 5158             & 29  & 2120399          & 39  & 1177573074       & 49  & 749026893680     \\ 
      \hline
      10  & 36               & 20  & 9210             & 30  & 3963166          & 40  & 2221353224       & 50  & 1440435348050    \\
      \hline
    \end{tabular}
  } 
  }
  \caption{Base 2: values of $\varTheta(n)$ for $n\leq 50$.}
  \label{table_2}
\end{table}

A table of the number of binary prime palindromes with $n$ binary
digits is given in~\cite[\href{http://oeis.org/A117773}{A117773}]{OEIS}.
Our calculations give us the opportunity to confirm these values for
all $n\leq 50$.

Let us describe a heuristic argument that leads us to a conjecture on
the asymptotic behaviour of $\varTheta(n)$.

Let $a$ be a randomly chosen integer in $\Set$. If $a$ is prime then
$3 \nmid a$, which is equivalent to $3 \nmid \mirror{a}$
by~\eqref{congr_mod_3}.  Therefore the events ``$a$ is prime'' and
``$\mirror{a}$ is prime'' are not expected to be ``independent" but it
is natural to expect that they are ``conditionally independent'' given
that $3 \nmid a$. This would imply that
\begin{align*}
  &   \mathbb{P}\left(a \text{ and }\mirror{a} \text{ are prime}\right)\\
  &\qquad\qquad\qquad  = \mathbb{P}\left(a \text{ and }\mirror{a} \text{ are prime}\,|\, 3 \nmid a\right)\mathbb{P}\left(3 \nmid a\right)\\
  &\qquad\qquad\qquad\approx
    \mathbb{P}\left(a \text{ is prime}\,|\, 3 \nmid a\right)
    \mathbb{P}\left(\mirror{a} \text{ is prime}\,|\, 3 \nmid a\right)
    \mathbb{P}\left(3 \nmid a\right)\\
  &\qquad\qquad\qquad\quad=
    \frac{\mathbb{P}(a \text{ is prime})\,
    \mathbb{P}(\mirror{a} \text{ is prime})}
    {\mathbb{P}\left(3 \nmid a\right)}.
\end{align*}
Moreover
$\mathbb{P}(a \text{ is prime}) = \mathbb{P}(\mirror{a} \text{ is
  prime})$
and since
$a\in\Set$, by the Prime Number Theorem, we have
$$
  \mathbb{P}(a \text{ is prime}) 
   =\frac{\Li(2^n)-\Li(2^{n-1})}{|\Set|} (1+o(1))\qquad (n \to \infty), 
$$
where 
$$
\Li(x) = \int_2^x \frac{d t}{\log t}.
$$
Thus we may expect that
$$
  \varTheta(n)
  = |\Set|\,
  \mathbb{P}\left(a \text{ and }\mirror{a} \text{ are prime}\right)
  \approx
  \varTheta_{\text{exp}}(n)
$$
where
$$
\varTheta_{\text{exp}}(n)
=  \frac{3\left(\Li(2^n)-\Li(2^{n-1})\right)^2}{2^{n-1}}
= (3+o(1))\frac{2^{n-1}}{(\log 2^n)^2}\qquad (n \to \infty)
.
$$

This agrees with the values of $\varTheta(n)$ provided in
Table~\ref{table_2}, as illustrated graphically by Figure~\ref{plot}.

\begin{figure}[h!]
  \centering \includegraphics[width=9cm]{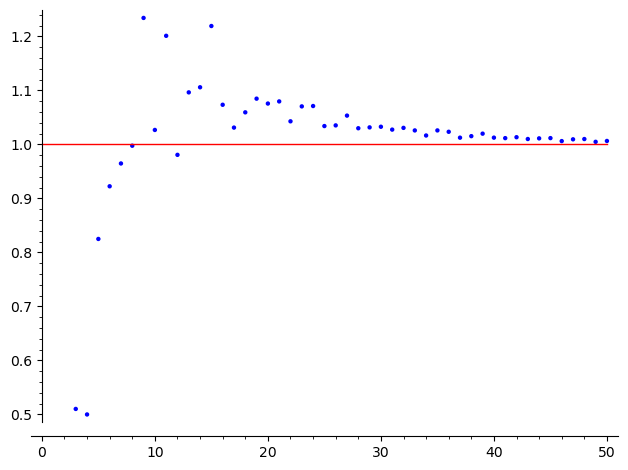}
  \caption{Base 2: graph of $\varTheta(n)/\varTheta_{\text{exp}}(n)$ for $n\leq 50$.}
  \label{plot}
\end{figure}  
This leads us to formulate the following. 

\begin{conjecture}
  $$
  \varTheta(n) =(3+o(1)) \frac{2^{n-1}}{\left(\log 2^n\right)^2}
  \qquad (n \to \infty).
  $$
\end{conjecture}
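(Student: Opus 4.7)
The Conjecture is a two-variable simultaneous-primality statement of essentially the same difficulty as the twin prime conjecture, so a full proof is beyond current methods. Nonetheless, the framework of Sections~2--5 suggests a natural circle-method strategy whose only missing ingredient is a ``Type~II'' bilinear bound for the reversal exponential sum $F_n$.

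The plan is to attack the weighted sum
$$
\sum_{a\in\Set}\Lambda(a)\Lambda(\mirror a),
$$
to show that it equals $(3+o(1))\cdot 2^{n-1}$, and then deduce the Conjecture by partial summation. Applying a Heath--Brown or Vaughan identity to $\Lambda$ in each coordinate reduces matters to bilinear sums of the shape
$$
\sum_{m\sim M}\sum_{k\sim K}\alpha_m\beta_k\,\mathbf{1}_{mk\in\Set}\,g(\mirror{mk}),
$$
where $g$ is a smooth analogue of $\Lambda$ or of the characteristic function of primes. Detecting the value of $\mirror{mk}$ via additive characters then rewrites every such piece as an average of $F_n(h_2/d,-(h_1/d+\ell/3^j))$ of exactly the type already analysed in Section~5.

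The major-arc contribution (parameters rational with denominator up to $2^{(1/2-\xi)n}$) is essentially handled by the present work: Lemma~\ref{lemma:sieve-error-term} delivers a Bombieri--Vinogradov-style equidistribution of $a\mirror a$ in residue classes modulo squarefree $d$ with saving $2^n\exp(-c\sqrt n)$, and a routine singular-series calculation supplies the main term. The only nontrivial local factor appears at $p=3$, where the identity $3\mid a \Leftrightarrow 3\mid\mirror a$ forces a correction $3/2$ relative to the independent model; combined with the restriction to $\Set$ (which contributes a factor $2$ absorbed into the $2^{n-1}$ normalisation), this yields the constant $3$ predicted by the Conjecture.

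The hard part will be the minor arcs: one must control bilinear sums of $F_n(\alpha,\vartheta+h/(mk))$ when $\vartheta$ is only well-approximable by rationals of denominator as large as $2^{(1-\varepsilon)n}$. Lemma~\ref{lem_maj_Fn} provides a saving of $\exp(-cn/\log d)$, which degenerates to a mere power of $n$ as $d$ approaches $2^n$; any proof must therefore exploit cancellation in the bilinear structure, typically by a van~der~Corput or $A$-process iteration built on the product formula of Lemma~\ref{FT_product}. The serious obstruction is that the reversal map couples the two frequencies: a single bit $\varepsilon_j(a)$ feeds simultaneously the low end of $a$ and the high end of $\mirror a$, so the Cauchy--Schwarz-and-shift manipulations that succeed for the sum-of-digits problem of Mauduit--Rivat entangle $\alpha$ and $\vartheta$ in a way which, to our knowledge, has not been overcome.
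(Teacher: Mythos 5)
You have correctly read this statement for what it is: a conjecture. The paper does not prove it either --- its entire support consists of the probabilistic heuristic of Section~\ref{sec_count_emirps} (conditional independence of ``$a$ prime'' and ``$\mirror{a}$ prime'' given $3\nmid a$, which corrects the independent model by the factor $1/\mathbb{P}(3\nmid a)=3/2$ and, via the Prime Number Theorem in the form $\Li(2^n)-\Li(2^{n-1})$, produces $\varTheta_{\mathrm{exp}}(n)=(3+o(1))\,2^{n-1}/(\log 2^n)^2$) together with the numerical data of Table~\ref{table_2}. Your accounting of the constant --- the local correction $3/2$ at the prime $3$ coming from $3\mid a\Leftrightarrow 3\mid\mirror{a}$, and the normalisation by $\abs{\Set}=2^{n-2}$ --- matches the paper's heuristic, merely phrased as a singular-series computation instead of the paper's shorter conditional-independence argument.

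Your proposed circle-method/Vaughan-identity programme is a sensible road map but, as you say yourself, it is not a proof, and the obstruction you name is the genuine one. The paper's machinery controls the remainders $R_n(d)$ only for moduli $d<2^{\xi n}$ with $\xi<1/2$ (Lemma~\ref{lemma:sieve-error-term}); this level of distribution suffices for the two-dimensional sieve, hence for the upper bound $\varTheta(n)\ll 2^n/n^2$ and the almost-prime lower bound, but not for an asymptotic formula for $\sum_{a\in\Set}\Lambda(a)\Lambda(\mirror{a})$. After a combinatorial decomposition of $\Lambda$, the Type~II ranges force one to evaluate $F_n$ at rationals with denominators up to $2^{(1-\varepsilon)n}$, where the individual bound of Lemma~\ref{lem_maj_Fn} saves only $\exp(-c_0 n/\log d)$ and the averaged bounds (Lemmas~\ref{lem_maj__R_D_l_squared} and~\ref{lem:maj_R_D_l}) likewise stop well short; and the reversal map couples the two frequencies in the product formula of Lemma~\ref{FT_product} exactly as you describe, so the Mauduit--Rivat-style Cauchy--Schwarz and shift manipulations do not decouple them. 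In short: your submission is a correct assessment of why the statement remains open rather than a proof, and it is consistent with --- though more elaborate than --- the heuristic the paper itself offers; one mild overstatement is your claim that the major arcs are ``essentially handled by the present work'', since Lemma~\ref{lemma:sieve-error-term} concerns divisibility of the product $a\mirror{a}$ for the sieve rather than the joint distribution of $(a,\mirror{a})$ weighted by $\Lambda\otimes\Lambda$, although the same $F_n$-estimates would plausibly cover that step for small moduli.
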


\subsection{Base 10}

In this section, we consider $n$-digits integers $k$ in base 10 such
that
$$
  k = \sum_{j=0}^{n-1} \varepsilon_{j}(k) \,10^j,
  \quad
  \mirror{k} = \sum_{j=0}^{n-1} \eps_j(k)\,10^{n-1-j}
$$
where $\varepsilon_j(k) \in \{0,1,\ldots,10\}$,
$j\in\{0, \ldots, n-1\}$, $ \varepsilon_{n-1}(k)\neq 0$.  We define
$$
  \varTheta_{10}(n)
  =
  \abs{\left\{10^{n-1} \leq p < 10^n : ~p \text{ and } \mirror{p} \text{ are prime} \right\}}.
$$

We note that~\cite[\href{http://oeis.org/A048054}{A048054}]{OEIS}
provides a table of the number $\varTheta_{10}(n)$ of base 10
reversible primes with exactly $n$ digits in base 10 for $n \leq
13$. We extend this to $n\leq 15$ in Table~\ref{table_10}.

\begin{table}[h!]
  \begin{center}
  {\tiny   \begin{tabular}{|c|c||c|c||c|c|}
      \hline
      $n$ & $\varTheta_{10}(n)$ & $n$ & $\varTheta_{10}(n)$ & $n$ & $\varTheta_{10}(n)$ \\
      \hline 
      1   & 4                   & 6   & 9538                & 11  & 274932272           \\
      \hline
      2   & 9                   & 7   & 71142               & 12  & 2294771254          \\
      \hline
      3   & 43                  & 8   & 535578              & 13  & 19489886063         \\
      \hline
      4   & 204                 & 9   & 4197196             & 14  & 167630912672        \\
      \hline
      5   & 1499                & 10  & 33619380            & 15  & 1456476399463       \\
      \hline
    \end{tabular}
    }
  \end{center}
  \caption{Base 10: values of $\varTheta_{10}(n)$ for $n\leq 15$.}
  \label{table_10}
\end{table}

Furthermore,~\cite[\href{http://oeis.org/A016115}{A016115}]{OEIS}  
provides a table of the number of base 10 prime palindromes with $n$
digits.  Our calculations give us the opportunity to confirm these
values for all $n\leq 15$.

\section*{Acknowledgement}

This work was motivated by conversations with Pieter Moree, who we
would like to thank.  The authors also express their gratitude to
their late colleague and friend Christian Mauduit with who several possible
approaches to this project were discussed in 2014 in Luminy.

During the preparation of this work C.D., B.M. and J.R. were supported
by ANR-FWF Grant 4945-N and ANR Grant 20-CE91-0006 and I.S.  by the
ARC Grants DP230100530 and DP230100534.

\end{document}